\documentclass[11pt,reqno]{amsart}
\usepackage{amsmath,amsxtra,latexsym,amsthm,amssymb,amscd,pb-diagram}
\usepackage{mathrsfs,mathabx,amsfonts}
\usepackage[colorlinks=true,linkcolor=red,citecolor=blue]{hyperref}
\usepackage[margin=1in]{geometry}

\usepackage{bm}
\usepackage{color}
\usepackage{makecell,multirow,diagbox}
\usepackage{mathtools}
\usepackage[displaymath, mathlines]{lineno}
\usepackage{tikz}
\usepackage{pgfplots}
\usepackage{graphicx}

\pgfplotsset{compat=1.18}

\usepackage{graphicx}
\usepackage{epsfig} 
\usepackage{array}
\usepackage{enumitem}
\DeclareMathOperator{\supp}{supp}
\newtheorem{definition}{Definition}[section]
\newtheorem{theorem}{Theorem}[section]
\newtheorem{proposition}{Proposition}[section]
\newtheorem{lemma}{Lemma}[section]

\newtheorem{remark}{Remark}[section]

\newcommand{\ity}{\infty}

\newcommand{\tron}[1]{\left(#1\right)}

\begin{document} 
	\title[The damped wave system with mixed product-type nonlinearities]{Critical thresholds for the damped wave system with mixed product-type nonlinearities}
	\subjclass{26A15, 35A01, 35B33, 35L52}
	\keywords{Damped wave equation; Coupled system; Global existence; Blow-up; Critical curve}
	\maketitle
	\centerline{\scshape \textbf{Trung Loc Tang}}
	{\footnotesize
		\centerline{Deparment of Mathematics, Thang Long University}
		\centerline{Nghiem Xuan Yem, Hanoi, Vietnam}
		\centerline{Email: loctt@thanglong.edu.vn}}
	
	\centerline{\scshape \textbf{Tuan Anh Dao}}
	{\footnotesize
		\centerline{Faculty of Mathematics and Informatics, Hanoi University of Science and Technology}
		\centerline{No.1 Dai Co Viet road, Hanoi, Vietnam}
		\centerline{Email: anh.daotuan@hust.edu.vn}}
	
	\centerline{\scshape \textbf{The Anh Cung}}
	{\footnotesize
		\centerline{School of Mathematics and Computer Science, Hanoi National University of Education}
		\centerline{136 Xuan Thuy, Hanoi, Vietnam}
		\centerline{Email: anhctmath@hnue.edu.vn}}
	
	\begin{abstract}
		In this paper, we study the Cauchy problem for a coupled system of damped wave equations with mixed product-type nonlinearities. We have succeeded in determining the critical curve, formulated in terms of the power exponents of nonlinearities, for the global (in time) existence of small data mild solutions and the nonexistence of global mild solutions. Moreover, the global existence result is established by means of Schauder's fixed point theorem and Banach's contraction principle, whereas the nonexistence analysis relies on a new iteration scheme adapted to the product-type structure of nonlinearities. To the best of our knowledge, this work provides the first critical threshold results for such systems.
	\end{abstract}
	
	\tableofcontents
	
	\section{Introduction}\label{sec1}
	
	First, let us begin with the Cauchy problem for semi-linear damped wave equations as follows:
	\begin{equation}\label{damped_wave_system}
		\left\{
		\begin{aligned}
			&u_{tt}-\Delta u+u_t=|u|^p, && x\in\mathbb{R}^n, t>0,\\
			&(u,u_t)(0,x)=(u_0,u_1)(x),&&x\in \mathbb{R}^n.
		\end{aligned}
		\right.
	\end{equation}
	This equation has been extensively studied as a model for dissipative hyperbolic equations with nonlinear source terms. The presence of the damping term $u_t$ produces a strong dissipative effect, leading to a remarkable interplay between the hyperbolic propagation and the parabolic diffusion. Consequently, many interesting results for solutions to \eqref{damped_wave_system} and its corresponding equation including decay estimates, asymptotic profile, global existence, finite time blow-up and lifespan have been investigated, for example, in \cite{Mat1976,Ikehata2002,Ikehata2004,Radu2011,IkeOga2016,Ikehata2019}.  Among these contributions, the authors in \cite{Todorova2001} established that, in the $L^1$ setting of the initial data, the critical exponent is $p_{\mathrm{Fuj}}:=1+2/n$. This value play a role as the threshold between the global existence of small data solutions and finite time blow-up, even for arbitrarily small nontrivial initial data. In particular, it exactly coincides with the Fujita exponent for the semi-linear heat equation to highlight the diffusion phenomenon for the damped wave equation. As we know, the single equation naturally motivates to study of weakly coupled systems in which different components interact through nonlinear source terms. In this setting, the competition between the two nonlinearities gives a more complicated threshold structure, which is usually described by a critical curve rather than a single critical exponent. This result was investigated in \cite{NishiharaWakasugi,Sun2007,Takeda2009}, where the authors studied the following weakly coupled system of semi-linear damped wave equations:
	\begin{equation}\label{System_weakly_coupled_damped_wave}
		\left\{
		\begin{aligned}
			&u_{tt}-\Delta u+u_t=|v|^p,&&x\in \mathbb{R}^n,t>0,\\
			&v_{tt}-\Delta v+v_t=|u|^{q},&&x\in \mathbb{R}^n,t>0,\\
			&(u,u_t,v,v_t)(0,x)=(u_0,u_1,v_0,v_1)(x),&&x\in \mathbb{R}^n.
		\end{aligned}
		\right.
	\end{equation}
	Under suitable regularity assumptions, they established the global (in time) existence of solutions for sufficiently small initial data in an $L^1$-based energy space, provided that
	\[
	\Gamma(p,q):=\frac{\max\{p,q\}+1}{pq-1}<\frac{n}{2}.
	\]
	Conversely, under appropriate positivity assumptions on the initial data, finite time blow-up occurs when $\Gamma(p,q)\geq n/2$. Therefore, we can say from this observation that the critical curve for \eqref{System_weakly_coupled_damped_wave} is
	\begin{equation}
		\label{critical_curve_weakly_coupled}
		\Gamma(p,q)=n/2.
	\end{equation}
	Very recently, the authors in \cite{Dabbicco2025} considered the following $2\times 2$ system of $\sigma$-evolution equations with additive coupling nonlinearities:
	\begin{equation}\label{System_weakly_coupled_damped_sigma}
		\left\{
		\begin{aligned}
			&u_{tt}+(-\Delta)^{\sigma}u+(-\Delta)^{\theta_1}u_t= c_1|u|^{\alpha}+c_2|v|^{p},&&x\in \mathbb R^n,t>0,\\
			&v_{tt}+(-\Delta)^{\sigma}v+(-\Delta)^{\theta_2}v_t= c_3|u|^{q}+c_4|v|^{\beta},&&x\in \mathbb R^n,\\
			&(u,u_t,v,v_t)(0,x)=(0,u_1,0,v_1)(x),&&x\in \mathbb{R}^n,
		\end{aligned}
		\right.
	\end{equation}
	where $\sigma<n<2\sigma$, $\theta_1,\theta_2\in [\sigma/2,\sigma]$ and some positive constants $c_k$ with $k\in\{1,2,3,4\}$. They have demonstrated the global (in time) existence of small data solutions under suitable conditions describing the interplay among the four nonlinear powers. More precisely, there appear two distinct critical curves including the first one similar to \eqref{critical_curve_weakly_coupled} and the genuinely new another arising from the interaction between $\alpha,\beta$ and $p,q$. Without loss of generality, if $q\leq q_{\rm c}:=(n+\sigma)/(n-\sigma)$, then these critical curves are given by
	\[
	p=\frac{n+\sigma}{q(n-\sigma)-2\sigma}\quad\text{and}\quad\beta=\frac{q(n-\sigma)}{p(n-\sigma)-2\sigma}.
	\]
	
	Motivated by those above results, in the present paper we study the following system with mixed product-type nonlinearities:
	\begin{equation}\label{Main_system}
		\left\{
		\begin{aligned}
			&u_{tt}-\Delta u+u_t=|u|^{\alpha}|v|^p,&&x\in \mathbb{R}^n,t>0,\\
			&v_{tt}-\Delta v+v_t=|u|^{q}|v|^{\beta},&&x\in \mathbb{R}^n,t>0,\\
			&(u,u_t,v,v_t)(0,x)=(u_0,u_1,v_0,v_1)(x),&&x\in \mathbb{R}^n,
		\end{aligned}
		\right.
	\end{equation}
	where $p,q>0$ and $\alpha,\beta \ge 0$. Two nonlinear terms $|u|^{\alpha}$ and $|v|^{\beta}$ can be interpreted as the nonlinearity pertubations of each equation of \eqref{Main_system} in a comparison with the corresponding equation of \eqref{System_weakly_coupled_damped_wave}. From this essential difference, one can say that \eqref{System_weakly_coupled_damped_wave} and \eqref{System_weakly_coupled_damped_sigma} are well-known as weakly coupled systems, meanwhile, the main problem \eqref{Main_system} of our work can be considerd as a strongly coupled system. The present paper provides a unified analysis of \eqref{Main_system} in two different range of the exponent of nonlinearity pertubations, namely, $\alpha,\beta\in[0,1)$ and $\alpha,\beta\in[1,\infty)$. The point worth noticing is that these two range also exhibit different critical mechanisms. More precisely, in the former case the nonlinearity pertubations $|u|^{\alpha}$ and $|v|^{\beta}$ lead to a new critical curve in comparison with \eqref{critical_curve_weakly_coupled}, whereas the critical threshold in the latter case relates to the Fujita exponent instead, governed by the total exponent of nonlinearities. The main difficulty of this work comes from the structure of nonlinearities, which are only H\"older continuous at the origin if the exponents $p,q,\alpha,\beta<1$, and the associated nonlinear mapping is generally not locally Lipschitz. This means that the usual contraction argument cannot be directly applied in this case. To overcome this difficulty, the key point is based on employing Schauder's fixed point theorem (see more \cite{DrabekMilota,Evan}). The required compactness is obtained by combining the finite propagation speed property of solutions with compact Sobolev embeddings on bounded domains. We then remove the compact support assumption on the initial data through an approximation together with a diagonal compactness argument. By contrast, if $p,q,\alpha,\beta\geq 1$, then the nonlinearities are locally Lipschitz at the origin. This allows us to use Banach's contraction principle and, in particular, to obtain uniqueness of the small data mild solution. Among other things, we develop an iteration scheme adapted to the mixed product-type nonlinearities for the nonexistence part. Starting from pointwise lower bounds for the damped wave kernel, we recursively improve the lower bounds for two solution components by using iteration arguments for $k$ steps (see later, Lemma \ref{lem_matrix}) to derive infinite $L^2$-growth of solutions in finite time as $k\to \ity$. This approach not only yields the nonexistence of global (in time) mild solutions in the subcritical case, but also explains the different forms of the critical thresholds in the two cases above. \medskip
	
	\textbf{Notations:} Throughout the paper, we write $f\lesssim g$ if there exists a constant
	$C>0$ such that $f\leq Cg$. Moreover, we write $f\sim g$ whenever both $f\lesssim g$ and $g\lesssim f$ hold. A positive constant $C$ appearing in some estimates may be changed from line to line. Usually, all Lebesgue and Sobolev spaces are understood to be defined over $\mathbb{R}^n$. We denote by $\mathcal{C}^\infty_{\rm c}(\Omega)$, the space of smooth functions compactly supported in $\Omega$ and by $\mathcal{D}'(\Omega)$ its continuous dual or the space of distributions on $\Omega$. We use the notation ${\bf 1}_{\Omega}(x)$ as the indicator function of $\Omega$, that is, ${\bf 1}_{\Omega}(x)=1$ if $x\in \Omega$ and ${\bf 1}_{\Omega}(x)=0$ if $x\not \in\Omega$. For $\mu\in\mathbb{R}$, we set $[\mu]^+:=\max\{\mu,0\}$ and $\lceil\mu\rceil:=\min\{k\in\mathbb{Z}:k\geq\mu\}$. We use $\mathbf{C}:=(C,C)^{\top}$ to denote a constant vector in $\mathbb{R}^2$. For two matrices $M=(m_{ij})$ and $N=(n_{ij})$ of the same size, the inequality $M\leq N$ is understood componentwise, namely, $m_{ij}\leq n_{ij}$ for all $i$ and $j$. Finally, the notations $A\hookrightarrow B$ and $A\Subset B$ mean that $A$ is continuously embedded and compactly embedded into $B$, respectively.\medskip
	
	\textbf{Main results:} The first one concerns with the global (in time) existence of small data solutions.
	
	\begin{theorem}\label{Thr_1_Globalexistence}
		Let $p,q>0$ and $\alpha,\beta\in[0,1)$ such that $\min\{p+\alpha,q+\beta\}>1$. Set $r=\min\{2,p+\alpha,q+\beta\}$ and $\kappa_{r}=(n-1)|1/2-1/r|$. Assume that $1\leq n\leq3$ and
		\begin{equation}\label{con_G-N_Thr1}
			\max\{p+\alpha,q+\beta\}<3\quad \text{ if }n=3,
		\end{equation}
		together with the following condition:    \begin{equation}\label{Thr1globalexistence_Criticalcurve}
			\frac{1+ \max\{q-\alpha,p-\beta\}}{pq-(1-\alpha)(1-\beta)}< \frac{n}{2}.
		\end{equation}
		Then there exists a constant $\varepsilon_0>0$ such that for any small initial data $(u_0,u_1,v_0,v_1)\in \mathcal{D}_{r}:= \big(H^1\cap H^{\kappa_{r}}_{r} \cap L^1\big)^2 \times \big(L^2\cap L^1\big)^2$ satisfying $\|(u_0,u_1,v_0,v_1)\|_{\mathcal{D}_{r}}\leq\varepsilon_0$, the problem \eqref{Main_system} admits a global (in time) mild solution in the sense of Definition \ref{MildSol.Def} belonging to
		\begin{align*}
			(u,v)\in \left(\mathcal{C}([0,\infty); H^1 \cap L^{r})\cap \mathcal{C}^1([0,\infty);L^2)\right)^2.
		\end{align*}
		Moreover, the following estimates hold:
		\begin{align*}
			\|u(t,\cdot)\|_{\dot{H}^1} &\lesssim (1+t)^{-\frac{n}{4}-\frac{1}{2}+[\ell_u]^+} \|(u_0,u_1,v_0,v_1)\|_{\mathcal{D}_{r}},\\
			\|u(t,\cdot)\|_{L^{r}} &\lesssim (1+t)^{-\frac{n}{2}(1-\frac{1}{r})+[\ell_u]^+} \|(u_0,u_1,v_0,v_1)\|_{\mathcal{D}_{r}},\\
			\|u(t,\cdot)\|_{L^2} &\lesssim (1+t)^{-\frac{n}{4}+[\ell_u]^+}  \|(u_0,u_1,v_0,v_1)\|_{\mathcal{D}_{r}},\\
			\|u_t(t,\cdot)\|_{L^2} &\lesssim (1+t)^{-\frac{n}{4}-1+[\ell_u]^+}\|(u_0,u_1,v_0,v_1)\|_{\mathcal{D}_{r}},\\
			\|v(t,\cdot)\|_{\dot{H}^1} &\lesssim (1+t)^{-\frac{n}{4}-\frac{1}{2}+[\ell_v]^+} \|(u_0,u_1,v_0,v_1)\|_{\mathcal{D}_{r}},\\
			\|v(t,\cdot)\|_{L^{r}} &\lesssim (1+t)^{-\frac{n}{2}(1-\frac{1}{r})+[\ell_v]^+} \|(u_0,u_1,v_0,v_1)\|_{\mathcal{D}_{r}},\\
			\|v(t,\cdot)\|_{L^2} &\lesssim (1+t)^{-\frac{n}{4}+[\ell_v]^+}  \|(u_0,u_1,v_0,v_1)\|_{\mathcal{D}_{r}},\\
			\|v_t(t,\cdot)\|_{L^2} &\lesssim (1+t)^{-\frac{n}{4}-1+[\ell_v]^+}\|(u_0,u_1,v_0,v_1)\|_{\mathcal{D}_{r}},
		\end{align*}
		where
		$$\ell_u:= (1-\alpha)^{-1}\left(1-\frac{n}{2}(p+\alpha-1)\right) + \varepsilon \text{ and } \ell_v:= (1-\beta)^{-1}\left(1-\frac{n}{2}(q+\beta-1)\right) + \varepsilon$$
		with any sufficiently small positive constant $\varepsilon$.
	\end{theorem}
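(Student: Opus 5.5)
The plan is to realize the mild solution as a fixed point of
\[
N(u,v)=\big(u^{\rm lin}+\textstyle\int_0^t E(t-s)*(|u|^\alpha|v|^p)(s)\,ds,\;\; v^{\rm lin}+\int_0^t E(t-s)*(|u|^q|v|^\beta)(s)\,ds\big),
\]
where $E$ is the damped wave kernel. The map acts on a closed convex set in the Banach space $X(T)$ of pairs $(u,v)\in\big(\mathcal C([0,T];H^1\cap L^r)\cap\mathcal C^1([0,T];L^2)\big)^2$. The norm of $X(T)$ is the supremum of the eight weighted quantities in the statement, for instance
\[
(1+t)^{\frac n4-[\ell_u]^+}\|u(t)\|_{L^2}\quad\text{and}\quad (1+t)^{\frac n2(1-\frac1r)-[\ell_v]^+}\|v(t)\|_{L^r}.
\]
Since $\alpha,\beta<1$ and $p,q$ may be smaller than $1$, $N$ need not be Lipschitz. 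I would therefore run Schauder's theorem on the ball $B_M=\{\|(u,v)\|_{X(T)}\le M\varepsilon\}$, with bounds uniform in $T$, and pass to $T=\infty$ at the end. The linear part is handled by the Matsumura-type $(L^1\cap L^2)$–$L^2$ estimates. The $L^r$ bound at the data level uses $\kappa_r$: at the origin this loss of $(n-1)|1/2-1/r|$ derivatives is exactly what $H^{\kappa_r}_r$ supplies.

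For the Duhamel terms I split $\int_0^t=\int_0^{t/2}+\int_{t/2}^t$. On $[0,t/2]$ I use $L^1\cap L^2\to L^2$ (resp. $\dot H^1$, $L^r$) estimates, so the nonlinearity is measured in $L^1\cap L^2$. On $[t/2,t]$ I use $L^2\to L^2$ only, or $L^{r'}$-type estimates. Hölder gives
\[
\||u|^\alpha|v|^p\|_{L^1}\le \|u\|_{L^{\alpha+p}}^{\alpha}\|v\|_{L^{\alpha+p}}^{p},\qquad \||u|^\alpha|v|^p\|_{L^2}\le\|u\|_{L^{2(\alpha+p)}}^{\alpha}\|v\|_{L^{2(\alpha+p)}}^{p}.
\]
The $L^\sigma$ norms with $\sigma\in[r,2(p+\alpha)]$ are interpolated via Gagliardo–Nirenberg between $L^r$ (or $L^2$) and $\dot H^1$. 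For $n\le 2$ this is unrestricted, and for $n=3$ it requires $2(p+\alpha)\le 6$, which is exactly \eqref{con_G-N_Thr1}. This produces time factors of the form $(1+s)^{-\frac n2(p+\alpha-1)+\alpha[\ell_u]^++p[\ell_v]^+}$ up to the decay of $u$ itself.

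The heart of the proof, and the step I expect to be the main obstacle, is the exponent bookkeeping showing that the weights $[\ell_u]^+,[\ell_v]^+$ close. The $s$-integral over $[0,t/2]$ produces a loss $(1+t)^{[1-\frac n2(p+\alpha-1)+\alpha\ell_u+p\ell_v-\dots]^+}$, and this must not exceed $(1+t)^{[\ell_u]^+}$. Because $\alpha<1$, the self-interaction $\alpha\ell_u$ can be absorbed, which gives the linear system
\[
(1-\alpha)\ell_u\ge 1-\tfrac n2(p+\alpha-1)+p[\ell_v]^+ \ \text{(corrected)},\qquad (1-\beta)\ell_v\ge 1-\tfrac n2(q+\beta-1)+q[\ell_u]^+ \ \text{(corrected)}.
\]
I would do the case analysis according to which of $\ell_u,\ell_v$ is positive. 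When both are positive, solvability of the coupled system is equivalent to the determinant condition $pq>(1-\alpha)(1-\beta)$ together with \eqref{Thr1globalexistence_Criticalcurve}. This is where the curve arises, and the $\varepsilon$ in $\ell_u,\ell_v$ avoids logarithmic borderline cases. I would first verify the worst case (both components growing) by writing out the Neumann-type inequality, and then check that the remaining cases are weaker.

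Having obtained $\|N(u,v)\|_{X}\le C\varepsilon+C\|(u,v)\|_X^{p+\alpha}+C\|(u,v)\|_X^{q+\beta}$ with both total exponents $>1$, the ball $B_M$ is invariant for small $\varepsilon$. Continuity of $N$ on $B_M$ follows from Hölder continuity of $z\mapsto|z|^\gamma$ combined with the same estimates. Compactness comes from finite propagation speed: for compactly supported data, all elements of $N(B_M)$ are supported in a fixed ball on bounded time intervals, and the Rellich embedding $H^1(B_R)\Subset L^2,L^r$ together with equicontinuity in $t$ (Arzelà–Ascoli) gives relative compactness. Schauder then yields a fixed point satisfying the stated estimates uniformly. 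For general data I approximate by compactly supported data and extract a diagonal subsequence, converging locally in space-time, whose limit is a mild solution. Lower semicontinuity of the norms passes the decay estimates to this limit.
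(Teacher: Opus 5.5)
Your overall architecture (Schauder's theorem for compactly supported data with compactness from finite propagation speed, then approximation and a diagonal argument) is the paper's. The gap is in the step you single out as the heart, which is aimed at the wrong case.

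The weights in the statement carry no cross terms. With $\ell_u=(1-\alpha)^{-1}\bigl(1-\frac n2(p+\alpha-1)\bigr)+\varepsilon$, your first inequality leaves only the slack $(1-\alpha)\varepsilon$. On the other hand, for small $\varepsilon$ a positive $\ell_v=(1-\beta)^{-1}\bigl(1-\frac n2(q+\beta-1)\bigr)+\varepsilon$ is at least $\varepsilon$, so $p[\ell_v]^+\ge p\varepsilon>(1-\alpha)\varepsilon$ because $p>1-\alpha$. Consequences:
\begin{itemize}
\item Your ``both components growing'' case cannot be closed with the stated exponents. With free weights it closes only at the price of a positive loss for $v$ as well, i.e.\ weaker estimates than claimed.
\item A Neumann-type resolution is not available. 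Since $\min\{p+\alpha,q+\beta\}>1$ forces $pq>(1-\alpha)(1-\beta)$, eliminating one weight from the coupled inequalities gives an upper, not a lower, bound for the other.
\end{itemize}
The missing observation is that this case is empty. Assume w.l.o.g.\ $p+\alpha\le q+\beta$, so the maximum in \eqref{Thr1globalexistence_Criticalcurve} is $q-\alpha$, and set $x=p+\alpha-1\le y=q+\beta-1$. If $y\le 2/n$, then
\[
\tfrac n2\bigl(pq-(1-\alpha)(1-\beta)\bigr)=\tfrac n2\bigl(xy+(1-\beta)x+(1-\alpha)y\bigr)\le \tfrac n2\,y\,(x+2-\alpha-\beta)\le x+2-\alpha-\beta\le 1+q-\alpha,
\]
contradicting \eqref{Thr1globalexistence_Criticalcurve}. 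Hence $q+\beta>1+2/n$, so $[\ell_v]^+=0$ and only $u$ loses decay. Your first inequality then holds by the choice of $\varepsilon$. The curve enters solely through the integrability of the $v$-source on $[0,t/2]$, namely $\frac n2(q+\beta-1)-q[\ell_u]^+>1$ (this is \eqref{eq_condition_gt_1}). After multiplication by $1-\alpha$ this is exactly \eqref{Thr1globalexistence_Criticalcurve}, up to an $O(\varepsilon)$ term. So the curve lives in the mixed case. That case is the one that must be verified, not one that is weaker than the both-growing case.

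There are also smaller issues in the fixed-point step.
\begin{itemize}
\item Rellich plus Arzel\`a--Ascoli gives relative compactness of $N(B_M)$ only in $\mathcal C([0,T];L^2(\Omega_T))$, where $\Omega_T=\{|x|\le R+T\}$. It does not give compactness in the norm of $X(T)$, which also controls $\dot H^1$ and $\partial_t$ in $L^2$.
\item The claim that $N(B_M)$ is supported in a fixed ball needs the constraint $\supp w(t,\cdot)\subset\{|x|\le R+t\}$ built into $B_M$. Otherwise $F(U)$ is not compactly supported.
\item The paper applies Schauder in $E_T=(\mathcal C([0,T];L^2(\Omega_T)))^2$. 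It proves the weighted ball is closed there by weak-$*$ lower semicontinuity. It proves $\mathcal N$ is continuous for this weaker topology by interpolating with the uniform $\dot H^1$ bound to get convergence in $L^{2(\alpha_w+\beta_w)}$; this is where the \emph{strict} inequality in \eqref{con_G-N_Thr1} matters for $n=3$.
\item Alternatively, you can stay in $X(T)$. By Aubin--Lions, $F(B_M)$ is relatively compact in $L^1([0,T];L^2)$, and by Lemma \ref{regularity_K} the Duhamel operator maps this space boundedly into $\mathcal C([0,T];H^1)\cap\mathcal C^1([0,T];L^2)$.
\item There is no uniqueness, so fixed points on different intervals $[0,T]$ need not agree. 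Passing to $T=\infty$ therefore needs a diagonal compactness extraction as in the paper's Step 4, not merely bounds uniform in $T$.
\end{itemize}
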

	\begin{remark}
		{\rm
			The restriction \eqref{con_G-N_Thr1} for $n=3$ appearing in Theorem \ref{Thr_1_Globalexistence} arises from using the compact embedding $H^1(\Omega)\Subset L^{2(\alpha_w+\beta_w)}(\Omega)$ on bounded domains $\Omega\subset\mathbb{R}^3$ in our proof (see later, Section \ref{sec2}).
		}
	\end{remark}
	Next, a blow-up result reads as follows.
	\begin{theorem}\label{Thr1_Blowup}
		Let $p,q>0$ and $\alpha,\beta\in[0,1)$ such that $\min\{p+\alpha,q+\beta\}>1$. Assume that $1\leq n\leq3$, $u_0=v_0=0$ and $u_1,v_1\in L^1\cap H^1$ satisfying
		\[
		M_u:=\int_{\mathbb{R}^n}u_1(x)\,dx>0,\qquad M_v:=\int_{\mathbb{R}^n}v_1(x)\,dx>0.
		\]
		If
		\begin{equation}\label{Thr1blowup_Criticalcurve}
			\frac{1+ \max\{q-\alpha,p-\beta\}}{pq-(1-\alpha)(1-\beta)}> \frac{n}{2},
		\end{equation}
		then there is no global (in time) mild solution of \eqref{Main_system} in the sense of Definition \ref{MildSol.Def}.
	\end{theorem}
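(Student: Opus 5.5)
Argue by contradiction: suppose $(u,v)$ is a global mild solution. Then on every strip $[0,T]\times\mathbb{R}^n$ it satisfies
\[
u(t,x)=E_1(t,\cdot)\ast_x u_1+\int_0^t E_1(t-s,\cdot)\ast_x\big(|u|^\alpha|v|^p\big)(s)\,ds,
\]
and the analogous identity holds for $v$, where $E_1$ is the fundamental solution of the damped wave operator with data $(0,\delta)$.

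\textbf{Step 1: kernel lower bound and first iterate.} I would use the known pointwise lower bound for the one-dimensional-to-three-dimensional damped wave kernel. For $1\le n\le3$ the kernel $E_1$ is nonnegative. In the parabolic zone $|x|^2\le t$, $t\ge1$, it satisfies $E_1(t,x)\gtrsim t^{-n/2}$. In dimension $3$ this is proved through the representation of the kernel via modified Bessel functions.

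From positivity of $M_u$, $M_v$, I would obtain starting bounds on a shrinking parabolic region $\{|x|^2\le t,\ t\ge t_0\}$:
\[
u\ge C_0 t^{-n/2},\qquad v\ge C_0 t^{-n/2}.
\]
This requires approximating $u_1$ by its compactly supported part and using positivity of the Duhamel terms.

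\textbf{Step 2: iteration.} Assume at step $k$ that
\[
u\ge D_k\, t^{-a_k}(t-t_k)^{b_k}\quad\text{and}\quad v\ge D'_k\, t^{-a'_k}(t-t_k)^{b'_k}
\]
on $\{|x|^2\le t-t_k\}$. Insert these into the Duhamel formula. The nonlinearity $|u|^\alpha|v|^p$ is bounded below by a product of powers, and integrating against $E_1(t-s,x-y)\gtrsim (t-s)^{-n/2}$ over $|y|^2\le s$ gains a factor $s^{n/2}$. Integrating in $s$ then gains one more power of $(t-t_{k+1})$. The exponents evolve affinely:
\[
\begin{pmatrix} b_{k+1}\\ b'_{k+1}\end{pmatrix}
=\begin{pmatrix}\alpha & p\\ q& \beta\end{pmatrix}
\begin{pmatrix} b_k\\ b'_k\end{pmatrix}+\mathbf{1}.
\]
The decay exponents $a_k$ follow an analogous recursion involving the matrix $\begin{pmatrix}\alpha&p\\q&\beta\end{pmatrix}$ and $n/2$. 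The constants satisfy
\[
D_{k+1}\gtrsim D_k^\alpha D_k'^{p}\,C^{-k}.
\]
This is exactly the content of Lemma~\ref{lem_matrix}: the componentwise matrix inequality for iterates of this affine map. Its spectral radius
\[
\lambda=\frac{\alpha+\beta+\sqrt{(\alpha-\beta)^2+4pq}}{2}
\]
exceeds $1$ because $pq>(1-\alpha)(1-\beta)$. In fact $pq>(1-\alpha)(1-\beta)$ follows from \eqref{Thr1blowup_Criticalcurve}, since the left side must be positive. Consequently the exponents grow like $\lambda^k$.

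\textbf{Step 3: conclusion.} I would evaluate the lower bound at a fixed large $t$ by taking logarithms. For example,
\[
\log u(t,x)\ge \lambda^k\Big(\log \widetilde D+\Theta\log t\Big)+O(k)\cdots .
\]
Here $\Theta$ is the net time exponent, namely the positive eigenvector combination of $b-a$ after subtracting the $n/2$ loss. A computation with the eigenvector of the matrix shows that $\Theta>0$ exactly when \eqref{Thr1blowup_Criticalcurve} holds. The numerator $1+\max\{q-\alpha,p-\beta\}$ arises from the fixed point of the affine recursion combined with the choice of the worse component. Hence for $t$ large enough the bracket is positive, so $u(t,\cdot)\to\infty$ on a ball as $k\to\infty$. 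This contradicts $u(t,\cdot)\in L^2$.

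\textbf{Main obstacle.} The main obstacle is Step 2/3. One must handle non-symmetric $\alpha\ne\beta$ and $p\ne q$ so that the max in the numerator comes out sharply. I would first diagonalize, or use the Perron eigenvector of the $2\times2$ nonnegative matrix, to track a single scalar quantity. I would also take care that the shifts $t_k$ stay bounded, for instance $t_k=\sum 2^{-j}$, so the region remains nonempty.
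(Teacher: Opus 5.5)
The gap is in Step 3. The assertion that the Perron-eigenvector combination $\Theta$ is positive exactly under \eqref{Thr1blowup_Criticalcurve} is false, and no choice of constants or shifts $t_k$ repairs it.

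Whatever the bookkeeping with $t^{-a_k}(t-t_k)^{b_k}$, let $N_k\in\mathbb R^2$ be the net power of $t$ in your two lower bounds, for fixed large $t$. It obeys $N_{k+1}=PN_k+\mathbf 1$ with $N_0=-\frac n2\mathbf 1$, where $P=\begin{pmatrix}\alpha&p\\ q&\beta\end{pmatrix}$. Put $\theta^*:=(P-I)^{-1}\mathbf 1=(\theta_1^*,\theta_2^*)^{\top}$, that is, $\theta_1^*=\frac{1+p-\beta}{pq-(1-\alpha)(1-\beta)}$ and $\theta_2^*=\frac{1+q-\alpha}{pq-(1-\alpha)(1-\beta)}$. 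Then $N_k=P^k(\theta^*-\frac n2\mathbf 1)-\theta^*$. If $\ell>0$ is the left Perron vector, $\ell P=\lambda\ell$, then $\ell(P-I)^{-1}=\ell/(\lambda-1)$, so
\[
\ell N_k=(\ell_1+\ell_2)\Big[\lambda^k\Big(\frac{1}{\lambda-1}-\frac n2\Big)-\frac{1}{\lambda-1}\Big].
\]
When $\alpha+\beta>0$, $P$ is primitive, and both components of $N_k$ eventually carry the sign of $\frac{1}{\lambda-1}-\frac n2$.

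Hence your scheme gives blow-up only if $\lambda<1+\frac2n$. Equivalently, the $\ell$-weighted average $\frac{1}{\lambda-1}=\frac{\ell\theta^*}{\ell_1+\ell_2}$ of $\theta_1^*,\theta_2^*$ must exceed $\frac n2$. The theorem assumes only $\max\{\theta_1^*,\theta_2^*\}>\frac n2$. Concretely, take $n=2$, $\alpha=\beta=\frac12$, $q=1$, $p=4$. Every hypothesis holds ($\theta_1^*=\frac65>1$, $\theta_2^*=\frac25$), yet $\lambda=\frac52>2$, so your bounds decay faster and faster and produce no contradiction. Reducing to one scalar via the Perron vector necessarily averages the two components and loses the max.

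What is missing is that a lower bound, once proved, stays available. At each step you should keep, for each component separately, whichever of the old and new exponents is better, instead of replacing both bounds. In the paper's notation ($u\ge c_kt^{-a_k}$, $v\ge d_kt^{-b_k}$ on $\Gamma^1_{r_k,t}$ for $t\ge T_k$), this is the recursion $A_{k+1}=\min\{A_k,PA_k-\mathbf 1\}$ for $A_k=(a_k,b_k)^{\top}$. It is nonincreasing. If it were bounded below, its limit would satisfy $A_\infty\le\frac n2\mathbf 1$ and $(P-I)A_\infty\ge\mathbf 1$. Since $(P-I)^{-1}\ge0$ entrywise, this gives $A_\infty\ge\theta^*$, contradicting $\max\theta_i^*>\frac n2$. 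That is Lemma \ref{lem_matrix}, and it is exactly where the max comes from.

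In the example, $u$ keeps its linear decay $t^{-1}$ for two steps while $v$ improves to $t^{-1/4}$. Then $A_3=(\frac12,\frac18)^{\top}$, $A_4=(-\frac14,-\frac7{16})^{\top}$, and both exponents run to $-\infty$. Once they are negative, $-A_k$ grows at least like $\rho_2^k$ with $\rho_2=\min\{p+\alpha,q+\beta\}>1$. The $O(k)$ losses in $\log c_k,\log d_k$ are then absorbed ($-\log c_k\le -Qa_k$, $-\log d_k\le -Qb_k$).

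Two smaller points:
\begin{itemize}
\item $u_1,v_1$ are not assumed nonnegative, so their tails cannot be discarded by positivity. You need a quantitative comparison with the heat flow, as in Lemma \ref{lem_lowerbound_heatkernel}.
\item The bound $\mathcal K(t-s,x-y)\gtrsim(t-s)^{-n/2}$ requires $|x-y|\lesssim\sqrt{t-s}$, which $|y|^2\le s$ does not give for $s$ near $t$. This is why the paper localizes to $s\in[(1-\delta_k)t,t]$ and $x-y\in\Gamma^2_{r_0,t-s}$.
\end{itemize}
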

	\begin{remark}
		{\rm
			From the conditions \eqref{Thr1globalexistence_Criticalcurve} and \eqref{Thr1blowup_Criticalcurve} in Theorems \ref{Thr_1_Globalexistence} and \ref{Thr1_Blowup}, respectively, we may claim that the threshold to separate existence and nonexistence of global mild solutions is described by
			\begin{equation}\label{crit_curve_sublinear}
				\frac{1+ \max\{q-\alpha,p-\beta\}}{pq-(1-\alpha)(1-\beta)}= \frac{n}{2}.
			\end{equation}  
			Especially, when $\alpha=\beta=0$, the gained threshold \eqref{crit_curve_sublinear} exactly coincides with the critical curve \eqref{critical_curve_weakly_coupled}. The figure \ref{fig:critical_curve_shift} illustrates the shifting of the critical curve when $\alpha,\beta$ have change.
		}
		
		\begin{figure*}[t]
			\centering
			
			\begin{tikzpicture}
				
				\begin{axis}[
					width=8.5cm,
					height=7.0cm,
					xmin=0, xmax=5.15,
					ymin=0, ymax=5.15,
					axis lines=left,
					xlabel={$p$},
					ylabel={$q$},
					xtick=\empty,
					ytick=\empty,
					clip=false,
					enlargelimits=false,
					axis line style={->},
					label style={font=\normalsize},
					every axis x label/.style={
						at={(current axis.right of origin)},
						anchor=west
					},
					every axis y label/.style={
						at={(current axis.above origin)},
						anchor=south
					}
					]
					
					\node[
					anchor=north east,
					font=\scriptsize
					]
					at (axis cs:0,0) {$0$};
					
					\pgfmathsetmacro{\pcA}{2/3}
					\pgfmathsetmacro{\pcB}{5/3}
					
					\draw[
					dashed,
					black!60,
					line width=0.5pt
					]
					(axis cs:\pcA,0) -- (axis cs:\pcA,5.15);
					
					\draw[
					dashed,
					black!60,
					line width=0.5pt
					]
					(axis cs:0,\pcA) -- (axis cs:5.15,\pcA);
					
					\node[
					anchor=north,
					font=\scriptsize
					]
					at (axis cs:\pcA,0)
					{$p=2/n$};
					
					\node[
					anchor=east,
					font=\scriptsize
					]
					at (axis cs:0,\pcA)
					{$q=2/n$};
					
					\draw[
					dashed,
					black!60,
					line width=0.5pt
					]
					(axis cs:\pcB,0) -- (axis cs:\pcB,5.15);
					
					\draw[
					dashed,
					black!60,
					line width=0.5pt
					]
					(axis cs:0,\pcB) -- (axis cs:5.15,\pcB);
					
					\node[
					anchor=north,
					font=\scriptsize
					]
					at (axis cs:\pcB,0)
					{$p=1+2/n$};
					
					\node[
					anchor=east,
					font=\scriptsize
					]
					at (axis cs:0,\pcB)
					{$q=1+2/n$};
					
					\addplot[
					black,
					line width=0.65pt,
					domain=1:1.6666667,
					samples=120,
					smooth
					]
					{(5/3)/(x-2/3)};
					
					\addplot[
					black,
					line width=0.65pt,
					domain=1.6666667:5,
					samples=120,
					smooth
					]
					{(1+(2/3)*(x+1))/x};
					
					\addplot[
					blue,
					line width=0.65pt,
					domain=0.92:1.4666667,
					samples=120,
					smooth
					]
					{(94/75)/(x-2/3)};
					
					\addplot[
					blue,
					line width=0.65pt,
					domain=1.4666667:5,
					samples=120,
					smooth
					]
					{(0.72+(2/3)*(x+0.9))/x};
					
					\addplot[
					red!75!black,
					line width=0.65pt,
					domain=0.80:1.1666667,
					samples=120,
					smooth
					]
					{(19/30)/(x-2/3)};
					
					\addplot[
					red!75!black,
					line width=0.65pt,
					domain=1.1666667:5,
					samples=120,
					smooth
					]
					{(0.3+(2/3)*(x+0.6))/x};
					
					\addplot[
					green!55!black,
					line width=0.65pt,
					domain=0.71:0.8666667,
					samples=120,
					smooth
					]
					{(29/150)/(x-2/3)};
					
					\addplot[
					green!55!black,
					line width=0.65pt,
					domain=0.8666667:5,
					samples=120,
					smooth
					]
					{(0.06+(2/3)*(x+0.3))/x};
					
					\draw[
					black,
					line width=0.65pt
					]
					(rel axis cs:0.60,0.92)
					--
					(rel axis cs:0.70,0.92);
					
					\node[
					anchor=west,
					font=\scriptsize
					]
					at (rel axis cs:0.72,0.92)
					{$(\alpha,\beta)=(0,0)$};
					
					\draw[
					blue,
					line width=0.65pt
					]
					(rel axis cs:0.60,0.84)
					--
					(rel axis cs:0.70,0.84);
					
					\node[
					anchor=west,
					font=\scriptsize
					]
					at (rel axis cs:0.72,0.84)
					{$(\alpha,\beta)=(0.2,0.1)$};
					
					\draw[
					red!75!black,
					line width=0.65pt
					]
					(rel axis cs:0.60,0.76)
					--
					(rel axis cs:0.70,0.76);
					
					\node[
					anchor=west,
					font=\scriptsize
					]
					at (rel axis cs:0.72,0.76)
					{$(\alpha,\beta)=(0.5,0.4)$};
					
					\draw[
					green!55!black,
					line width=0.65pt
					]
					(rel axis cs:0.60,0.68)
					--
					(rel axis cs:0.70,0.68);
					
					\node[
					anchor=west,
					font=\scriptsize
					]
					at (rel axis cs:0.72,0.68)
					{$(\alpha,\beta)=(0.8,0.7)$};
					
					\node[
					anchor=north,
					align=center,
					font=\small
					]
					at (rel axis cs:0.50,-0.1)
					{};
					
				\end{axis}
			\end{tikzpicture}
			
			\vspace{-3mm}
			
			\caption{
				Shifting the critical curve for several choices of
				$(\alpha,\beta)$.
			}
			\label{fig:critical_curve_shift}
		\end{figure*}
	\end{remark}
	\section{Global existence for small data solutions}\label{sec2}
	We first recall the representation formula for the solution to the linear Cauchy problem 
	\begin{equation}\label{Linear_damped_wave}
		\left\{
		\begin{aligned}
			&w_{tt}-\Delta w+w_t=0,&&x\in \mathbb{R}^n,t>0,\\
			&(w,w_t)(0,x)=(w_0,w_1),&&x\in \mathbb{R}^n,
		\end{aligned}
		\right.
	\end{equation}
	in the form
	\begin{equation*}
		w^{\rm lin}(t,x) = (\mathcal{K}(t,x) +\partial_t \mathcal{K}(t,x))\ast_x w_0(x) + \mathcal{K}(t,x) \ast_x w_1(x),
	\end{equation*}
	where the Fourier transform of $\mathcal{K}(t,x)$ is given by
	\begin{align*}
		\widehat{\mathcal{K}}(t,\xi) = 
		\displaystyle\frac{e^{-t/2}\sinh{\left(t \sqrt{1/4 -|\xi|^2}\right)}}{\sqrt{1/4- |\xi|^2}}  \text{ if } |\xi| < 1/2
		\,\,\text{ , }\,\, \widehat{\mathcal{K}}(t, \xi) = \displaystyle\frac{e^{-t/2}\sin{\left(t \sqrt{|\xi|^2-1/4}\right)}}{\sqrt{|\xi|^2-1/4}} \text{ if } |\xi| > 1/2
	\end{align*}
	and $\widehat{\mathcal{K}}(t,\xi) = t e^{-t/2}$ if $|\xi| = 1/2$. Let $\chi_k= \chi_k(r)$ with $k\in\{\rm L,H\}$ be smooth cut-off functions having the following properties:
	\begin{align*}
		&\chi_{\rm L}(r)=
		\begin{cases}
			1 &\quad \text{ if }r\le \varepsilon^*/2, \\
			0 &\quad \text{ if }r\ge \varepsilon^*,
		\end{cases}
		\text{ and } \qquad
		\chi_{\rm H}(r)= 1 -\chi_{\rm L}(r),
	\end{align*}
	where $\varepsilon^*$ is a sufficiently small constant. It is obvious to see that $\chi_{\rm H}(r)= 1$ if $r \geq \varepsilon^*$ and $\chi_{\rm H}(r)= 0$ if $r \le \varepsilon^*/2$. Now, we restate the following important result, which comes into play in our proof.
	
	\begin{proposition}\cite[Theorem 1.1]{IkedaInuiOkamoto2019}\label{LinearEstimates}
		Let $n\geq1$, $j\in\{0,1\}$, $1\leq m_1\leq m_2<\infty$, $m_2\neq1$,
		$\kappa:=(n-1)\left|1/2-1/m_2\right|$ and $s_1\geq s_2\geq0$.
		Then, the following estimate holds for all $t>0$:
		\begin{align*}
			&\big\|\partial_t^j |\nabla|^{s_1} \mathcal{K}(t,\cdot)\ast_x \varphi\big\|_{L^{m_2}}\\ &\quad\lesssim (1+t)^{-\frac{n}{2}(\frac{1}{m_1}-\frac{1}{r})-\frac{s_1-s_2}{2}-j} \big\||\nabla|^{s_2} \chi_{\rm L}(|\nabla|)\varphi\big\|_{L^{m_1}} + e^{-ct} \big\||\nabla|^{s_1}\chi_{\rm H}(|\nabla|) \varphi\big\|_{H_{m_2}^{\kappa+j-1}},
		\end{align*}
		where $c$ is a suitable positive constant.
		Moreover, the following estimate holds for $1\leq m_1\leq\infty$, $1<m_2<\infty$, and $d>n/m_2$:
		\begin{align*}
			&\big\|\partial_t^j |\nabla|^{s_1}\mathcal{K}(t,\cdot) \ast_x \varphi \big\|_{L^{\infty}}\\ &\quad\lesssim (1+t)^{-\frac{n}{2m_1}-\frac{s_1-s_2}{2}-j} \big\||\nabla|^{s_2} \chi_{\rm L}(|\nabla|) \varphi\big\|_{L^{m_1}} + e^{-ct}\big \||\nabla|^{s_1 + d} \chi_{\rm H}(|\nabla|) \varphi\big\|_{H^{\kappa+j -1}_{m_2}}.
		\end{align*}
	\end{proposition}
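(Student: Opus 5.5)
The estimate splits along $\varphi=\chi_{\rm L}(|\nabla|)\varphi+\chi_{\rm H}(|\nabla|)\varphi$: the low-frequency part has parabolic (heat-like) behavior, while the high-frequency part is exponentially damped but carries the usual $L^p$ derivative loss of the wave propagator. I read the exponent $-\frac n2(\frac1{m_1}-\frac1r)$ in the first estimate as $-\frac n2(\frac1{m_1}-\frac1{m_2})$, which is what the scaling requires.

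\emph{Low frequencies.} For $|\xi|\le\varepsilon^*$ with $\varepsilon^*$ small, write $\sqrt{1/4-|\xi|^2}=\frac12-|\xi|^2-g(|\xi|^2)$ with $g$ analytic and $g(s)=O(s^2)$. This gives
\[
\widehat{\mathcal K}(t,\xi)=\frac{e^{-t(|\xi|^2+g(|\xi|^2))}-e^{-t(1-|\xi|^2-g(|\xi|^2))}}{2\sqrt{1/4-|\xi|^2}} .
\]
The second term is $O(e^{-ct})$ together with all its $\xi$-derivatives, so it is harmless. The first term factors as $e^{-t|\xi|^2}\,m_t(\xi)$. Here $m_t$ is a smooth symbol on the support of $\chi_{\rm L}$ with $t$-uniform bounds for $\partial_\xi^\gamma m_t(\xi)$ after the parabolic rescaling $\xi\mapsto\xi/\sqrt{1+t}$. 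Applying $\partial_t^j|\nabla|^{s_1-s_2}$ produces the extra factor $(|\xi|^2)^{j}|\xi|^{s_1-s_2}$.

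The kernel of $\partial_t^j|\nabla|^{s_1-s_2}\widehat{\mathcal K}\chi_{\rm L}$ is then a rescaled heat-type kernel. I expect its $L^{\rho}$ norm to be $\lesssim(1+t)^{-\frac n2(1-\frac1\rho)-\frac{s_1-s_2}2-j}$ with $1+\frac1{m_2}=\frac1\rho+\frac1{m_1}$. To justify this I would integrate by parts in $\xi$, using the rescaled variable $\eta=\sqrt{1+t}\,\xi$. When $s_1-s_2$ is not an even integer, $|\xi|^{s_1-s_2}$ is not smooth at $0$; there one uses the standard decay $|x|^{-n-(s_1-s_2)}$ of the kernel of $|\nabla|^{s_1-s_2}e^{-\Delta}$-type multipliers, which is still in $L^\rho$ for every $\rho\ge1$. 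Young's inequality then gives the first term on the right-hand side. The hypothesis $m_2\neq1$ covers the endpoint where only $\rho=1$ is allowed.

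\emph{High frequencies.} Here $\widehat{\mathcal K}=e^{-t/2}\frac{\sin(t\omega)}{\omega}$ with $\omega=\sqrt{|\xi|^2-1/4}$; the interval $\varepsilon^*/2\le|\xi|\le 1/2$ is covered by the $\sinh$ formula with exponential decay $e^{-ct}$. I would write $\sin(t\omega)/\omega$ as $\langle\xi\rangle^{-1}$ times a combination of $e^{\pm it|\xi|}$ and a symbol of order $0$. Then the Peral/Miyachi $L^{m_2}$ bound for $e^{\pm it|\nabla|}$ on frequency-localized data gives $\|e^{it|\nabla|}\chi_{\rm H}f\|_{L^{m_2}}\lesssim(1+t)^{\kappa}\|\chi_{\rm H} f\|_{H^{\kappa}_{m_2}}$ with $\kappa=(n-1)|1/2-1/m_2|$. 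Each $\partial_t$ costs one derivative, which accounts for the order $\kappa+j-1$. The polynomial growth is absorbed by $e^{-t/2}$, leaving $e^{-ct}$.

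\emph{The $L^\infty$ estimate.} For the low part, the kernel is bounded in $L^\infty$, so the same argument with $\rho=m_1'$ (Hölder instead of Young) works for all $1\le m_1\le\infty$. For the high part, I would use the Sobolev embedding $H^{d}_{m_2}\hookrightarrow L^\infty$ for $d>n/m_2$ and then the $L^{m_2}$ high-frequency bound already proved.

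\emph{Main obstacle.} The hardest step is the high-frequency $L^{m_2}$ wave estimate with the sharp loss $\kappa$. This is where the Fourier integral operator (Seeger–Sogge–Stein/Miyachi) theory enters, and I would cite it rather than reprove it. The low-frequency kernel bounds with non-integer $s_1-s_2$ need some care but are routine.
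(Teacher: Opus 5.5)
Your outline is correct. It is essentially the standard argument behind the cited result; the paper itself gives no proof, only the citation. That argument uses frequency splitting, parabolically rescaled heat-type kernel bounds with Young's (or H\"older's) inequality for the $\chi_{\rm L}$ part, and for the $\chi_{\rm H}$ part the Peral--Miyachi $L^{m_2}$ wave estimate with the order-zero factor $e^{\pm it(\omega-|\xi|)}$, plus Sobolev embedding in the $L^\infty$ case.

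One correction: the hypothesis $m_2\neq1$ has nothing to do with $\rho=1$ in the low-frequency part. That case occurs whenever $m_1=m_2$, and it is harmless because your low-frequency kernel lies in $L^1$. The hypothesis is needed in the high-frequency part, since the Peral--Miyachi bound, the Mikhlin theorem for the order-zero symbol, and the spaces $H^{s}_{m_2}$ all require $1<m_2<\infty$.
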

	For the brevity, we write $U=(u,v)$, $U_0=(u_0,v_0)$, $U_1=(u_1,v_1)$ and define
	\[
	F(U)=(F_u(U),F_v(U)):=(|u|^{\alpha}|v|^p,|u|^{q}|v|^{\beta}).
	\]  
	For convenience later, we denote
	\[
	(\alpha_u,\beta_u):=(\alpha,p),\quad
	(\alpha_v,\beta_v):=(q,\beta)\quad \text{ and }\quad F_w(U):=|u|^{\alpha_w}|v|^{\beta_w}
	\]
	with $w\in\{u,v\}$. To begin our proof, let us introduce the definition of a mild solution.
	\begin{definition}\label{MildSol.Def}
		Let $T\in(0,\infty]$. A pair $U=(u,v) \in(\mathcal{C}([0,T);L^2))^2$ is called a local (in time) mild solution of \eqref{Main_system} if $F_u(U),F_v(U)\in L_{\rm loc}^1\big([0,T);L^2\big)$ and the following relations hold true:
		\begin{equation}\label{duhamel_formula}
			\begin{cases}\vspace{0.1cm}
				u(t,x)=u^{\rm lin}(t,x)+\displaystyle\int_{0}^{t}\mathcal{K}(t-s,x)\ast_x F_u(U)\,ds, \\
				v(t,x)=v^{\rm lin}(t,x)+\displaystyle\int_{0}^{t}\mathcal{K}(t-s,x)\ast_x F_v(U)\,ds.
			\end{cases}
		\end{equation}
		If $T = \infty$, then we say that $(u,v)$ is a global (in time) mild solution to \eqref{Main_system}.
	\end{definition}
	We are going to give the proof of Theorem \ref{Thr_1_Globalexistence} with the assumption of compactly supported initial data in the first stage, and by removing the compact-support assumption in the second one through an approximation procedure combined with a compactness argument.

	\subsection{Proof of Theorem \ref{Thr_1_Globalexistence} for compactly supported initial data}
	In this section, let us assume that $(U_0,U_1)\in \mathcal{D}_{r}$ satisfying
	\[
	\supp u_j\cup\supp v_j\subset B_{R}:=\{x\in \mathbb{R}^n:|x|\leq R\},\qquad j\in\{0,1\}.
	\]
	Without loss of generality, we only give the proof for the case $p+\alpha\leq q+\beta$.
	
	\medskip\noindent {\bf $\bullet$ Step 1: Definition of the nonlinear mapping.} For each $t\geq 0$, we define
	$$ \Omega_t: = \{x\in \mathbb{R}^n:|x|\leq R+t\}. $$
	For each $T>0$, we define the space $X(T)$ as follows: 
	\begin{align*}
		X(T):=\Big\{U=(u,v)\in&\left(\mathcal{C}([0,T];L^2)\cap L^\infty([0,T]; H^1)\cap W^{1,\infty}([0,T];L^2)\right)^2\\
		&:\supp w(t,\cdot)\subset \Omega_t\text{ for any }t\in[0,T]\text{ and }w\in\{u,v\} \Big\}.
	\end{align*}
	The critical curve condition and $p+\alpha\leq q+\beta$ imply $q+\beta>1+2/n$, and hence $[\ell_v]^+=0$. For $w\in\{u,v\}$, setting
	\[
	\begin{aligned}
		\mathcal E_w(s):=&f_1(s)\|w(s,\cdot)\|_{L^{r}}
		+f_2(s)\|w(s,\cdot)\|_{L^2} +f_3(s)\|w(s,\cdot)\|_{\dot{H}^1}
		+f_4(s)\|\partial_tw(s,\cdot)\|_{L^2},
	\end{aligned}
	\]
	where
	\begin{align*}
		f_1(s)=(1+s)^{\frac{n}{2}\tron{1-\frac{1}{r}}},\;\;f_2(s)=(1+s)^{\frac{n}{4}},\;\;f_3(s)=(1+s)^{\frac{n+2}{4}},\;\;f_4=(1+s)^{\frac{n+4}{4}},
	\end{align*}
	we equip $X(T)$ with the weighted norm
	\[
	\|U\|_{X(T)}:=\operatorname*{ess\,sup}_{0\leq s\leq T}
	\left\{(1+s)^{-[\ell_u]^+}\mathcal E_u(s)+\mathcal E_v(s)\right\}.
	\]
	We define the mapping
	$$\mathcal{N}: U\in X(T)\mapsto \mathcal{N}[U]:=(\mathcal{N}_u[U],\mathcal{N}_v[U]), $$
	where
	\begin{equation}\label{mapN}
		\begin{aligned}
			\mathcal{N}_w[U](t,\cdot)&=(\mathcal{K}(t,\cdot)+\partial_t\mathcal{K}(t,\cdot))\ast_xw_0
			+\mathcal{K}(t,\cdot)\ast_xw_1
			+\int_0^t\mathcal{K}(t-s,\cdot)\ast_xF_w(U(s,\cdot))\,ds\\
			&=:w^{\rm lin}(t,\cdot)+w^{\rm nl}(t,\cdot)\qquad\text{ with } w\in\{u,v\}.
		\end{aligned}
	\end{equation} 
	
	\noindent {\bf $\bullet$ Step 2: Construction of a nonempty compact convex set.} Let us define
	\[
	\mathcal{B}_T:=\{W=(\phi,\psi)\in X(T):\|W\|_{X(T)}\leq \varepsilon\}
	\]
	and let
	\[
	E_T:=\left(\mathcal{C}([0,T];L^2(\Omega_T))\right)^2
	\]
	be equipped with the norm
	\[
	\|W\|_{E_T}:=\sup_{0\leq t\leq T}\Big\{\|\phi(t,\cdot)\|_{L^2(\Omega_T)}+\|\psi(t,\cdot)\|_{L^2(\Omega_T)}\Big\}.
	\] 
	\begin{lemma}
		$\mathcal{B}_T$ is convex and compact in $E_T$.
	\end{lemma}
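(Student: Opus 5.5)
Convexity is the easy half. $X(T)$ is a vector space: the support condition $\supp w(t,\cdot)\subset\Omega_t$ is preserved under linear combinations, and so is membership in $\mathcal{C}([0,T];L^2)\cap L^\infty([0,T];H^1)\cap W^{1,\infty}([0,T];L^2)$. The functional $\|\cdot\|_{X(T)}$ is an essential supremum of nonnegative combinations of seminorms, so it is itself a seminorm. Hence $\mathcal{B}_T$, a ball in this seminorm, is convex. Since every element vanishes outside $\Omega_T$, $\mathcal{B}_T$ also sits naturally inside $E_T$.

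For compactness we argue sequentially. Take $(W_k)_k=(\phi_k,\psi_k)_k\subset\mathcal{B}_T$. On $[0,T]$ the weights $f_i(s)$ and $(1+s)^{-[\ell_u]^+}$ are bounded above and below by positive constants depending only on $T$. Therefore there is $C_T>0$ such that for a.e. $t\in[0,T]$
\[
\|\phi_k(t,\cdot)\|_{H^1}+\|\psi_k(t,\cdot)\|_{H^1}\leq C_T\varepsilon,\qquad \|\partial_t\phi_k\|_{L^\infty([0,T];L^2)}+\|\partial_t\psi_k\|_{L^\infty([0,T];L^2)}\leq C_T\varepsilon .
\]
Since each $W_k$ is continuous in time with values in $L^2$, and the $H^1$-ball is closed in $L^2$, the first bound holds for every $t$. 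The second bound gives the uniform Lipschitz estimate
\[
\|W_k(t)-W_k(s)\|_{L^2}\leq C_T\varepsilon|t-s|,
\]
so the family is equicontinuous in $\mathcal{C}([0,T];L^2(\Omega_T))$. By Rellich--Kondrachov on the bounded set $\Omega_T$, $\{W_k(t)\}_k$ is bounded in $H^1_0$-type functions supported in $\Omega_T$ and hence relatively compact in $L^2(\Omega_T)$, for each fixed $t$. The vector-valued Arzel\`a--Ascoli theorem then yields a subsequence converging in $E_T$ to some $W=(\phi,\psi)\in E_T$.

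The main step is to show that the limit $W$ belongs to $\mathcal{B}_T$, i.e. that $\mathcal{B}_T$ is closed in $E_T$. This uses lower semicontinuity of each term of the norm under the available weak convergences.
\begin{itemize}
\item For each $t$, $W_k(t)\to W(t)$ in $L^2$, and the bounded $H^1$ sequence has weakly convergent subsequences whose limits must equal $W(t)$. Hence $\|W(t)\|_{\dot H^1}\leq\liminf_k\|W_k(t)\|_{\dot H^1}$.
\item Extracting subsequences, $W_k(t)\to W(t)$ a.e. in $x$, and Fatou's lemma gives $\|W(t)\|_{L^r}\leq\liminf_k\|W_k(t)\|_{L^r}$. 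The same holds for the $L^2$ norm.
\item $\partial_tW_k\rightharpoonup^*\partial_tW$ in $L^\infty([0,T];L^2)$, which is the dual of $L^1([0,T];L^2)$. The distributional limit is identified via the $L^2$ convergence.
\item The support condition $\supp W(t)\subset\Omega_t$ passes to the limit under $L^2$ convergence.
\end{itemize}

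The delicate point is that the norm is a pointwise-in-time essential supremum of a sum, and the liminf for $\partial_t$ is only available in the weak-* sense. The first approach I would try is to test against $g\in L^1([0,T])$ with $g\geq0$, $\int g=1$, supported near a Lebesgue point. The weak-* convergence then gives, for the whole weighted combination,
\[
\int g\,\big[(1+s)^{-[\ell_u]^+}\mathcal E_\phi+\mathcal E_\psi\big]\,ds\leq\liminf_k\int g\,\big[(1+s)^{-[\ell_u]^+}\mathcal E_{\phi_k}+\mathcal E_{\psi_k}\big]\,ds\leq\varepsilon,
\]
using duality for $\|\partial_t\phi\|_{L^2}$ paired with $g$. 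Letting $g$ concentrate at Lebesgue points of the limit functions then gives $\|W\|_{X(T)}\leq\varepsilon$. Continuity of $W$ in $L^2$ holds because it is a uniform limit, so $W\in X(T)$ and indeed $W\in\mathcal{B}_T$. This completes the compactness of $\mathcal{B}_T$ in $E_T$.
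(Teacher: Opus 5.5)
Your proposal is correct and follows essentially the same route as the paper: Rellich--Kondrachov together with Arzel\`a--Ascoli (with equicontinuity from the uniform $W^{1,\infty}([0,T];L^2)$ bound) gives relative compactness in $E_T$; closedness follows from preservation of the support condition and lower semicontinuity of the $X(T)$-bound; and convexity follows from the triangle inequality. Your departures are minor and work in your favor: you use Fatou's lemma instead of the paper's bound $\|\cdot\|_{L^r}\leq|\Omega_T|^{1/r-1/2}\|\cdot\|_{L^2}$. Your testing against normalized $g\geq 0$ also makes rigorous a step the paper only asserts, namely that the essential supremum of the weighted sum stays at most $\varepsilon$ in the limit even though its $\partial_t$ term converges only weak-$*$.
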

	\begin{proof}
		The definition of $\mathcal{B}_T$ implies that $\mathcal{B}_T$ is bounded in $(L^\infty([0,T];H^1(\Omega_T)))^2$ and that the family $\{\partial_tW:W\in\mathcal{B}_T\}$ is bounded in $(L^{\infty}([0,T];L^2(\Omega_T)))^2$. By the Rellich-Kondrachov theorem, $H^1(\Omega_T)\Subset L^2(\Omega_T)$. Thanks to the Arzel\`a-Ascoli theorem, it implies $\mathcal{B}_T$ is relatively compact in $E_T$. Now we show that $\mathcal{B}_T$ is closed in $E_T$. Indeed, let $\{W^{(k)}\}_{k\in\mathbb N}\subset \mathcal{B}_T$ be a sequence such that $W^{(k)}\to W$ in $E_T$ as $k\to \ity$. Extending all functions by zero outside $\Omega_T$, we equivalently have
		\[
		W^{(k)}\to W
		\quad\text{in }
		\bigl(\mathcal{C}([0,T];L^2)\bigr)^2\text{ as }k\to \ity.
		\]
		Since $\{W^{(k)}\}_{k\in \mathbb N}$ is bounded in $\bigl(L^\infty([0,T];H^1)\bigr)^2$ and $\{\partial_tW^{(k)}\}_{k\in \mathbb N}$ is bounded in $\bigl(L^\infty([0,T];L^2)\bigr)^2$, one has
		\[
		W^{(k)}\stackrel{*}{\rightharpoonup} W
		\quad\text{in }\bigl(L^\infty([0,T];H^1)\bigr)^2
		\]
		and
		\[
		\partial_tW^{(k)}\stackrel{*}{\rightharpoonup} V\quad\text{in }\bigl(L^\infty([0,T];L^2)\bigr)^2
		\]
		as $k\to\ity$. Passing to the limit in the distributional identity
		$\partial_tW^{(k)}=dW^{(k)}/dt$, we obtain $V=\partial_tW$.
		Hence, it holds
		\[
		W\in\left(\mathcal{C}([0,T];L^2)\cap L^\infty([0,T];H^1)\cap W^{1,\infty}([0,T];L^2)\right)^2.
		\]
		Write $W^{(k)}=(\phi^{(k)},\psi^{(k)})$ and $W=(\phi,\psi)$. For any $t\in[0,T]$ and any component $w\in\{\phi,\psi\}$, it follows that
		\[
		\|w(t)\|_{L^2(\mathbb R^n\setminus\Omega_t)}
		\leq\|w(t)-w^{(k)}(t)\|_{L^2}\to0
		\]
		as $k\to\ity$, therefore, we derive $\supp w(t,\cdot)\subset\Omega_t$. Next, the weighted bounds defining $\mathcal{B}_T$ are preserved by weak lower semicontinuity. For the $L^r$ norm, we use
		\[
		\|w^{(k)}(t)-w(t)\|_{L^r}
		\leq |\Omega_T|^{\frac1r-\frac12}
		\|w^{(k)}(t)-w(t)\|_{L^2}
		\]
		to give $\|W\|_{X(T)}\leq\varepsilon$, and hence $W\in \mathcal{B}_T$. As a consequence, $\mathcal{B}_T$ is closed in $E_T$. Since $\mathcal{B}_T$ is relatively compact in $E_T$, we conclude that $\mathcal{B}_T$ is compact in $E_T$. Moreover, for $W_1,W_2\in \mathcal{B}_T$ and $\lambda\in [0,1]$, we have
		\[
		\|\lambda W_1+(1-\lambda)W_2\|_{X(T)}\leq \lambda \|W_1\|_{X(T)}+(1-\lambda)\|W_2\|_{X(T)}\leq \varepsilon,
		\]
		which is to verify the convexity of $\mathcal{B}_T$.
	\end{proof}
	
	\noindent {\bf $\bullet$ Step 3: Application of Schauder's fixed point theorem.} Let $U=(u,v)\in \mathcal{B}_T$. Since $U\in(L^\infty([0,T];H^1(\Omega_T)))^2$, the Sobolev embedding theorem and the condition \eqref{con_G-N_Thr1} imply that 
	\[
	F_w(U)=|u|^{\alpha_w}|v|^{\beta_w}\in L^\infty([0,T];L^2(\Omega_T))\subset L^1([0,T];L^2(\Omega_T)).
	\]
	Since $\supp (F_w(U(s)))\subset\Omega_s\subset \Omega_T$, we indentify $F_w(U)$ with its zero extension outside $\Omega_T$. Consequently, we have $F_w(U)\in L^1([0,T];L^2)$. Using Lemma \ref{regularity_K} with $g=|u|^{\alpha_w}|v|^{\beta_w}$, we obtain the required regularity for the Duhamel term. The finite propagation speed of the damped wave equation gives $\supp (U^{\rm lin}(t))\subset \Omega_t$ for $t\in [0,T]$, hence $\supp \mathcal{N}_w[U](t,\cdot)\subset\Omega_t\subset \Omega_T$ for $t\in [0,T]$, with $w\in\{u,v\}$. Thus, $\mathcal{N}[U]\in X(T)\subset E_T$ for any $U\in\mathcal{B}_T$. To apply Schauder's fixed point theorem, we need to prove that $N$ is continuous in $E_T$ and satisfies the following estimate:
	\begin{equation}\label{keyestimate_Thr1globalexistence}
		\|\mathcal{N}[U]\|_{X(T)}\leq C_0\big(\|(U_0,U_1)\|_{\mathcal{D}_{r}}+\|U\|^{p+\alpha}_{X(T)}+\|U\|^{q+\beta}_{X(T)}\big).
	\end{equation}
	
	\begin{lemma}\label{lemma_G-N_Thr1globalexistence}
		Under the assumptions of Theorem \ref{Thr_1_Globalexistence}, the following estimates hold:
		\begin{align*}
			\big\||u(s,\cdot)|^{\alpha_w}|v(s,\cdot)|^{\beta_w}\big\|_{L^{\gamma}} &\lesssim  (1+s)^{-\frac{n}{2}(\alpha_w+\beta_w-\frac{1}{\gamma})+\alpha_w[\ell_u]^+} \|U\|_{X(T)}^{\alpha_w+\beta_w}
		\end{align*}
		for $w\in\{u,v\}$ and $\gamma\in \{1,2,r\}$.
	\end{lemma}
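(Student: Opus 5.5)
The plan is to use H\"older's inequality to split the product, and then the fractional Gagliardo--Nirenberg inequality to control each factor by the pieces of $\mathcal E_u$ and $\mathcal E_v$. The $u$-factor is where the growth weight $(1+s)^{[\ell_u]^+}$ enters, and since it is raised to the power $\alpha_w$, this produces exactly the term $\alpha_w[\ell_u]^+$ in the exponent.

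\textbf{Step 1: splitting the product.} Fix $w\in\{u,v\}$ and $\gamma\in\{1,2,r\}$. If $\alpha_w=0$ or $\beta_w=0$, only one factor is present and we go directly to Step 2. Otherwise we apply H\"older's inequality:
\[
\big\||u|^{\alpha_w}|v|^{\beta_w}\big\|_{L^\gamma}\le \|u\|_{L^{\gamma\alpha_w a}}^{\alpha_w}\|v\|_{L^{\gamma\beta_w b}}^{\beta_w},\qquad \frac1a+\frac1b=1 .
\]
The natural choice is
\[
a=\frac{\alpha_w+\beta_w}{\alpha_w},\qquad b=\frac{\alpha_w+\beta_w}{\beta_w},
\]
so that both factors are measured in the same space $L^{m}$ with $m:=\gamma(\alpha_w+\beta_w)$. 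Since $\alpha_w+\beta_w\ge\min\{p+\alpha,q+\beta\}>1$, we have $m>\gamma\ge1$. In every case we must also have $m\ge r$, since $L^r$ is the lowest norm available in $\mathcal E$. This holds for $\gamma=r$ and $\gamma=2$ because $r\le2$. For $\gamma=1$ we need $\alpha_w+\beta_w\ge r$, which is true because $r=\min\{2,p+\alpha,q+\beta\}$.

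\textbf{Step 2: Gagliardo--Nirenberg.} For each $z\in\{u,v\}$ we use
\[
\|z(s)\|_{L^m}\lesssim \|z(s)\|_{L^r}^{1-\theta}\,\|z(s)\|_{\dot H^1}^{\theta}.
\]
Dimensional analysis fixes $\theta$ by
\[
\frac nm=(1-\theta)\frac nr+\theta\Big(\frac n2-1\Big),
\]
which gives $\theta=n(1/r-1/m)\big/(1+n/r-n/2)$. We need $\theta\in[0,1]$, that is, $m\le 2n/(n-2)$ when $n=3$ (no upper restriction when $n\le2$).
\begin{itemize}
\item For $n=3$ the worst case is $m=2\max\{p+\alpha,q+\beta\}$, and then $m\le6$ is exactly the assumption \eqref{con_G-N_Thr1}.
\item For $n=1,2$ the inequality holds for every finite $m$.
\end{itemize}

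\textbf{Step 3: inserting the weights.} From the definition of $\|U\|_{X(T)}$ we read off
\[
\|z(s)\|_{L^r}\le (1+s)^{-\frac n2(1-\frac1r)+\delta_z}\|U\|_{X(T)},\qquad \|z(s)\|_{\dot H^1}\le(1+s)^{-\frac n4-\frac12+\delta_z}\|U\|_{X(T)},
\]
where $\delta_u=[\ell_u]^+$ and $\delta_v=0$ (for $v$ there is no extra weight in the norm). Substituting these into Step 2, the decay exponent becomes
\[
-(1-\theta)\frac n2\Big(1-\frac1r\Big)-\theta\Big(\frac n4+\frac12\Big).
\]
By the scaling relation defining $\theta$, this simplifies to $-\tfrac n2(1-\tfrac1m)$, which is the heat-type decay rate of the $L^m$ norm. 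Raising the $u$-factor to the power $\alpha_w$ and the $v$-factor to the power $\beta_w$ and adding the exponents gives
\[
-\frac n2(\alpha_w+\beta_w)\Big(1-\frac1m\Big)+\alpha_w[\ell_u]^+ = -\frac n2\Big(\alpha_w+\beta_w-\frac1\gamma\Big)+\alpha_w[\ell_u]^+,
\]
because $(\alpha_w+\beta_w)/m=1/\gamma$. This is precisely the claimed bound.

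\textbf{Main obstacle.} I expect the hardest part to be the bookkeeping of admissible exponents. One must check that $r\le m\le 2n/(n-2)$ in every case $\gamma\in\{1,2,r\}$, $w\in\{u,v\}$, including the degenerate cases $\alpha=0$ or $\beta=0$. If $m<r$ ever occurred, I would switch the interpolation endpoint from $L^r$ to $L^2$ (or use $L^1$-type data). The identification of the decay exponent as $-\tfrac n2(1-\tfrac1m)$, by contrast, is a routine scaling check.
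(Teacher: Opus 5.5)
Your proposal is correct and follows the paper's proof essentially step for step. The paper also applies H\"older's inequality with both factors in $L^{\gamma(\alpha_w+\beta_w)}$, then uses the Gagliardo--Nirenberg interpolation between $L^r$ and $\dot H^1$ with the same exponent $\theta_{r,w}$, and reads off the factor $(1+s)^{\alpha_w[\ell_u]^+}$ from the weights in $\|U\|_{X(T)}$. Your explicit checks go further than the paper's one-line remark that \eqref{con_G-N_Thr1} ensures $\theta_{r,w}\in[0,1]$: you verify that $r\le \gamma(\alpha_w+\beta_w)<6$ when $n=3$, and you verify the scaling identity for the decay exponent.
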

	\begin{proof}
		The application of H\"older's inequality and Lemma \ref{fractionalGagliardoNirenberg} yields
		\begin{align*}
			\big\||u(s,\cdot)|^{\alpha_w}|v(s,\cdot)|^{\beta_w}\big\|_{L^{\gamma}}
			&\lesssim \|u(s,\cdot)\|^{\alpha_w}_{L^{\gamma(\alpha_w+\beta_w)}}\|v(s,\cdot)\|^{\beta_w}_{L^{\gamma(\alpha_w+\beta_w)}} \\
			&\lesssim \|u(s,\cdot)\|^{\alpha_w(1-\theta_{r,w})}_{L^{r}}\|u(s,\cdot)\|^{\alpha_w\theta_{r,w}}_{\dot{H}^1}\|v(s,\cdot)\|_{L^{r}}^{\beta_w(1-\theta_{r,w})} \|v(s,\cdot)\|_{\dot{H}^{1}}^{\beta_w\theta_{r,w}}  \\
			&\lesssim (1+s)^{-\frac{n}{2}\tron{\alpha_w+\beta_w-\frac{1}{\gamma}}+\alpha_w[\ell_u]^+}  \|U\|_{X(T)}^{\alpha_w+\beta_w}
		\end{align*}
		for $w\in\{u,v\}$, where $\theta_{r,w}$ is given by
		\[
		\theta_{r,w}=\frac{\frac{1}{r}-\frac{1}{\gamma(\alpha_w+\beta_w)}}{\frac{1}{r}-\frac{1}{2}+\frac{1}{n}}.
		\]
		The condition \eqref{con_G-N_Thr1} ensures that $\theta_{r,w}\in[0,1]$. This completes the proof of Lemma \ref{lemma_G-N_Thr1globalexistence}.
	\end{proof}
	\noindent Returning to the proof of \eqref{keyestimate_Thr1globalexistence}, the estimates for $u^{\rm lin}$ and $v^{\rm lin}$ have already been established in Proposition \ref{LinearEstimates}, so it remains to prove that
	\begin{equation}\label{eqNu_Thr1}
		\|(u^{\rm nl},v^{\rm nl})\|_{X(T)}\lesssim \|U\|_{X(T)}^{p+\alpha}+ \|U\|_{X(T)}^{q+\beta}.
	\end{equation}
	First, using Lemma \ref{lemma_G-N_Thr1globalexistence} and
	Proposition \ref{LinearEstimates} with $(m_1,m_2) = (1,2)$, $(s_1, s_2) =(1,0)$ for $\tau \in [0, t/2]$, and $(m_1,m_2)=(2,2), (s_1,s_2) =(1,0)$ for $\tau \in (t/2,t]$, we obtain
	\begin{align*}
		\| w^{\rm nl}(t,\cdot)\|_{\dot{H}^1} &\lesssim \int_0^{t/2} (1+t-s)^{-\frac{n+2}{4}} \big\||u(s,\cdot)|^{\alpha_w}|v(s,\cdot)|^{\beta_w}\big\|_{L^1\cap L^2}ds\\
		&\qquad + \int_{t/2}^t (1+t-s)^{-\frac{1}{2}} \big\||u(s,\cdot)|^{\alpha_w}|v(s,\cdot)|^{\beta_w}\big\|_{L^2}ds\\
		&\lesssim  (1+t)^{-\frac{n+2}{4}} \|U\|_{X(T)}^{\alpha_w+\beta_w} \int_0^{t/2}(1+s)^{-\frac{n}{2}\tron{\alpha_w+\beta_w-1}+\alpha_w[\ell_u]^+}ds\\
		&\qquad +  (1+t)^{-\frac{n}{2}\tron{\alpha_w+\beta_w-\frac{1}{2}}+\alpha_w[\ell_u]^+} \|U\|_{X(T)}^{\alpha_w+\beta_w} \int_{t/2}^t (1+t-s)^{-\frac{1}{2}}ds.
	\end{align*}
	We use the inequalities
	\begin{equation}\label{eq_lossofdecay}
		1-n(p+\alpha-1)/2+\alpha[\ell_u]^+<[\ell_u]^+
	\end{equation}
	and 
	\begin{equation}\label{eq_condition_gt_1}
		n(q+\beta-1)/2-q[\ell_u]^+>1,
	\end{equation}
	which follow from assumption \eqref{Thr1globalexistence_Criticalcurve}. Consequently, we arrive at
	\begin{equation}\label{eq_H1_Thr1}
		\begin{aligned}
			&\|u^{\rm nl}(t,\cdot)\|_{\dot{H}^1}\lesssim (1+t)^{-\frac{n+2}{4}+[\ell_u]^+}\|U\|^{p+\alpha}_{X(T)},\quad \|v^{\rm nl}(t,\cdot)\|_{\dot{H}^1}\lesssim (1+t)^{-\frac{n+2}{4}}\|U\|^{q+\beta}_{X(T)}.
		\end{aligned}
	\end{equation}
	Next, we estimate $\|u^{\rm nl}(s,\cdot)\|_{L^{\gamma}}$ and  $\|v^{\rm nl}(s,\cdot)\|_{L^{\gamma}}$ for $\gamma\in\{2,r\}$. Since $\kappa_{r}<1$, applying Lemma \ref{lemma_G-N_Thr1globalexistence} together with Proposition \ref{LinearEstimates} with $(m_1,m_2) = (1,\gamma), (s_1, s_2) = (0,0)$ for $\tau \in [0, t/2)$ and $(m_1,m_2)=(\gamma,\gamma), (s_1, s_2) = (0,0)$ for $\tau\in[t/2,t]$, we obtain
	\begin{align*}
		\|w^{\rm nl}(t,\cdot)\|_{L^{\gamma}} &\lesssim \int_0^{t/2} (1+t-s)^{-\frac{n}{2}\tron{1-\frac{1}{\gamma}}} \big\||u(s,\cdot)|^{\alpha_w}|v(s,\cdot)|^{\beta_w}\big\|_{L^1\cap L^{\gamma}} ds\\
		&\qquad+\int_{t/2}^t \big\||u(s,\cdot)|^{\alpha_w}|v(s,\cdot)|^{\beta_w}\big\|_{L^{\gamma}} ds\\
		&\lesssim (1+t)^{-\frac{n}{2}\tron{1-\frac{1}{\gamma}}} \|U\|_{X(T)}^{\alpha_w+\beta_w} \int_0^{t/2}(1+s)^{-\frac{n}{2}\tron{\alpha_w+\beta_w-1}+\alpha_w[\ell_u]^+}ds\\
		&\quad+ (1+t)^{-\frac{n}{2}\tron{\alpha_w+\beta_w-\frac{1}{\gamma}}+\alpha_w[\ell_u]^+} \|U\|_{X(T)}^{\alpha_w+\beta_w}\int_{t/2}^tds.
	\end{align*}
	Using \eqref{eq_lossofdecay} and \eqref{eq_condition_gt_1} again, we may conclude
	\begin{equation}\label{eq_Lk_Thr1}
		\begin{aligned}
			&\|u^{\rm nl}(t,\cdot)\|_{L^{\gamma}}\lesssim (1+t)^{-\frac{n}{2}\tron{1-\frac{1}{\gamma}}+[\ell_u]^+}\|U\|^{p+\alpha}_{X(T)},\\
			&\|v^{\rm nl}(t,\cdot)\|_{L^{\gamma}}\lesssim (1+t)^{-\frac{n}{2}\tron{1-\frac{1}{\gamma}}}\|U\|^{q+\beta}_{X(T)}.
		\end{aligned}
	\end{equation}
	Finally, the terms $\partial_tu$ and $\partial_tv$ can be estimated as follows:
	\begin{align*}
		\|\partial_tw^{\rm nl}(t,\cdot)\|_{L^2} &\lesssim \int_0^{t/2} (1+t-s)^{-\frac{n}{4}-1} \big\||u(s,x)|^{\alpha_w}|v(s,x)|^{\beta_w}\big\|_{L^1\cap L^2} ds\\
		&\qquad+\int_{t/2}^t (1+t-s)^{-1}\big\||u(s,\cdot)|^{\alpha_w}|v(s,\cdot)|^{\beta_w}\big\|_{L^{2}} ds\\
		&\lesssim (1+t)^{-\frac{n}{4}-1} \|U\|_{X(T)}^{\alpha_w+\beta_w} \int_0^{t/2}(1+s)^{-\frac{n}{2}\tron{\alpha_w+\beta_w-1}+\alpha_w[\ell_u]^+}ds\\
		&\quad+ (1+t)^{-\frac{n}{2}\tron{\alpha_w+\beta_w-\frac{1}{2}}+\alpha_w[\ell_u]^+} \|U\|_{X(T)}^{\alpha_w+\beta_w}\int_{t/2}^t (1+t-s)^{-1}ds
	\end{align*}
	for $w\in\{u,v\}$, where we applied Lemma \ref{lemma_G-N_Thr1globalexistence} and Proposition \ref{LinearEstimates} with $(m_1,m_2) =(1,2), (s_1, s_2) = (0,0)$, $j=1$ for $\tau \in [0, t/2)$ and $(m_1,m_2)=(2,2), (s_1, s_2) = (0,0)$, $j=1$ for $\tau \in [t/2, t]$. Thus, \eqref{eq_lossofdecay} and \eqref{eq_condition_gt_1} imply that
	\begin{equation}\label{eq_partial_t_Thr1}
		\begin{aligned}
			&\|\partial_t u^{\rm nl}(t,\cdot)\|_{L^2}\lesssim (1+t)^{-\frac{n}{4}-1+[\ell_u]^+} \|U\|_{X(T)}^{p+\alpha},\\
			&\|\partial_t v^{\rm nl}(t,\cdot)\|_{L^2}\lesssim (1+t)^{-\frac{n}{4}-1}\|U\|_{X(T)}^{q+\beta}.
		\end{aligned}
	\end{equation}
	The estimates \eqref{eq_H1_Thr1}, \eqref{eq_Lk_Thr1} and \eqref{eq_partial_t_Thr1} complete the proof of \eqref{eqNu_Thr1}.
	\noindent By \eqref{keyestimate_Thr1globalexistence}, we may choose $\varepsilon_0>0$ sufficiently small so that, for all initial data satisfying $ \|(U_0,U_1)\|_{\mathcal{D}_{r}}\leq \varepsilon_0$, we have
	\[
	C_0\|(U_0,U_1)\|_{\mathcal{D}_{r}}\leq \varepsilon/3.
	\]
	We may also choose $\varepsilon>0$ sufficiently small so that 
	\[
	C_0\max_{w\in\{u,v\}}\{\varepsilon^{\alpha_w+\beta_w-1}\}\leq 1/3.
	\]
	Hence, $N$ maps $\mathcal{B}_T$ into itself. We next prove the continuity of $N$ in $E_T$. Let $U^{(k)}=(u^k,v^k)$ and $U=(u,v)$ such that $U^{(k)}\to U$ in $E_T$ as $k\to\ity$, with $U^{(k)},U\in\mathcal{B}_T$. We will prove that
	\[
	\sup_{0\leq t\leq T}\|w(t)\|_{\dot{H^1(\Omega_T)}}<\infty
	\]
	for $w\in \{u,v\}$. Indeed, for each $t^*\in [0,T]$, we choose a sequence $t_j\to t^*$ as $j\to \ity$. Then, $w(t_j)$ is uniform bounded in $H^1(\Omega_T)$ so there exists $v\in H^1(\Omega_T)$ such that $w(t_j){\rightharpoonup} v\text{ in }H^1(\Omega_T)$ as $j\to \ity$. Since $H^1(\Omega_T)\Subset L^2(\Omega_T)$, we have $w(t_j)\to v\text{ in }L^2(\Omega_T)$ as $j\to \ity$. On the other hand, the convergence in $\mathcal{C}([0,T];L^2(\Omega_T))$ implies $w(t_j)\to w(t^*)$ in $L^2(\Omega_T)$ as $j\to \ity$. Since the limit is unique, we obtain $w(t^*)=v$ in $L^2(\Omega_T)$. Hence, we can choose $v$ as the presentation of $w(t^*)$ in $H^1(\Omega_T)$. For any $\sigma\in [2,\infty)$ when $n=1,2$ and $\sigma \in [2,6)$ when $n=3$, the Rellich-Kondrachov theorem implies
	\[
	\sup_{0\leq t\leq T}\|w(t)\|_{\dot{L^\sigma(\Omega_T)}}\leq \sup_{0\leq t\leq T}\|w(t)\|_{\dot{H^1(\Omega_T)}}<\infty.
	\]
	From the fact that $\supp(w^{(k)}(t)),\supp(w(t))\subset \Omega_T$, employing Lemma \ref{fractionalGagliardoNirenberg} we derive
	\[
	\begin{aligned}
		\sup_{0\leq t\leq T}\|w^{(k)}(t)-w(t)\|_{L^{\sigma}(\Omega_T)}&\lesssim \tron{\sup_{0\leq t\leq T}\|w^{(k)}(t)-w(t)\|_{L^2(\Omega_T)}}^{1-\vartheta_\sigma}\\
		&\qquad\times \tron{\sup_{0\leq t\leq T}\|w^{(k)}(t)-w(t)\|_{\dot{H}^1(\Omega_T)}}^{\vartheta_\sigma},
	\end{aligned}
	\]
	where $\vartheta_\sigma=n(1/2-1/\sigma)\in [0,1]$. The second term is uniform bound while the first term tends to $0$. We deduce that
	$U^{(k)}(t)\to U(t)$ in $\bigl(L^\sigma(\Omega_T)\bigr)^2$ as $k\to\ity$ for all $t\in [0,T]$. Moreover, using Lemma \ref{fractionalGagliardoNirenberg} again, we have 
	\[
	\begin{aligned}
		\sup_{0\leq t,s\leq T}\|w(t)-w(s)\|_{L^{\sigma}(\Omega_T)}&\lesssim \tron{\sup_{0\leq t\leq T}\|w(t)-w(s)\|_{L^2}(\Omega_T)}^{1-\vartheta_\sigma}\\
		&\qquad \times \tron{\sup_{0\leq t\leq T}\|w(t)-w(s)\|_{\dot{H}^1(\Omega_T)}}^{\vartheta_\sigma}\to 0 
	\end{aligned}
	\]
	when $s\to t$. Hence, we deduce that
	\begin{equation}\label{embed_L_sigma}
		U^{(k)}(t)\to U(t)\quad\text{in}\quad \bigl(\mathcal{C}([0,T];L^\sigma(\Omega_T))\bigr)^2 \text{ as } k\to\ity,
	\end{equation}
	by choosing
	\begin{equation}\label{choice_of_z}
		\sigma:=2\max\{\alpha_w+\beta_w\},\quad w\in\{u,v\}.
	\end{equation}
	The continuity of the corresponding Nemytskii operators combined with H\"older's inequality and \eqref{embed_L_sigma} gives
	\[
	F_w\big(U^{(k)}\big)\to F_w(U) \quad\text{in}\quad L^1([0,T];L^2(\Omega_T))\text{ as } k\to\ity, \qquad w\in\{u,v\},
	\]
	where we have used the estimate
	\[
	\begin{aligned}
		\big||u^k|^{\alpha_w}|v^{(k)}|^{\beta_w}-|u|^{\alpha_w}|v|^{\beta_w}\big|&\lesssim \big||u^k|^{\alpha_w}-|u|^{\alpha_w}\big||v^{(k)}|^{\beta_w}+|u|^{\alpha_w}\big||v^{(k)}|^{\beta_w}-|v|^{\beta_w}\big|.
	\end{aligned}
	\]
	Consequently, one arrives at
	\[
	\|N\big(U^{(k)}\big)-\mathcal{N}[U]\|_{E_T}\lesssim \|F\big(U^{(k)}\big)-F(U)\|_{\bigl(L^1([0,T];L^2(\Omega_T))\bigr)^2}\to 0 \text{ as } k\to\ity.
	\]
	Applying Schauder's fixed point theorem in $E_T$ yields a fixed point $U\in\mathcal{B}_T$ of $N$ on $[0,T]$. Since $U=\mathcal{N}[U]$, the estimate \eqref{keyestimate_Thr1globalexistence} follows that
	\[
	\begin{aligned}
		\|U\|_{X(T)}&\leq C_0\|(U_0,U_1)\|_{\mathcal D_r}+C_0\Bigl(\|U\|_{X(T)}^{p+\alpha-1}+\|U\|_{X(T)}^{q+\beta-1}\Bigr)\|U\|_{X(T)}\\
		&\leq C_0\|(U_0,U_1)\|_{\mathcal D_r}+\frac{2}{3}\|U\|_{X(T)},
	\end{aligned}
	\]
	which is to give immediately $\|U\|_{X(T)}
	\leq 3C_0\|(U_0,U_1)\|_{\mathcal D_r}$. \medskip
	
	\noindent {\bf $\bullet$ Step 4: Construction of a global solution.} We now construct a global solution from the solutions on finite time intervals. For $T_k=k$, where $k\in\mathbb{N}^\ast$, there exists a solution $U^{(k)}=(u^{(k)},v^{(k)})$ to \eqref{Main_system} on $[0,k]$ such that
	\[
	\|U^{(k)}\|_{X(k)}\leq 3C_0\|(U_0,U_1)\|_{\mathcal D_r}=:C,
	\]
	where $C$ is independent of $k$ and $\supp\{u^{(k)}(t),v^{(k)}(t)\}\in \Omega_t$ for $0\leq t\leq k$. Due to the fact
	\[
	\|U^{(k)}\|_{X(1)}\leq \|U^{(k)}\|_{X(k)}\leq C\text{ for $k\geq 1$},
	\]
	it follows that
	$$\{U^{(k)}\}_{k\in \mathbb N} \text{ and }\{\partial_tU^{(k)}\}_{k\in \mathbb N} \text{ are bounded in }(L^{\infty}((0,1),H^1(\Omega_1)))^2 \text{ and }(L^{\infty}((0,1),L^2(\Omega_1)))^2, $$
	respectively. Taking $\sigma$ defined as in \eqref{choice_of_z}, by the Rellich-Kondrachov theorem, we have
	\[
	H^1(\Omega_1)\Subset L^{\sigma}(\Omega_1).
	\]
	By the Aubin--Lions--Simon theorem, there exists a subsequence $\{U^{(k^1_j)}\}_{j\in \mathbb N}$ such that
	\[
	U^{(k^1_{j})}\to U_1\text{ in }\bigl(\mathcal{C}([0,1];L^{\sigma}(\Omega_{1}))\bigr)^2 \text{ as } j\to\ity.
	\]
	Next, let us choose a subsequence $\{U^{(k^2_j)}\}_{j\in \mathbb N}$ of $\{U^{(k^1_j)}\}_{j\in \mathbb N}$ with $k^2_1\geq2$. Then $\{U^{(k^2_j)}\}_{j\in \mathbb N}$ is well defined on $[0,2]$. Repeating the above argument and relabeling the resulting subsequence, we obtain
	\[
	U^{(k^2_{j})}\to U_2\text{ in }\bigl(\mathcal{C}([0,2];L^{\sigma}(\Omega_{2}))\bigr)^2\text{ as } j\to\ity.
	\]
	Since the limit is unique, we have ${\bf 1}_{[0,1]}(t)U_2=U_1$. Proceeding in the same manner, for each $m\in\mathbb{N}^\ast$ we obtain the following chain of subsequences:
	\[
	\left\{U^{(k^1_{j})}\right\}_{j\in \mathbb N}\supset \left\{U^{(k^2_{j})}\right\}_{j\in \mathbb N} \supset\cdots\supset \left\{U^{(k^m_{j})}\right\}_{j\in \mathbb N}\supset \cdots
	\]
	and 
	\[
	U^{(k^m_{j})}\to U_{m} \text{ in }\bigl(\mathcal{C}([0,m];L^{\sigma}(\Omega_{m}))\bigr)^2\text{ as }j\to\ity
	\]
	such that ${\bf1}_{[0,\tilde{m}]}(t)U_m=U_{\widetilde{m}}$ for all $1\leq\widetilde{m}\leq m$. Define the diagonal subsequence by $U^{(n_j)}=U^{k^j_j}$. Then there exists $U=(u,v)$ such that $\supp\{u(t),v(t)\}\subset \Omega_{m}$ for $0\leq t\leq m$ and
	\begin{equation}\label{converge_C_Lq}
		U^{(n_j)}\to U\text{ in }\bigl(\mathcal{C}([0,m];L^{\sigma}(\Omega_m)\bigr)^2\hookrightarrow \bigl(\mathcal{C}([0,m];L^{\sigma}\bigr)^2
	\end{equation}
	as $j\to\ity$ on any finite interval $[0,m]$, where $m\in\mathbb{N}^\ast$. We now show that $U$ is a global (in time) solution to \eqref{Main_system}. For each $m\in\mathbb N^\ast$, we have 
	\[
	\sup_{j\in \mathbb N:\,n_j\geq m}\|U^{(n_j)}\|_{X(m)}\leq \varepsilon.
	\]
	Therefore, one finds
	\begin{align*}
		U^{(n_j)}\stackrel{*}{\rightharpoonup} U\text{ in }(L^\infty([0,m],H^1))^2
	\end{align*}
	as $j\to\ity$ and there exists $V$ such that $\partial_tU^{(n_j)}\stackrel{*}{\rightharpoonup} V\text{ in }(L^\infty([0,m],L^2))^2$ as $j\to\ity$. Since $\sigma\geq2$ and $\Omega_m$ has finite measure, from \eqref{converge_C_Lq} it implies $U^{(n_j)}\to U$ in $\bigl(\mathcal{C}([0,m];L^2)\bigr)^2$ as $j\to\ity$. Hence, $V=\partial_tU$ in the weak sense. Moreover, for each $w\in\{u,v\}$ it holds $2(\alpha_w+\beta_w)\leq \sigma$. Therefore, again using that $\Omega_m$ has finite measure, \eqref{converge_C_Lq} yields $w^{(n_j)}\to w$ in $\mathcal{C}\big([0,m];L^{2(\alpha_w+\beta_w)}\big)$ as $j\to \ity$. Consequently, one arrives at
	\begin{equation}\label{integral_in_L1}
		\big|u^{(n_j)}\big|^{\alpha_w}\big|v^{(n_j)}\big|^{\beta_w}\to |u|^{\alpha_w}|v|^{\beta_w}\text{ in }L^1([0,m];L^2)\text{ as }j\to\ity, \text{ for }w\in\{u,v\}.
	\end{equation}
	For each $n_j$ and $w\in\{u,v\}$, we have
	\[
	w^{(n_j)}(t,x)=w^{\rm lin}(t,x)+\int_{0}^{t}\mathcal{K}(t-s,x)\ast_x\bigl(|u^{(n_j)}(s,x)|^{\alpha_w}|v^{(n_j)}(s,x)|^{\beta_w}\bigr)\,ds,\qquad t\in[0,m].
	\]
	Define
	\[
	\widetilde w(t,x):=w^{\rm lin}(t,x)+\int_{0}^{t}\mathcal{K}(t-s,x)\ast_x\bigl(|u(s,x)|^{\alpha_w}|v(s,x)|^{\beta_w}\bigr)\,ds
	\]
	and set $z_w^{(n_j)}:=w^{(n_j)}-\widetilde w$. Then, $z_w^{(n_j)}$ is the weak solution to
	\[
	\left\{
	\begin{aligned}
		&\partial_t^2z_w^{(n_j)}-\Delta z_w^{(n_j)}+\partial_tz_w^{(n_j)}=F_w(U^{(n_j)})-F_w(U),&&x\in\mathbb R^n,\ t>0,\\
		&z_w^{(n_j)}(0,x)=\partial_tz_w^{(n_j)}(0,x)=0,&&x\in\mathbb R^n,
	\end{aligned}
	\right.
	\]
	whose source term belongs to $L^1([0,m];L^2)\hookrightarrow
	L^1([0,m];H^{-1})$. The application of Lemma \ref{regularity_K} yields
	\[
	\sup_{0\leq t\leq m} \Big(\|w^{(n_j)}(t)-\widetilde w(t)\|_{H^1}+\|\partial_tw^{(n_j)}(t)-\partial_t\widetilde w(t)\|_{L^2}\Big)\lesssim \|F_w(U^{(n_j)})-F_w(U)\|_{L^1([0,m];L^2)}\to 0
	\]
	as $j\to\infty$, by \eqref{integral_in_L1}. Thus, it follows that $w^{(n_j)}\to \widetilde w$ and $\partial_tw^{(n_j)}\to \partial_t\widetilde w$ in $L^\infty([0,m];H^1)$ and $L^\infty([0,m];L^2)$, respectively, as $j\to \ity$. By Lemma \ref{regularity_K} and \eqref{integral_in_L1}, we have 
	\[
	\widetilde w\in \mathcal{C}([0,m];H^1)\cap \mathcal{C}^1([0,m];L^2).
	\]
	Since $H^1\hookrightarrow L^2$, it holds $w^{(n_j)}\to \widetilde w$ in $L^\infty([0,m];L^2)$ as $j\to \ity$. Since $w^{(n_j)}-\widetilde w\in \mathcal{C}([0,m];L^2)$, the essential supremum above coincides with the usual supremum, i.e. $w^{(n_j)}\to \widetilde w$ in $\mathcal{C}([0,m];L^2)$ as $j\to \ity$. On the other hand, one sees $w^{(n_j)}\to w$ in $\mathcal{C}([0,m];L^2)$ as $j\to \ity$. By uniqueness of the limit, we get $w=\widetilde w$ in $\mathcal{C}([0,m];L^2)$. By identifying $w$ with the representative $\widetilde w$, we gain
	\[
	U\in \left(\mathcal{C}([0,m];H^1)\cap \mathcal{C}^1([0,m];L^2)\right)^2
	\]
	and $U$ satisfies \eqref{duhamel_formula}. Since $m$ is arbitrary, one finds that
	\[
	U\in \bigl(\mathcal{C}([0,\infty);H^1)\cap \mathcal{C}^1([0,\infty);L^2)\bigr)^2
	\]
	is a global (in time) mild solution of \eqref{Main_system}. To verify $U\in(\mathcal{C}([0,\infty);L^r))^2$, we notice that
	\[
	\|w^{(n_j)}\|_{L^r}\lesssim \|U^{(n_j)}\|_{X(m)}<\infty
	\]
	for $w\in \{u,v\}$ and for arbitrary $m\geq 0$. Passing to a subsequence if needed, we have
	\[
	U^{(n_j)}=(u^{(n_j)},v^{(n_j)})\stackrel{*}{\rightharpoonup} V=(\overline{u},\overline{v})\text{ in }(L^\infty([0,m];L^r))^2\text{ as }j\to\ity.
	\]
	Thus for any $\Phi(t,x)\in \mathcal{C}^\infty_{\rm c}([0,m]\times \mathbb{R}^n)$, it holds
	\[
	\int_{0}^m\int_{\mathbb{R}^n}w^{(n_j)}\Phi(t,x)dxdt\to \int_{0}^m\int_{\mathbb{R}^n}\overline{w}\Phi(t,x)dxdt\text{ as }j\to\ity.
	\]
	The strong convergent in $L^\infty([0,m],L^2)$ implies that 
	\[
	\int_{0}^m\int_{\mathbb{R}^n}w^{(n_j)}\Phi(t,x)dxdt\to \int_{0}^m\int_{\mathbb{R}^n}w\Phi(t,x)dxdt\text{ as }j\to \ity.
	\]
	Since the limit is unique and $w,\overline{w}\in L^1_{\rm loc}([0,m]\times \mathbb{R}^n)$, by the fundamental lemma of calculus of variations, we obtain $w=\overline{w}$ a.e. in $[0,m]\times \mathbb{R}^n$ for $w\in \{u,v\}$. Hence, we get
	\[
	U^{(n_j)}\stackrel{*}{\rightharpoonup} U\text{ in }(L^\infty([0,m];L^r))^2\text{ as }j\to\ity
	\]
	for arbitrary $m\geq 0$ and $\supp (w(t,\cdot)-w(s,\cdot))\subset \Omega_{\max\{t,s\}}$ for $w\in\{u,v\}$ and $0\leq t,s\leq m$. For $1<r\leq 2$, we get
	\[
	\|w(t)-w(s)\|_{L^r}\leq |\Omega_{\max\{t,s\}}|^{1/r-1/2}\|w(t)-w(s)\|_{L^2}
	\]
	for $t,s\in[0,m]$, which is to conclude the desired result by letting $s\to t$. Finally, for every $m\in \mathbb N^*$, the weak-$*$ lower semicontinuity of the weighted $L^\infty$ norm gives
	\[
	\|U\|_{X(m)}\leq \liminf_{j\to\ity}\|U^{(n_j)}\|_{X(m)}\leq 3C_0\|(U_0,U_1)\|_{\mathcal{D}_r}.
	\]
	Since the right hand side is independent of $m$, we obtain $\|U\|_{X(\infty)}\leq 3C_0\|(U_0,U_1)\|_{\mathcal{D}_r}$. Thus, $U$ satisfies the decay estimates in Theorem \ref{Thr_1_Globalexistence}.
	\subsection{Proof of Theorem \ref{Thr_1_Globalexistence} without the compact-support assumption}
	Let us divide the proof into some steps as follows.\medskip
	
	\noindent
	\textbf{Step 1: Approximation of the initial data and uniform bounds.}
	Assume that $(U_0,U_1)\in\mathcal D_r$ and $ \|(U_0,U_1)\|_{\mathcal D_r}\leq\varepsilon_0/2$. Since $\mathcal{C}^\infty_{\rm c}$ is dense in all the spaces appearing in $\mathcal D_r$, there exists a sequence
	\[
	(U_0^k,U_1^k)\in\bigl(\mathcal{C}^\infty_{\rm c}\bigr)^2\times\bigl(\mathcal{C}^\infty_{\rm c}\bigr)^2
	\]
	such that
	\begin{equation}\label{converge_initialdata}
		(U_0^k,U_1^k)\to (U_0,U_1)\qquad\text{in }\mathcal D_r \text{ as }k\to \ity.
	\end{equation}
	Without loss of generality, we may assume that $\|(U_0^k,U_1^k)\|_{\mathcal D_r}\leq\varepsilon_{0}$ for any $k\in\mathbb N$. By the compact-support version of the global existence theorem, for each $k\in\mathbb N$, there exists a global mild solution $U^{(k)}=(u^k,v^{(k)})$. Moreover, we get
	\begin{equation}\label{remove_compact_eq0}
		\|U^{(k)}\|_{X(\infty)}\leq 3C_0\|(U_0^k,U_1^k)\|_{\mathcal D_r}\leq C\varepsilon_{0},
	\end{equation}
	where the constant $C$ is independent of $k$ and of the radius of the support of $(U_0^k,U_1^k)$. Fix $T>0$. It follows from \eqref{remove_compact_eq0} that $\{U^{(k)}\}_{k\in\mathbb N}$, $\{\partial_tU^{(k)}\}_{k\in\mathbb N}$ are bounded in
	$$\bigl(L^\infty([0,T];H^1)\bigr)^2 \cap \bigl(L^\infty([0,T];L^r)\bigr)^2 \text{ and }\bigl(L^\infty([0,T];L^2)\bigr)^2, $$
	respectively. After passing to a subsequence, there exist $U^{H}=(u^H,v^H)$, $U^{r}=(u^r,v^r)$ and $V=(v^1,v^2)$ such that
	\begin{align}
		&U^{(k)}\stackrel{*}{\rightharpoonup} U^{(H)}\quad\text{in }\bigl(L^\infty([0,T];H^1)\bigr)^2 ,\label{remove_compact_eq1}\\
		&U^{(k)}\stackrel{*}{\rightharpoonup}U^{(r)}\quad\text{in }\bigl(L^\infty([0,T];L^r)\bigr)^2 ,\label{remove_compact_eq2}\\
		&\partial_tU^{(k)}\stackrel{*}{\rightharpoonup} V\quad\text{in }\bigl(L^\infty([0,T];L^2)\bigr)^2, \label{remove_compact_eq3}
	\end{align}
	as $k\to \ity$. Let us now identify the first two weak-$*$ limits. For any $\Psi\in \mathcal{C}^\infty_{\rm c}((0,T)\times\mathbb R^n)$, since $\Psi$ belongs to both
	\[
	\bigl(L^1([0,T];H^{-1})\bigr)^2 \quad\text{and}\quad \bigl(L^1([0,T];L^{r'})\bigr)^2\quad\text{ with } r'=\frac{r}{r-1},
	\]
	the convergence in \eqref{remove_compact_eq1} and
	\eqref{remove_compact_eq2} implies
	\[
	\int_0^T\int_{\mathbb R^n}w^{(k)}\cdot\Psi\,dx\,dt
	\to
	\int_0^T\int_{\mathbb R^n}w^{(H)}\cdot\Psi\,dx\,dt 
	\]
	and
	\[
	\int_0^T\int_{\mathbb R^n}w^{(k)}\cdot\Psi\,dx\,dt
	\to
	\int_0^T\int_{\mathbb R^n}w^{(r)}\cdot\Psi\,dx\,dt
	\]
	as $k\to \ity$ for $w\in \{u,v\}$. Thus, it follows
	\[
	U^{(H)}=U^{(r)}
	\quad\text{in }
	\bigl(\mathcal D'((0,T)\times\mathbb R^n)\bigr)^2,
	\]
	and therefore, it is still valid a.e on $(0,T)\times\mathbb R^n$. We denote this common limit by $U$. Similarly, we can prove that $V=\partial_tU$ in the weak sense. \medskip
	
	\noindent \textbf{Step 2: Compactness and convergence of the nonlinear terms.} Let $M>0$. The restrictions of $\{U^{(k)}\}_{k\in \mathbb N}$ to $B_M:=\{x\in \mathbb R^n:|x|\leq M\}$ are uniformly bounded in
	\[
	\bigl(L^\infty([0,T];H^1(B_M))\bigr)^2,
	\]
	whereas $\{\partial_tU^{(k)}\}_{k\in \mathbb N}$ is uniformly bounded in
	\[
	\bigl(L^\infty([0,T];L^2(B_M))\bigr)^2.
	\]
	By the Rellich-Kondrachov theorem, it holds $H^1(B_M)\Subset L^{\sigma}(B_M)$. The Aubin-Lions-Simon compactness theorem implies that, after passing to a subsequence, there exists
	\[
	U_M\in\bigl(\mathcal{C}([0,T];L^{\sigma}(B_M))\bigr)^2
	\]
	such that
	\[
	U^{(k)}\to U_M
	\quad\text{in }\bigl(\mathcal{C}([0,T];L^{\sigma}(B_M))\bigr)^2.
	\]
	We now identify this local strong limit with the restriction of the weak-$*$ limit obtained in Step 1. Indeed, the relation \eqref{remove_compact_eq1} implies
	\[
	U^{(k)}\rightharpoonup {\bf 1}_{B_M}(x)U
	\quad\text{in }\bigl(L^2([0,T];L^2(B_M))\bigr)^2\text{ as }k\to \ity.
	\]
	To establish this, taking any $\Psi\in\bigl(L^2([0,T];L^2(B_M))\bigr)^2$ we may extend it by zero outside $B_M$. Since $T<\infty$ and $L^2\hookrightarrow H^{-1}$, the extension belongs to 
	\[
	\bigl(L^1([0,T];H^{-1})\bigr)^2,
	\]
	so that the preceding weak convergence follows directly from \eqref{remove_compact_eq1}. On the other hand, the strong convergence to $U_M$ implies weak convergence to $U_M$ in the same space. By uniqueness of the weak limit, one has
	\[
	U_M={\bf 1}_{B_M}(x)U\quad\text{in }\bigl(L^2([0,T];L^2(B_M))\bigr)^2.
	\]
	As a result, we choose $U_M$ as the continuous representative of ${\bf 1}_{B_M}(x)U$. Moreover, if $0<M_1<M_2$, then it holds
	\[
	U_{M_1}={\bf 1}_{B_{M_1}}(x)U_{M_2}
	\quad\text{in }\bigl(L^2([0,T];L^2(B_{M_1}))\bigr)^2.
	\]
	Since both sides belong to 
	\[
	\bigl(\mathcal{C}([0,T];L^2(B_{M_1}))\bigr)^2,
	\]
	the equality is valid for any $t\in[0,T]$. Next, let us apply a diagonal argument with respect to $T,M\in\mathbb N^*$. We first have the following chain of subsequences:
	\[
	\left\{U^{k^1_{j}}\right\}_{j\in \mathbb N}\supset \left\{U^{k^2_{j}}\right\}_{j\in \mathbb N} \supset\cdots\supset \left\{U^{k^m_{j}}\right\}_{j\in \mathbb N}\supset \cdots
	\]
	where
	\[
	U^{k^i_j}\to U \text{ in }(\mathcal{C}[0,i];L^\sigma(B_i))^2\text{ as }j\to \ity,\quad\forall i\in \mathbb N^*.
	\]
	Using the same compatibility argument on overlapping time intervals and spatial balls, we may choose the diagonal subsequence, still denoted by $\{U^{(k)}\}_{j\in \mathbb N}$, and a single locally continuous representative of $U$ such that
	\begin{equation}\label{remove_compact_eq4}
		U^{(k)}\to U
		\quad\text{in }\bigl(\mathcal{C}([0,T];L^{\sigma}(B_M))\bigr)^2\text{ as }k\to \ity
	\end{equation}
	for any $T,M>0$. Now, we identify the weak limit of the time derivatives. Let $\varphi\in \mathcal{C}^\infty_{\rm c}(0,T)$ and $\zeta\in \mathcal{C}^\infty_{\rm c}$. For any $k\in\mathbb N$, one sees
	\[
	\int_0^T\langle U^{(k)}(t),\zeta\rangle\varphi'(t)\,dt=-\int_0^T\langle\partial_tU^{(k)}(t),\zeta\rangle\varphi(t)\,dt.
	\]
	Passing to the limit using \eqref{remove_compact_eq3} and \eqref{remove_compact_eq4}, we obtain
	\[
	\int_0^T\langle U(t),\zeta\rangle\varphi'(t)\,dt=-\int_0^T\langle V(t),\zeta\rangle\varphi(t)\,dt,
	\]
	i.e. $V=\partial_tU$ in the sense of distributions and
	\[
	\partial_tU\in \bigl(L^\infty([0,T];L^2)\bigr)^2.
	\]
	Since $2(\alpha_w+\beta_w)\leq \sigma$ with $w\in\{u,v\}$, linking of the convergence \eqref{remove_compact_eq4}, the finite measure of $B_M$ and the continuity of the corresponding Nemytskii operators we deduce
	\begin{equation}\label{remove_compact_eq6}
		F_w\big(U^{(k)}\big)\to  F_w(U)\quad\text{in }L^1([0,T];L^2(B_M))\text{ as }k\to \ity,\qquad w\in\{u,v\}.
	\end{equation}
	Moreover, by the Sobolev embedding we have
	\[
	\begin{aligned}
		\|F_w(U(t))\|_{L^2}\leq\|u(t)\|_{L^{2(\alpha_w+\beta_w)}}^{\alpha_w}\|v(t)\|_{L^{2(\alpha_w+\beta_w)}}^{\beta_w}\lesssim\|u(t)\|_{H^1}^{\alpha_w}\|v(t)\|_{H^1}^{\beta_w},
	\end{aligned}
	\]
	where we use $\alpha_w+\beta_w<3$ when $n=3$. It follows that
	\[
	F_w(U)\in L^\infty([0,T];L^2)
	\subset L^1([0,T];L^2).
	\]
	
	\medskip
	\noindent
	\textbf{Step 3: Passing to the limit and recovering the mild formulation.}
	For any $k\in\mathbb N$ and $w\in\{u,v\}$, the representation formula of mild solutions gives
	\[
	\begin{aligned}
		w^{(k)}(t)
		&=\bigl(\mathcal K(t)+\partial_t\mathcal K(t)\bigr)*w_0^k
		+\mathcal K(t)*w_1^k
		+\int_0^t\mathcal K(t-s)*F_w\big(U^{(k)}(s)\big)\,ds.
	\end{aligned}
	\]
	The linear part is the energy solution, and hence a distributional
	solution, to the homogeneous damped wave equation with initial data
	$(w_0^k,w_1^k)$. Since $U^{(k)}$ is a mild solution, it holds
	\[
	F_w\big(U^{(k)}\big)\in L^1([0,T];L^2).
	\]
	After applying Lemma \ref{regularity_K} with $g=F_w\big(U^{(k)}\big)$, we conclude that the Duhamel term is a distributional solution of the inhomogeneous equation with source $F_w\big(U^{(k)}\big)$ and zero initial data. Consequently, $w^{(k)}$ is a distributional solution to
	\[
	\partial_t^2w^{(k)}-\Delta w^{(k)}+\partial_tw^{(k)}=F_w\big(U^{(k)}\big)
	\]
	with initial data $(w_0^k,w_1^k)$. For any $\Phi\in \mathcal{C}^\infty_{\rm c}([0,T)\times\mathbb R^n)$, we achieve
	\begin{equation}\label{remove_compact_eq7}
		\begin{aligned}
			\int_0^T\int_{\mathbb R^n}\Bigl(-\partial_tw^{(k)}\,\partial_t\Phi+\nabla w^{(k)}\cdot\nabla\Phi+\partial_tw^{(k)}\,\Phi-F_w\big(U^{(k)}\big)\Phi\Bigr)\,dx\,dt=\int_{\mathbb R^n}w_1^k(x)\Phi(0,x)\,dx.
		\end{aligned}
	\end{equation}
	Since $\Phi$ has compact support, there exists $M_\Phi>0$ such that $\operatorname{supp}\Phi\subset [0,T)\times B_{M_\Phi}$. Fix $M\geq M_\Phi$, by \eqref{remove_compact_eq6}, one has
	\[
	\left|\int_0^T\int_{\mathbb R^n}\bigl(F_w\big(U^{(k)}\big)-F_w(U)\bigr)\Phi\,dx\,dt\right|\leq \|F_w\big(U^{(k)}\big)-F_w(U)\|_{L^1([0,T];L^2(B_M))}\|\Phi\|_{L^\infty([0,T];L^2(B_M))}\to 0
	\]
	as $k\to\ity$. Furthermore, the weak-$*$ convergence in \eqref{remove_compact_eq1} and \eqref{remove_compact_eq3} together with the identity $V=\partial_tU$ in Step 2 implies
	\begin{align*}
		\int_0^T\int_{\mathbb R^n}\nabla w^{(k)} \cdot\nabla\Phi\,dx\,dt &\to \int_0^T\int_{\mathbb R^n}\nabla w\cdot\nabla\Phi\,dx\,dt, \\
		\int_0^T\int_{\mathbb R^n}\partial_tw^{(k)}\,\partial_t\Phi\,dx\,dt &\to \int_0^T\int_{\mathbb R^n}\partial_tw\,\partial_t\Phi\,dx\,dt, \\
		\int_0^T\int_{\mathbb R^n}\partial_tw^{(k)}\,\Phi\,dx\,dt &\to \int_0^T\int_{\mathbb R^n} \partial_tw\,\Phi\,dx\,dt,
	\end{align*}
	as $k\to \ity$. The convergence of the initial data also gives
	\[
	\int_{\mathbb R^n}w_1^{(k)}(x)\Phi(0,x)\,dx\to \int_{\mathbb R^n}w_1(x)\Phi(0,x)\,dx\text{ as }k\to \ity.
	\]
	Passing to the limit in \eqref{remove_compact_eq7}, we obtain
	\[
	\begin{aligned}
		\int_0^T\int_{\mathbb R^n}\Bigl(-\partial_tw\,\partial_t\Phi+\nabla w\cdot\nabla\Phi+\partial_tw\,\Phi-F_w(U)\Phi\Bigr)\,dx\,dt=\int_{\mathbb R^n}w_1(x)\Phi(0,x)\,dx.
	\end{aligned}
	\]
	We next identify the initial displacement. For any fixed $M>0$, the relation \eqref{remove_compact_eq4} with $\sigma\geq2$ entails $w^{(k)}(0)\to  w(0)$ in $L^2(B_M)$ as $k\to \ity$. On the other hand, by \eqref{converge_initialdata} it holds $w^{(k)}(0)=w_0^{(k)}\to  w_0$ in $L^2(B_M)$ as $k\to \ity$. By uniqueness of the limit, $w(0)=w_0$ in $L^2(B_M)$. Since $M>0$ is arbitrary and both functions belong to
	$L^2$, it follows that $w(0)=w_0$ in $L^2$. The standard weak-continuity theorem for energy solutions implies that $t\mapsto w(t)$ is weakly continuous in $H^1$ and $t\mapsto \partial_tw(t)$ is weakly continuous in $L^2$. Consequently, the traces $w(0)\in H^1$ and $\partial_tw(0)\in L^2$ are well defined. The weak $H^1$-trace of $w$ at $t=0$ has the same image in $L^2$ as this strong $L^2$-trace. Consequently, one claims that
	$$w(0)=w_0 \quad\text{in } H^1. $$
	Next, let identify the initial velocity. Since $w\in L^\infty(0,T;H^1)$, $\partial_tw\in L^\infty(0,T;L^2)$ and $F_w(U)\in L^1(0,T;L^2)$, the distributional equation yields
	\[
	\partial_t^2w=\Delta w-\partial_tw+F_w(U)\in L^1(0,T;H^{-1}),
	\]
	i.e.
	\[
	\partial_tw\in
	W^{1,1}(0,T;H^{-1})
	\cap L^\infty(0,T;L^2).
	\]
	Using the distributional equation and integrating by parts in time, we obtain
	\[
	\begin{aligned}
		\int_0^T\int_{\mathbb R^n}\Bigl(-\partial_tw\,\partial_t\Phi+\nabla w\cdot\nabla\Phi+\partial_tw\,\Phi-F_w(U)\Phi\Bigr)\,dx\,dt=\int_{\mathbb R^n}\partial_tw(0,x)\Phi(0,x)\,dx
	\end{aligned}
	\]
	for any $\Phi\in \mathcal{C}^\infty_{\rm c}([0,T)\times\mathbb R^n)$. Comparing this identity with the weak formulation obtained above, we find
	\[
	\int_{\mathbb R^n}\bigl(\partial_tw(0,x)-w_1(x)\bigr)\Phi(0,x)\,dx=0.
	\]
	Taking $\Phi(t,x)=\eta(t)\psi(x)$, where $\eta\in \mathcal{C}^\infty_{\rm c}([0,T)),\eta(0)=1,\psi\in \mathcal{C}^\infty_{\rm c}$, we conclude that
	\[
	\int_{\mathbb R^n}\bigl(\partial_tw(0,x)-w_1(x)\bigr)\psi(x)\,dx=0
	\]
	for any $\psi\in \mathcal{C}^\infty_{\rm c}$. Hence, we get
	\[
	\partial_tw(0)=w_1
	\quad\text{in }L^2.
	\]
	Therefore,
	\[
	w\in L^\infty([0,T];H^1)\cap W^{1,\infty}([0,T];L^2)
	\]
	is a distributional solution, belonging to the energy class, to the linear damped wave equation with source $F_w(U)$ and initial data $(w_0,w_1)$. \medskip
	
	Let us now recover the mild formulation and the strong energy regularity.
	For each $w\in\{u,v\}$, define
	\[
	\begin{aligned}
		\widetilde w(t)
		&:=\bigl(\mathcal K(t)+\partial_t\mathcal K(t)\bigr)*w_0
		+\mathcal K(t)*w_1
		+\int_0^t\mathcal K(t-s)*F_w(U(s))\,ds.
	\end{aligned}
	\]
	By Step 2, it holds $F_w(U)\in L^1([0,T];L^2)$. The homogeneous linear part belongs to
	\[
	\mathcal{C}([0,T];H^1)
	\cap \mathcal{C}^1([0,T];L^2)
	\]
	and is a distributional solution to the homogeneous damped wave equation with initial data $(w_0,w_1)$. Applying Lemma \ref{regularity_K} with $g=F_w(U)$ to the Duhamel term, we obtain
	\[
	\widetilde w\in \mathcal{C}([0,T];H^1)
	\cap \mathcal{C}^1([0,T];L^2).
	\]
	Moreover, $\widetilde w$ is a distributional solution to $\partial_t^2\widetilde w-\Delta\widetilde w
	+\partial_t\widetilde w=F_w(U)$ with initial data $\widetilde w(0)=w_0$, $\partial_t\widetilde w(0)=w_1$. On the other hand, the first part of this step shows that $w$ is a distributional solution of the same equation with the same initial data. Setting $h_w:=w-\widetilde w$ we observe that
	\[
	h_w\in L^\infty([0,T];H^1)\cap W^{1,\infty}([0,T];L^2)
	\]
	and $h_w$ satisfies $\partial_t^2h_w-\Delta h_w+\partial_th_w=0$ in the sense of distributions. In particular, one recognizes
	\[
	\partial_t^2h_w=\Delta h_w-\partial_th_w
	\in L^\infty(0,T;H^{-1}).
	\]
	and 
	\[
	h_w(0)=w(0)-\widetilde w(0)=0
	\;\text{ in }H^1,\quad \partial_th_w(0)=\partial_tw(0)-\partial_t\widetilde w(0)=0\;\text{ in }L^2.
	\]
	We may now apply the standard time-regularization argument by multiplying both sides of the equation by $\partial_th_w$. More precisely, we obtain
	\[
	\begin{aligned}
		&\frac12\|\partial_th_w(t)\|_{L^2}^2+\frac12\|\nabla h_w(t)\|_{L^2}^2+\int_0^t\|\partial_th_w(s)\|_{L^2}^2\,ds=\frac12\Big(\|\partial_th_w(0)\|_{L^2}^2+ \|\nabla h_w(0)\|_{L^2}^2\Big)=0
	\end{aligned}
	\]
	for any $t\in[0,T]$. Therefore, $\partial_th_w=0$ and $\nabla h_w=0$ a.e on $[0,T]\times\mathbb R^n$. Since 
	\[
	h_w\in W^{1,\infty}([0,T];L^2)
	\]
	and $h_w(0)=0$, we have
	\[
	h_w(t)=h_w(0)+\int_0^t\partial_th_w(s)\,ds=0 \quad\text{in }L^2
	\]
	for any $t\in[0,T]$. Consequently, it holds
	\[
	w=\widetilde w \quad\text{in }L^\infty([0,T];H^1)\cap W^{1,\infty}([0,T];L^2).
	\]
	Thus, $\widetilde w$ is a continuous energy representative of $w$. Identifying $w$ with this representative, we obtain
	\[
	w\in \mathcal{C}([0,T];H^1) \cap \mathcal{C}^1([0,T];L^2).
	\]
	From the presentation formula
	\[
	\begin{aligned}
		w(t)&=\bigl(\mathcal K(t)+\partial_t\mathcal K(t)\bigr)*w_0+\mathcal K(t)*w_1+\int_0^t\mathcal K(t-s)*F_w(U(s))\,ds,\qquad w\in\{u,v\},
	\end{aligned}
	\]
	one has
	\begin{equation}\label{remove_compact_eq10}
		U\in\left(\mathcal{C}([0,T];H^1)\cap \mathcal{C}^1([0,T];L^2)\right)^2
	\end{equation}
	and $U$ satisfies the mild formulation on $[0,T]$.
	
	\noindent
	\textbf{Step 4. Continuity in $L^r$ and passage of the decay estimates.}
	Set
	\[
	Y_r:=L^1\cap L^r,\qquad\|f\|_{Y_r}:=\|f\|_{L^1}+\|f\|_{L^r}.
	\]
	By the same Gagliardo--Nirenberg estimates used in the proof of the nonlinear estimates, with $\gamma=1$ and $\gamma=r$, we have $F_w(U)\in L^\infty([0,T];Y_r)\subset L^1([0,T];Y_r)$ for each $w\in\{u,v\}$. Let $S_1(t)f:=\mathcal{K}(t)*f$. The linear estimates in Proposition \ref{LinearEstimates} imply that
	\begin{equation}\label{remove_compact_eq8}
		\sup_{0\leq \theta\leq T}\|S_1(\theta)\|_{\mathcal L(Y_r,L^r)}\leq C_T.
	\end{equation}
	Moreover, for any $f\in Y_r$ one has
	\begin{equation}\label{remove_compact_eq9}
		S_1(\theta)f\to S_1(\theta_0)f\quad\text{in }L^r\quad\text{as }\theta\to \theta_0.
	\end{equation}
	Indeed, let first $f\in \mathcal{C}^\infty_{\rm c}$. Applying Proposition \ref{LinearEstimates} with $\rho=q=r,s_1=s_2=0,j=1$, we obtain
	\[
	\sup_{0\leq \theta\leq T}\|\partial_\theta S_1(\theta)f\|_{L^r}
	\le C_{T,f}
	\]
	for any fixed $T>0$. Therefore, for $\theta,\theta_0\in[0,T]$ one claims that
	\[
	\|S_1(\theta)f-S_1(\theta_0)f\|_{L^r}\leq C_{T,f}|\theta-\theta_0|,
	\]
	which proves \eqref{remove_compact_eq9} for smooth $f$. For a general $f\in Y_r$, let us choose $f_m\in \mathcal{C}^\infty_{\rm c}$ such that $f_m\to f$ in $Y_r$. By \eqref{remove_compact_eq8}, one obtains
	\[
	\begin{aligned}
		\|S_1(\theta)f-S_1(\theta_0)f\|_{L^r}\leq 2C_T\|f-f_m\|_{Y_r}+\|S_1(\theta)f_m-S_1(\theta_0)f_m\|_{L^r}.
	\end{aligned}
	\]
	Letting first $\theta\to \theta_0$ and then $m\to\infty$, we arrive at \eqref{remove_compact_eq9}. For $w\in\{u,v\}$, considering
	\[
	w^{\mathrm{nl}}(t):=\int_0^tS_1(t-s)F_w(U(s))\,ds
	\]
	we write
	\[
	\begin{aligned}
		w^{\mathrm{nl}}(t)-w^{\mathrm{nl}}(\tau)&=\int_0^\tau \bigl(S_1(t-s)-S_1(\tau-s)\bigr)F_w(U(s))\,ds+\int_\tau^t S_1(t-s)F_w(U(s))\,ds\\
		&=:I_1(t,\tau)+I_2(t,\tau)
	\end{aligned}
	\]
	with $t>\tau$. By \eqref{remove_compact_eq8}, one derives
	\[
	\|I_2(t,\tau)\|_{L^r}\leq C_T\int_\tau^t \|F_w(U(s))\|_{Y_r}\,ds\to 0
	\]
	as $t\to\tau^+$. For a.e $s\in(0,\tau)$, the relation \eqref{remove_compact_eq9} yields
	\[
	\bigl(S_1(t-s)-S_1(\tau-s)\bigr)F_w(U(s))\to 0 \quad\text{in }L^r
	\]
	and
	\[
	\begin{aligned}
		\left\|\bigl(S_1(t-s)-S_1(\tau-s)\bigr)F_w(U(s))\right\|_{L^r}\leq 2C_T\|F_w(U(s))\|_{Y_r},
	\end{aligned}
	\]
	since the function on the right-hand side belongs to $L^1(0,T)$. Therefore, the dominated convergence theorem for Bochner integrals therefore gives $\|I_1(t,\tau)\|_{L^r}\to 0$. The case $t<\tau$ is treated similarly. At $\tau=0$, we have
	\[
	\|w^{\mathrm{nl}}(t)\|_{L^r}\leq C_T\int_0^t\|F_w(U(s))\|_{Y_r}\,ds\to 0
	\]
	as $t\to 0^{+}$, i.e. $w^{\mathrm{nl}}
	\in \mathcal{C}([0,T];L^r)$. The linear part also belongs to $\mathcal{C}([0,T];L^r)$ by the linear estimates. Consequently, we conclude that
	\begin{equation}\label{remove_compact_eq11}
		w\in \mathcal{C}([0,T];L^r).
	\end{equation}
	Combining \eqref{remove_compact_eq10} and \eqref{remove_compact_eq11}, we show that
	\[
	U\in\left(\mathcal{C}([0,T];H^1\cap L^r)\cap \mathcal{C}^1([0,T];L^2)\right)^2.
	\]
	It remains to pass the weighted decay estimates to the limit. The solutions $U^{(k)}$ satisfy the same weighted estimates, uniformly with respect to $k$. The fact is that multiplication with the bounded weight $(1+t)^a$ preserves weak-$*$ convergence on $[0,T]$, so the weak-$*$ lower semicontinuity of the $L^\infty([0,T];L^2)$ norm gives
	\[
	\operatorname*{ess\,sup}_{0\leq t\leq T}(1+t)^a\|w(t)\|_{L^2} \leq \liminf_{k\to\infty} \operatorname*{ess\,sup}_{0\leq t\leq T}(1+t)^a\|w^{(k)}(t)\|_{L^2}
	\]
	for each $w\in\{u,v\}$. The same argument applies to the $L^r$, $\dot H^1$, and time-derivative terms occurring in the definition of the $X(T)$ norm. Therefore, one may claim
	\[
	\begin{aligned}
		\|U\|_{X(T)}\leq \liminf_{k\to\infty}\|U^{(k)}\|_{X(T)}\leq 3C_0\lim_{k\to\infty}\|(U_0^k,U_1^k)\|_{\mathcal D_r}=3C_0\|(U_0,U_1)\|_{\mathcal D_r},
	\end{aligned}
	\]
	where $C_0$ is independent of $T$. Since $U$ is continuous in all the spaces involved in the definition of $X(T)$, the essential suprema in the preceding estimates coincide with the usual suprema. Since $T>0$ is arbitrary, we obtain
	\[
	\|U\|_{X(\infty)}\leq 3C_0\|(U_0,U_1)\|_{\mathcal D_r}
	\]
	and $U$ is a global (in time) mild solution to \eqref{Main_system} satisfying all the stated decay estimates. In this way, our proof is established.

	\section{Nonexistence of global mild solutions}\label{sec3}
	\subsection{Auxiliary estimates}
	We first derive several lower bound estimates for the kernel $\mathcal{K}(t,x)$, introduced in Section \ref{sec2}. For convenience later, let us denote two domains with $t,r\ge 0$ as follows:
	$$\Gamma^1_{r,t}:= \{x\in\mathbb{R}^n:|x|\leq r\sqrt{t}\} \text{ and }\Gamma^2_{r,t}:= \{x\in\mathbb{R}^n:|x|\leq r\min\{\sqrt{t},t\}\}. $$
	\begin{lemma}\label{es_K}
		Let $1\leq n\leq3$ and fix $r_0\in (0,1)$. Then, $\mathcal{K}(t,x)$ is a nonnegative kernel, moreover, there exists a constant $C^0_1>0$ such that
		\begin{equation}\label{es1_K}
			\mathcal{K}(t,x)\geq C^0_1t^{-\frac{n}{2}} \quad \text{ for }t\geq 1, x\in \Gamma^1_{r_0,t}
		\end{equation}
		and
		\begin{equation}\label{es2_K}
			\mathcal{K}(t,x)\geq C^0_1(1+t)^{-\frac{n}{2}}\quad \text{ for }t>0, x\in \Gamma^2_{r_0,t}.
		\end{equation}
	\end{lemma}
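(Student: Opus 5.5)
The plan is to use the explicit form of the damped wave kernel in low dimensions. Writing $w=e^{t/2}\tilde w$ turns the damped wave equation $w_{tt}-\Delta w+w_t=0$ into the Klein--Gordon-type equation $\tilde w_{tt}-\Delta\tilde w-\tfrac14\tilde w=0$ (negative mass). Hence
\[
\mathcal K(t,x)=e^{-t/2}\,G(t,x),
\]
where $G$ is the fundamental solution of $\partial_t^2-\Delta-\tfrac14$. For $n=1,2,3$, $G$ is known in closed form.
\begin{itemize}
\item For $n=1$: $G=\tfrac12 I_0\big(\tfrac12\sqrt{t^2-|x|^2}\big)\mathbf 1_{\{|x|<t\}}$.
\item For $n=2$: $G=\dfrac{\cosh\big(\tfrac12\sqrt{t^2-|x|^2}\big)}{2\pi\sqrt{t^2-|x|^2}}\,\mathbf 1_{\{|x|<t\}}$.
\item For $n=3$: $G$ equals $\dfrac{\delta(t-|x|)}{4\pi t}$ plus the absolutely continuous part $\dfrac{1}{8\pi}\dfrac{I_1\big(\tfrac12\sqrt{t^2-|x|^2}\big)}{\sqrt{t^2-|x|^2}}\mathbf 1_{\{|x|<t\}}$.
\end{itemize}
The $n=3$ form comes from the 1D kernel by the operator $-\tfrac{1}{2\pi r}\partial_r$; the $n=2$ form comes by Hadamard descent. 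Nonnegativity then follows at once: $I_0,I_1,\cosh$ are positive and the singular part in $n=3$ is a positive measure. I would derive or quote these formulas, checking them against the Fourier formula for $\widehat{\mathcal K}$ given in Section \ref{sec2}.

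The lower bounds come from asymptotics of the profile $e^{-t/2}\Phi\big(\tfrac12\sqrt{t^2-|x|^2}\big)$. Set $\rho:=\sqrt{t^2-|x|^2}$. Since $|x|\le r_0\min\{\sqrt t,t\}$ with $r_0<1$, we have $|x|<t$, so we are strictly inside the light cone. Moreover $\rho\ge\sqrt{1-r_0^2}\,t$, and the singular part in $n=3$ can be discarded as it is nonnegative. Two regimes are then handled separately.

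\textbf{Large $t$ ($t\ge1$).} Use $I_\nu(z)\ge c\,z^{-1/2}e^{z}$ for $z\ge z_0$, and $\cosh z\ge\tfrac12e^z$. The key computation is
\[
t-\rho=\frac{|x|^2}{t+\rho}\le \frac{r_0^2\,t}{t}=r_0^2 .
\]
Hence $e^{-t/2}e^{\rho/2}\ge e^{-r_0^2/2}$. The remaining algebraic factors are
\begin{itemize}
\item $\rho^{-1/2}\sim t^{-1/2}$ for $n=1$;
\item $\rho^{-1/2}\rho^{-1}\sim t^{-1}$ for $n=2$;
\item $\rho^{-1/2}\rho^{-1}\sim t^{-3/2}$ for $n=3$,
\end{itemize}
that is, $t^{-n/2}$ in every case. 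This gives \eqref{es1_K} on $\Gamma^1_{r_0,t}$.

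\textbf{Bounded $t$.} For \eqref{es2_K} with $t\ge1$, note $\Gamma^2_{r_0,t}=\Gamma^1_{r_0,t}$, so the previous case applies. For $0<t<1$ we have $\Gamma^2_{r_0,t}=\{|x|\le r_0t\}$ and $\rho\in[\sqrt{1-r_0^2}\,t,\,t]$. Bound from below using $I_0\ge1$, $\cosh\ge1$, and $I_1(z)\ge z/2$, together with $e^{-t/2}\ge e^{-1/2}$:
\begin{itemize}
\item for $n=1$, $\mathcal K\ge\tfrac12e^{-1/2}$;
\item for $n=2$, $\mathcal K\ge c\,\rho^{-1}\ge c$;
\item for $n=3$, $\mathcal K\ge c\cdot\tfrac14$.
\end{itemize}
Each is $\gtrsim(1+t)^{-n/2}$ since $(1+t)^{-n/2}\le1$.

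The main obstacle I expect is correctly establishing the explicit kernels for $n=2,3$, especially the descent step and the correct constants and sign of the $I_1$ term in $n=3$. A wrong sign there would break nonnegativity. If that becomes delicate, my fallback is to verify positivity and the interior bounds directly from the 1D formula, applying the radial-derivative identity and using $\tfrac{d}{dz}I_0=I_1\ge0$.
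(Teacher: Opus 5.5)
Your proposal is correct and follows essentially the paper's proof. It uses the same explicit kernels for $n=1,2,3$, the same identity $t-\rho=|x|^2/(t+\rho)\le r_0^2$ giving $e^{-t/2}e^{\rho/2}\ge e^{-r_0^2/2}$, and Bessel-type lower bounds; the only cosmetic difference is that for \eqref{es2_K} the paper uses bounds uniform in $z\ge0$ (e.g.\ $I_0(z)\ge Ce^z(1+z)^{-1/2}$) instead of your split into $t\ge1$ and $0<t<1$. Two slips to fix: the substitution should be $w=e^{-t/2}\tilde w$ (consistent with your $\mathcal K=e^{-t/2}G$), and for $n=2$ the algebraic factor is just $\rho^{-1}$, not $\rho^{-1/2}\rho^{-1}$, since $\cosh z\ge\frac12 e^z$ carries no $z^{-1/2}$, and this is what actually produces $t^{-1}$.
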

	\begin{proof}
		Let us first prove \eqref{es1_K}. Set $r=|x|$, $\rho=\sqrt{t^2-r^2}$, and $z=\rho/2$. For $t\geq 1$, we have $r<t$ and $\rho\geq t/\sqrt{2}$. Then, it holds 
		\[
		\frac{t}{2}-z=\frac{t-\rho}{2}=\frac{t^2-\rho^2}{2(t+\rho)}=\frac{r^2}{2(t+\rho)}.
		\]
		Since $r^2\leq c^2t$ and $t+\rho\geq t$, we have $0\leq t/2-z\leq c^2/2$ which leads to
		\begin{equation}\label{es1_case_n1}
			e^{-t/2}e^z=\exp\tron{-\frac{r^2}{2(t+\rho)}}\geq e^{-c^2/2}.
		\end{equation}
		Using the representation formulas of the modified Bessel functions
		\[
		I_0(z)=\frac{1}{\pi}\int_0^{\pi}e^{z\cos\theta}\,d\theta\quad\text{and}\quad I_1(z)=\frac{1}{\pi}\int_0^{\pi}e^{z\cos\theta}\cos\theta\,d\theta
		\]
		we have the following asymptotic expansions:
		\begin{equation}\label{eq_approx_modified_Bessel}
			I_{\nu}(z)\sim \frac{e^z}{\sqrt{2\pi z}}\bigl(1+\mathcal{O}(z^{-1})\bigr)\qquad\text{as }z\to\infty,\quad \nu=0,1.
		\end{equation}
		\begin{itemize}[leftmargin=*]
			\item If $n=1$, then the kernel $\mathcal{K}$ has the form
			\[
			\mathcal{K}(t,x)=\frac{1}{2}e^{-t/2}
			I_0\left(\frac{\sqrt{t^2-|x|^2}}{2}\right)
			\mathbf{1}_{\{|x|<t\}}.
			\]
			Using \eqref{es1_case_n1}, \eqref{eq_approx_modified_Bessel} and $z\leq t/2$, we obtain
			\[
			\mathcal{K}(t,x)\gtrsim z^{-\frac{1}{2}}\gtrsim t^{-\frac{1}{2}}.
			\]
			\item If $n=2$, then the kernel $\mathcal{K}$ has the form
			\[
			\mathcal{K}(t,x)=\frac{e^{-t/2}}{2\pi}\frac{\cosh\left(\dfrac{\sqrt{t^2-|x|^2}}{2}\right)}{\sqrt{t^2-|x|^2}}\mathbf{1}_{\{|x|<t\}}.
			\]
			From \eqref{es1_case_n1} and the fact that $\cosh(z)\geq e^z/2$, we have
			\[
			\mathcal{K}(t,x)\gtrsim \rho^{-1}\gtrsim t^{-1}.
			\]
			\item If $n=3$, then the kernel $\mathcal{K}$ has the form
			\[
			\mathcal{K}(t,x)=e^{-t/2}\Bigg[\frac{1}{4\pi t}\delta_{\{|x|=t\}}+\frac{1}{8\pi}\frac{I_1\Big(\dfrac{\sqrt{t^2-|x|^2}}{2}\Big)}{\sqrt{t^2-|x|^2}}\mathbf{1}_{\{|x|<t\}}
			\Bigg].
			\]
			Here $\delta_{\{|x|=t\}}$ denotes the surface Dirac measure on the sphere
			$\{|x|=t\}$, which gives
			\[
			\left(\frac{1}{4\pi t}\delta_{\{|x|=t\}}\ast_x g\right)(x)=\int_{|y|=t}\frac{1}{4\pi t}g(x-y)\,dS_y.
			\]
			On the region $r\leq c\sqrt{t}<t$, it is clear to see that the term $(4\pi t)^{-1}\delta_{\{|x|=t\}}$ vanishes. Using \eqref{es1_case_n1} and \eqref{eq_approx_modified_Bessel} again, we obtain
			\[
			\mathcal{K}(t,x)\gtrsim e^{-t/2}e^z/(\rho \sqrt{z}),\text{ i.e. }\mathcal{K}(t,x)\gtrsim \rho^{-\frac{3}{2}}\gtrsim t^{-\frac{3}{2}}.
			\]
		\end{itemize}
		Therefore, the proof of \eqref{es1_K} is established. To prove \eqref{es2_K}, we note for $x\in \Gamma^2_{r_0,t}$ that $|x|^2\leq r^2_0 t$ if $t\geq 1$ and $|x|^2\leq r^2_0t^2$, $\rho\geq t\sqrt{1-r^2_0}$ if $0<t\leq 1$. This means that $t/2-z\leq r_0^2/2,$ and the estimate \eqref{es1_case_n1} still holds. Now for $z\geq 0$, we have
		\begin{align*}
			I_{0}(z)\geq Ce^z(1+z)^{-\frac{1}{2}};\;\quad \cosh(z)\geq Cze^z(1+z)^{-1};\;\quad I_{1}(z)\geq Cze^z(1+z)^{-\frac{3}{2}},
		\end{align*}
		which give
		\[
		\mathcal{K}(t,x)\gtrsim e^{-t/2+z}(1+z)^{-n/2} \text{ for }n=1,2,3.
		\]
		Hence, combining this with \eqref{es1_case_n1} we obtain \eqref{es2_K}.
	\end{proof}
	\noindent Next, we establish a lower-bound estimate for the solution to \eqref{Linear_damped_wave} in the following lemma.
	\begin{lemma}\label{lem_lowerbound_heatkernel}
		Let us assume $w_1\in L^1\cap H^1$ and denote the quantity 
		\[
		M_w:=\int_{\mathbb{R}^n}w_1(x)\,dx>0.
		\]
		Then, there exist $C^0_2>0$ and $T_0=T_0(w_1,M_w)>0$ such that the solution to \eqref{Linear_damped_wave} with $w_0=0$ satisfies
		\[
		w(t,x)=\mathcal{K}(t,x)\ast_x w_1(x)\geq C^0_2M_wt^{-\frac{n}{2}}
		\]
		for any $t\geq T_0$ and a.e $x\in\Gamma^1_{r_0,t}$. 
		
	\end{lemma}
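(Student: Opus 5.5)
Since $w_1$ need not be nonnegative, the kernel lower bound alone is not enough, and I will split $w_1$ into a compactly supported main part and a small tail. Fix $R>0$ (depending on $w_1$) such that
\[
\int_{|y|\le R}w_1(y)\,dy\ge \tfrac34 M_w,\qquad \int_{|y|>R}|w_1(y)|\,dy\le \delta M_w,
\]
with $\delta>0$ small, to be chosen. Since $\mathcal K\ge0$ by Lemma \ref{es_K}, I write
\[
w(t,x)=\int_{|y|\le R}\mathcal K(t,x-y)w_1(y)\,dy+\int_{|y|>R}\mathcal K(t,x-y)w_1(y)\,dy=:J_1+J_2 .
\]

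\textbf{Step 1: the main part $J_1$.} Take $x\in\Gamma^1_{r_0,t}$ and $|y|\le R$. Then $|x-y|\le r_0\sqrt t+R\le r_1\sqrt t$ for a fixed $r_1\in(r_0,1)$, once $t\ge T_0$ with $T_0$ depending on $R$. The proof of \eqref{es1_K} only uses $|x|\le c\sqrt t$ with $c<1$, so it gives $\mathcal K(t,x-y)\ge C t^{-n/2}$ uniformly in this range. However, $w_1$ may change sign on $B_R$, so a lower bound on $\mathcal K$ alone does not control $J_1$. I need an asymptotic statement instead: on $|z|\le r_1\sqrt t$,
\[
\mathcal K(t,z)=G(t,z)\bigl(1+o(1)\bigr)\quad\text{uniformly as }t\to\infty,\qquad G(t,z)=(4\pi t)^{-n/2}e^{-|z|^2/(4t)} .
\]
This follows from the explicit formulas in the proof of Lemma \ref{es_K}, using \eqref{eq_approx_modified_Bessel} and the expansion $t/2-z=\frac{r^2}{2(t+\rho)}=\frac{r^2}{4t}(1+O(r^2/t^2))$. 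Moreover, $G(t,x-y)/G(t,x)\to1$ uniformly for $|y|\le R$ and $|x|\le r_0\sqrt t$, because $|x\cdot y|/t\le r_0R/\sqrt t$. Hence
\[
J_1=G(t,x)\bigl(1+o(1)\bigr)\int_{|y|\le R}w_1\,dy+o\bigl(G(t,x)\bigr)\int_{|y|\le R}|w_1|\,dy .
\]
Since $G(t,x)\ge (4\pi t)^{-n/2}e^{-r_0^2/4}$ on $\Gamma^1_{r_0,t}$, this yields $J_1\ge \tfrac12 e^{-r_0^2/4}(4\pi t)^{-n/2}M_w$ for $t\ge T_0$. This asymptotic comparison is the main obstacle. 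For $n=3$ it needs $I_1$ rather than $I_0$, and one must check that the singular surface term is absent; it is, because $|x-y|<t$.

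\textbf{Step 2: the tail $J_2$.} I use a uniform upper bound $\mathcal K(t,z)\le Ct^{-n/2}$ for $t\ge1$, away from the light cone $|z|=t$. This holds because $e^{-t/2}I_\nu(\rho/2)\lesssim\rho^{-1/2}$ and the analogous bounds for $n=2,3$, except near $|z|\approx t$, where the kernel is exponentially small in $t$. For $n=3$, the surface-measure part contributes $e^{-t/2}t^{-1}$ times a spherical average of $w_1$. This is exponentially small, but it is not pointwise controlled by $\|w_1\|_{L^1}$. Here I use $w_1\in H^1$: the contribution is bounded in $L^\infty$ by $e^{-t/2}t\,\|w_1\|_{L^\infty(\text{spheres})}$, and I handle it via the trace inequality on spheres, or else accept the a.e. statement through an $L^2$-based estimate. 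This gives
\[
|J_2|\le Ct^{-n/2}\delta M_w+Ce^{-ct}\|w_1\|_{H^1} .
\]

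\textbf{Conclusion.} I choose $\delta$ small, depending only on $n$ and $r_0$, so that the tail is at most a quarter of the main term. I then enlarge $T_0$ to absorb the exponentially small term. This gives $w\ge C_2^0M_wt^{-n/2}$ for a.e. $x\in\Gamma^1_{r_0,t}$ and $t\ge T_0(w_1,M_w)$.
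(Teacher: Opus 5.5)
Your argument is correct, but it takes a genuinely different route from the paper's.

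\textbf{The paper's route.} The paper does not use the explicit kernel at this point. It imports from the diffusion-phenomenon literature the estimate $\|(\mathcal K(t,\cdot)-G(t,\cdot))\ast_x w_1\|_{L^\infty}\le Ct^{-\frac n2-1}\|w_1\|_{L^1\cap H^1}$ for large $t$. It then bounds $G(t,\cdot)\ast_x w_1$ from below on $\Gamma^1_{r_0,t}$ by writing it as $(4\pi t)^{-n/2}e^{-|x|^2/(4t)}I_t(x)$ and showing, by dominated convergence, that $I_t(x)\to M_w$ uniformly there. No splitting of $w_1$ is needed, because the comparison with the heat kernel is global in $L^\infty$ and has the faster rate $t^{-n/2-1}$.

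\textbf{Your route.} You prove the comparison by hand from the explicit formulas for $n\le 3$. You show $\mathcal K(t,z)=G(t,z)(1+o(1))$ uniformly on $|z|\le r_1\sqrt t$; your exponent computation is right, since the error is $r^4/(4t(t+\rho)^2)\le r_1^4/(4t)$. You combine this with a core/tail splitting of $w_1$ and the bound $\mathcal K(t,z)\lesssim t^{-n/2}$ on $\{\rho\ge t/2\}$, where $\rho=\sqrt{t^2-|z|^2}$. This makes the lemma self-contained, using only the Bessel asymptotics already in the proof of Lemma \ref{es_K}. It also shows exactly where $H^1$ enters, namely only near the light cone. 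The price is that the argument is tied to the explicit kernels for $n\le 3$ and needs care near $|z|=t$.

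\textbf{One correction near the light cone.} For $n=2$ the kernel is not pointwise exponentially small near the cone: $\mathcal K(t,z)\approx e^{-t/2}/(2\pi\rho)$ blows up as $|z|\to t^-$. What is true is that $\|\mathcal K(t,\cdot)\mathbf 1_{\{\rho\le t/2\}}\|_{L^s}\lesssim e^{-t/4}t^{c_s}$ for every $s<2$. By H\"older and $H^1(\mathbb R^2)\hookrightarrow L^{s'}$ with $s'>2$, you still get a contribution $\lesssim e^{-ct}\|w_1\|_{H^1}$ uniformly in $x$, so your tail estimate survives. For $n=1$ and $n=3$ the absolutely continuous part really is bounded by $Ce^{-t/4}$ on $\{\rho\le t/2\}$. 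The $n=3$ surface term is controlled by the trace inequality on spheres as you indicate, which gives a bound $\lesssim e^{-t/2}\|w_1\|_{H^1}$ for a.e.\ $x$.
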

	\begin{proof}
		Define $G(t,x):=(4\pi t)^{-n/2}e^{-|x|^2/(4t)}$. We have
		\[
		w(t,x)\geq G(t,\cdot)\ast_x w_1(x)-\|(w-G\ast_x w_1)(t)\|_{L^{\infty}}.
		\]
		Thanks to the diffusion phenomenon (see \cite{Marcati2003} for $n=1$ and \cite{Narazaki2004} for $n=2,3$), we obtain the estimate
		\begin{equation}\label{eq1_lem_lowerbound_heatkernel}
			\|(w-G\ast_x w_1)(t)\|_{L^{\infty}}\leq C t^{-\frac{n}{2}-1}\|w_1\|_{L^1\cap H^1}
		\end{equation}
		for $t\geq T^0_1$, where $T^0_1$ is a sufficiently large positive constant. On the other hand, we write
		\[
		G(t,\cdot)\ast_x w_1(x)=(4\pi t)^{-\frac{n}{2}}e^{-\frac{|x|^2}{4t}}I_t(x),
		\]
		where
		\[
		I_t(x):=\int_{\mathbb R^n}\exp\left(\frac{x\cdot y}{2t}-\frac{|y|^2}{4t}\right)w_1(y)\,dy.
		\]
		We claim that
		\begin{equation}
			\label{uniform_mass_convergence}
			\sup_{x\;\in \;\Gamma^1_{r_0,t}}|I_t(x)-M_w|\to 0\qquad\text{as }t\to\infty,
		\end{equation}
		where
		\[
		M_w:=\int_{\mathbb R^n}w_1(y)\,dy>0.
		\]
		Indeed, one has
		\[
		\sup_{x\in \Gamma^1_{r_0,t}}|I_t(x)-M_w|\leq\int_{\mathbb R^n}\sup_{x\in \Gamma^1_{r_0,t}}\left|\exp\left(\frac{x\cdot y}{2t}-\frac{|y|^2}{4t}\right)-1\right||w_1(y)|\,dy.
		\]
		For any fixed $y\in\mathbb R^n$, it follows that
		\[
		\sup_{x\;\in\;\Gamma^1_{r_0,t}}\left|\frac{x\cdot y}{2t}-\frac{|y|^2}{4t}\right|\leq\frac{r_0|y|}{2\sqrt{t}}+\frac{|y|^2}{4t}\to 0
		\]
		as $t\to\infty$. As a consequence, we arrive at
		\[
		\sup_{x\;\in\;\Gamma^1_{r_0,t} }\left|\exp\left(\frac{x\cdot y}{2t}-\frac{|y|^2}{4t}\right)-1\right|\to 0.
		\]
		On the other hand, for $x\in \Gamma^1_{r_0,t}$ one obtains
		\[
		\frac{x\cdot y}{2t}-\frac{|y|^2}{4t}=\frac{|x|^2}{4t}-\frac{|x-y|^2}{4t}\leq \frac{r_0^2}{4},
		\]
		which implies
		\[
		\sup_{x\;\in\;\Gamma^1_{r_0,t}}\left|\exp\left(\frac{x\cdot y}{2t}-\frac{|y|^2}{4t}\right)-1\right|\leq 1+e^{r_0^2/4}.
		\]
		Since $w_1\in L^1$, the dominated convergence theorem yields \eqref{uniform_mass_convergence}. For $M_w>0$, there exists $T^0_2>0$ such that $I_t(x)\geq M_w/2$ for any $t\geq T^0_2$ and $x\in \Gamma^1_{r_0,t}$. Moreover, it is obvious to see that
		\[
		e^{-\frac{|x|^2}{4t}}\geq e^{-r_0^2/4}
		\]
		on the same region. Hence, we may conclude
		\begin{equation}\label{lowerbound_G_w1}
			G(t,\cdot)\ast_x w_1(x)\geq\frac{1}{2}(4\pi)^{-\frac{n}{2}}e^{-r_0^2/4} M_wt^{-\frac{n}{2}},
		\end{equation}
		for $t\geq T^0_2$ and $x\in \Gamma^1_{r_0,t}$. Set
		\[
		C^0_2:=\frac{1}{4}(4\pi)^{-\frac n2}e^{-r_0^2/4}.
		\]
		Next, for $T^0_1$ is large enough, we deduce from \eqref{eq1_lem_lowerbound_heatkernel} the following estimate:
		$$ \|(w-G\ast_x w_1)(t)\|_{L^{\infty}} \le C^0_2M_wt^{-n/2}. $$
		for $t\geq T^0_1$. In this way, from \eqref{lowerbound_G_w1} we arrive at
		\[
		w(t,x)\geq C^0_2M_wt^{-\frac n2}
		\]
		for $t\geq T_0:=\max\{T^0_1,T^0_2\}$ and a.e $x\in\Gamma^1_{r_0,t}$. 
	\end{proof}
	\subsection{Proof of Theorem \ref{Thr1_Blowup}}
	Assume that $U=(u,v)$ is a global (in time) mild solution to \eqref{Main_system} with initial data satisfying $u_0=v_0=0$. Then, for $w\in\{u,v\}$, the identity
	\begin{align*}
		w(t,x)=\mathcal{K}(t,x)\ast_xw_1(x)+\int_{0}^{t}\mathcal{K}(t-s,x)*\bigl(|u(s,x)|^{\alpha_w}|v(s,x)|^{\beta_w}\bigr)\,ds 
	\end{align*}
	holds in $\mathcal{C}([0,\infty);L^2)$. We want to stress out that each mild solution with a measurable representative is defined a.e by the right-hand side of the Duhamel's principle. All the pointwise inequalities below are based on iteration argument. To get started, let us fix the constant $C^0:=\min\{C^0_1,C^0_2\}$. Since $\mathcal{K}$ is nonnegative, the nonlinear terms satisfy
	\begin{equation}\label{eq_nonlinearterm_non-negative}
		w^{\rm nl}(t,x)=\int_{0}^t\mathcal{K}(t-s,x)\ast_x(|u(s,\cdot)|^{\alpha_w}|v(s,\cdot)|^{\beta_w})ds\geq 0
	\end{equation}
	for $w\in\{u,v\}$. By Lemma \ref{lem_lowerbound_heatkernel} together with \eqref{eq_nonlinearterm_non-negative}, there exists a constant $T_0$ such that 
	\[
	w(t,x)\geq C^0M_{w}t^{-\frac{n}{2}}
	\]
	for $t\geq T_0$, a.e $x\in\Gamma^1_{r_0,t}$ and $w\in\{u,v\}$.
	
	\medskip
	\noindent{\bf $\bullet$ Step 1:} It follows from Duhamel's formula that 
	\begin{equation}\label{es_Duhamel_component}
		w(t,x)\geq C^0M_{w}t^{-\frac{n}{2}}+\int_{(1-\delta_0)t}^{t}\int_{\{y\in \mathbb R^n:\;x-y\;\in\;\Gamma^2_{r_0,t-s}\}}\mathcal{K}(t-s,x-y)|u(s,y)|^{\alpha_w}|v(s,y)|^{\beta_w}\,dy\,ds
	\end{equation}
	for $t\geq T_0$ and a.e $x\in\Gamma^1_{r_0,t}$, where $\delta_0\in(0,1)$ will be fixed later. The application of Lemma \ref{es_K} gives
	\begin{equation}\label{es_K(t-s,x-y)}
		\mathcal{K}(t-s,x-y)\geq C^0(1+t-s)^{-\frac{n}{2}},  
	\end{equation}
	provided that $x-y\in \Gamma^2_{r_0,t-s}$. Now, for $x\in \Gamma^1_{r_1,t}$ with $r_1\leq r_0$, $x-y\in\Gamma^2_{r_0,t-s}$ and $s\geq (1-\delta_0)t$, we have
	\begin{equation}\label{es_domain_y_Thrblowup1}
		|y|\leq |x|+|x-y|\leq r_1\sqrt{t}+r_0\sqrt{t-s}\leq (r_1+r_0\sqrt{\delta_0})\sqrt{t}. 
	\end{equation}
	Choosing $r_1=r_0\sqrt{1-\delta_0}-r_0\sqrt{\delta_0}\in(0,1)$, we obtain from \eqref{es_domain_y_Thrblowup1} that 
	\begin{equation}\label{es_domain_y}
		|y|\leq r_0\sqrt{1-\delta_0}\sqrt{t}\leq r_0\sqrt{s}.
	\end{equation}
	Moreover, by choosing $T_1=T_0/(1-\delta_0)$ it holds
	\begin{equation}\label{es_domain_s}
		s\geq (1-\delta_0)t\geq (1-\delta_0)T_1=T_0
	\end{equation}
	for $t\geq T_1$. From \eqref{es_domain_y}, \eqref{es_domain_s} and Lemma \ref{lem_lowerbound_heatkernel}, for any $w\in\{u,v\}$ we have
	\begin{equation}\label{es_lowerbounded_nonlinearity}
		|u(s,y)|^{\alpha_w}|v(s,y)|^{\beta_w}
		\geq (C^0M_u)^{\alpha_w}(C^0M_v)^{\beta_w}
		s^{-\frac n2(\alpha_w+\beta_w)}.
	\end{equation}
	Substituting \eqref{es_K(t-s,x-y)} and \eqref{es_lowerbounded_nonlinearity} into \eqref{es_Duhamel_component}, we obtain
	\begin{equation}\label{es_component_Step0}
		w(t,x)\geq (C^0)^{1+\alpha_w+\beta_w}M^{\alpha_w}_{u} M^{\beta_w}_{v}\int_{(1-\delta_0)t}^{t}\int_{\{y\in \mathbb R^n:\;x-y\;\in\;\Gamma^2_{r_0,t-s}\}}(1+t-s)^{-\frac{n}{2}}s^{-\frac{n}{2}(\alpha_w+\beta_w)}\,dy\,ds.
	\end{equation}
	Moreover, the volume of $\Gamma^2_{r_0,t-s}$ satisfies
	\[
	\int_{\{y\in \mathbb R^n:\;x-y\;\in\;\Gamma^2_{r_0,t-s}\}}\,dy\geq V\min\{t-s,\sqrt{t-s}\}^{n},
	\]
	where $V=V(n,r_0)>0$ is a suitable constant. By the change of variables $\tau=t-s$, we set
	\[
	W(\tau):=(1+\tau)^{-\frac{n}{2}}\min\{\tau,\sqrt{\tau}\}^{n}
	\]
	to gain
	\[
	\begin{aligned}
		w(t,x)&\geq (C^0)^{1+\alpha_w+\beta_w} M^{\alpha_w}_{u} M^{\beta_w}_{v} V\int_{0}^{\delta_0t}W(\tau)(t-\tau)^{-\frac{n}{2}\alpha_w-\frac{n}{2}\beta_w}\,d\tau
	\end{aligned}
	\]
	from \eqref{es_component_Step0} for $w\in\{u,v\}$. The integral on the right-hand side satisfies
	\[
	\int_{0}^{\delta_0t}W(\tau)(t-\tau)^{-\frac{n}{2}\alpha_w-\frac{n}{2}\beta_w}d\tau\geq C_{W}\delta_0^{n+1}t^{1-\frac{n}{2}\alpha_w-\frac{n}{2}\beta_w}
	\]
	for a constant $C_W= C_W(t,\delta_0)$. Therefore, we arrive at
	\[
	w(t,x)\geq C(C^0)^{\alpha_w+\beta_w}M^{\alpha_w}_{u}M^{\beta_w}_{v}\delta_0^{n+1}t^{1-\frac{n}{2}\alpha_w-\frac{n}{2}\beta_w}
	\]
	for $t\geq T_1$ and a.e $x\in\Gamma^1_{r_1,t}$, where $C=C^0VC_{W}$. 
	
	\medskip
	\noindent{\bf $\bullet$ Step $k$:} Let us define the following sequences with $k\ge 0$: 
	\begin{align*}
		&a_0=b_0= n/2, \quad c_0=C^0M_u,\quad d_0=C^0M_v,\quad \delta_0=\delta^*/(1+n),\\
		&a_{k+1}=\min\{a_k,a_{k}\alpha+b_{k}p-1\},\\
		&b_{k+1}=\min\{b_k,a_{k}q+b_{k}\beta-1\}, \\
		&c_{k+1}=\left\{
		\begin{aligned}
			&C(c_{k})^{\alpha}(d_{k})^{p}\delta^{n+1}_k&&\text{if }a_{k+1}<a_k,\\
			&c_{k}&&\text{if }a_{k+1}=a_{k},
		\end{aligned}
		\right.\\
		&d_{k+1}=\left\{
		\begin{aligned}
			&C(c_{k})^{q}(d_{k})^{\beta}\delta^{n+1}_{k}&&\text{if }b_{k+1}<b_{k},\\
			&d_k &&\text{if }b_{k+1}=b_k,
		\end{aligned}
		\right. \\
		&r_{k+1}=r_k\sqrt{1-\delta_k}-r_0\sqrt{\delta_k}\quad\text{and}\quad T_{k+1}=\frac{T_k}{1-\delta_k}
	\end{align*}
	with
	$$ \delta_k=\frac{\delta^*}{1+|a_k|+|b_k|}, $$ where $\delta^*$ will be chosen later. Now we assume that $u(t,x)\geq c_kt^{-a_k}$ and $v(t,x)\geq d_kt^{-b_k}$ for $t\geq T_k$ and a.e $x\in\Gamma^1_{r_k,t}$. For $w\in\{u,v\}$, set $\lambda_{w,k}:=\alpha_w a_k+\beta_w b_k$. For $t\geq T_{k+1}$, $s\in[(1-\delta_k)t,t]$, and $x\in\Gamma^1_{r_{k+1},t}$, the choice of $r_{k+1}$ and $T_{k+1}$ implies that $y\in \Gamma^1_{r_k,s}$ and $s\geq T_k$. 
	Repeating an argument as we did in Step 1, we obtain
	\[
	w(t,x)
	\geq Cc_k^{\alpha_w}d_k^{\beta_w}
	\int_0^{\delta_kt}W(\tau)(t-\tau)^{-\lambda_{w,k}}\,d\tau.
	\]
	Next, if we choose $\delta^*\leq 1/4$, since $\delta_k\leq \delta^*\leq 1/4$, it leads to $(t-\tau)^{-\lambda_{w,k}}
	\geq C^0_1t^{-\lambda_{w,k}}$ for $0\leq\tau\leq\delta_kt$, where $C^0_1>0$ is independent of $w$ and $k$. Finally, for any $t\geq1$, we get
	\[
	\int_0^{\delta_k t}W(\tau)\,d\tau
	\geq C_W\delta_k^{n+1}t,
	\]
	where $C_W>0$ is independent of $t$ and $\delta_k$, by considering separately the cases $\delta_k t\leq1$ and $\delta_k t>1$. Consequently, we obtain
	\[
	w(t,x) 
	\geq Cc_k^{\alpha_w}d_k^{\beta_w}
	\delta_k^{n+1}t^{1-\lambda_{w,k}},
	\]
	for $w\in \{u,v\}$, with $C>0$ independent of $k$. Recalling the recursive definitions of $a_{k+1},b_{k+1},c_{k+1}$, and $d_{k+1}$ we claim the following estimates for any $k\in\mathbb N$:
	\begin{equation} \label{Relation_a_k-b_k}
		u(t,x)\geq c_kt^{-a_k},
		\qquad
		v(t,x)\geq d_kt^{-b_k},
	\end{equation}
	with $t\geq T_k$ and a.e $x\in\Gamma^1_{r_k,t}$, provided that $r_k>0$. \medskip
	
	Next, let us show below that
	$\delta^*$ can be chosen sufficently small so that $r_k\geq r_0/2>0$ for all $k\in \mathbb N$. Let $A_k=(a_k,b_k)^{\top}$ and define
	\[
	P=\begin{pmatrix}\alpha&p\\ q&\beta\end{pmatrix}.
	\]
	From the definitions of $a_{k+1}$ and $b_{k+1}$, it holds $A_{k+1}=\min\{A_k,PA_k-\boldsymbol{1}\}$. If $\{A_k\}_{k\in \mathbb N}$ is bounded below, then $A_k\to A_{\infty}\in\mathbb{R}^2$ as $k\to\infty$. Here, $A_{\infty}\leq A_{0}$ and  
	\[
	A_{\infty}=\min\{A_{\infty},PA_{\infty}-\boldsymbol{1}\}.
	\]
	Hence, $A_{\infty}\leq PA_{\infty}-\boldsymbol{1}$, which leads to $(P-I)A_{\infty}\geq \boldsymbol{1}$. The following lemma shows that such a vector $A_{\infty}$ cannot exist.
	\begin{lemma}\label{lem_matrix}
		Let
		\[
		L=P-I=\begin{pmatrix}\alpha-1&p\\ q&\beta-1\end{pmatrix}
		\]
		and assume that \eqref{Thr1blowup_Criticalcurve} holds. Then there is no $\Theta=(\theta_1,\theta_2)^{\top}$ with $\theta_i\leq n/2$ for $i=1,2$ such that
		\[
		L\Theta\geq \boldsymbol{1}.
		\]
	\end{lemma}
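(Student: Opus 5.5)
The proof will be a direct linear-algebra argument: I will eliminate one variable at a time from the system $L\Theta\geq\boldsymbol{1}$ by taking nonnegative linear combinations of its two rows. Write
\[
D:=pq-(1-\alpha)(1-\beta)=\det L .
\]

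First I will check that \eqref{Thr1blowup_Criticalcurve} forces $D>0$. Since $\alpha<1$ and $q>0$, we have $1+q-\alpha>0$, so the numerator $1+\max\{q-\alpha,p-\beta\}$ is strictly positive. The quotient is assumed to exceed $n/2>0$ (and to be well defined), so its denominator $D$ must be strictly positive.

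Next, suppose for contradiction that $\Theta=(\theta_1,\theta_2)^{\top}$ satisfies $L\Theta\geq\boldsymbol{1}$, that is,
\[
(\alpha-1)\theta_1+p\theta_2\geq 1,\qquad q\theta_1+(\beta-1)\theta_2\geq 1 .
\]
Both multipliers $q$ and $1-\alpha$ are nonnegative, because $q>0$ and $\alpha<1$. So I may multiply the first inequality by $q$, the second by $1-\alpha$, and add. The $\theta_1$-terms cancel, leaving
\[
D\,\theta_2\geq 1+q-\alpha .
\]
Symmetrically, multiplying the first inequality by $1-\beta\geq0$ and the second by $p>0$ and adding gives
\[
D\,\theta_1\geq 1+p-\beta .
\]

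Since $D>0$, these two bounds give
\[
\max\{\theta_1,\theta_2\}\geq\frac{1+\max\{q-\alpha,p-\beta\}}{D}>\frac n2
\]
by \eqref{Thr1blowup_Criticalcurve}. This contradicts $\theta_i\leq n/2$ for $i=1,2$, so no such $\Theta$ exists.

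The only points needing care are the sign of $D$, which I settle from the hypothesis as above, and the nonnegativity of the multipliers, which comes from $\alpha,\beta\in[0,1)$ and $p,q>0$. I do not expect any genuine obstacle beyond these.
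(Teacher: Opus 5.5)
Your proposal is correct and is essentially the paper's proof: the same nonnegative row combinations give $D\theta_1\geq 1+p-\beta$ and $D\theta_2\geq 1+q-\alpha$, which contradicts $\theta_i\leq n/2$. Your explicit checks that $D>0$ (from the positive numerator and the hypothesis) and that the multipliers $q,\,1-\alpha,\,1-\beta,\,p$ are nonnegative justify steps the paper uses tacitly when it combines the inequalities and divides by $D$.
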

	
	\begin{proof}
		Suppose that there exists $\Theta=(\theta_1,\theta_2)^{\top}$ with $\theta_i\leq n/2$ for $i=1,2$ such that $L\Theta\geq \boldsymbol{1}$, i.e.
		\begin{align}
			-(1-\alpha)\theta_1+p\theta_2&\geq 1,\label{eq1_lem_matrix}\\
			q\theta_1-(1-\beta)\theta_2&\geq 1.\label{eq2_lem_matrix}
		\end{align}
		Multiplying both sides of \eqref{eq1_lem_matrix} by $1-\beta$ and both sides of \eqref{eq2_lem_matrix} by $p$ and then adding the resulting inequalities, we obtain
		\[
		\theta_1\geq \frac{p+1-\beta}{pq-(1-\alpha)(1-\beta)}.
		\]
		Similarly, we also derive
		\[
		\theta_2\geq \frac{q+1-\alpha}{pq-(1-\alpha)(1-\beta)}.
		\]
		Combining the above estimates one finds
		\[
		\max\{\theta_1,\theta_2\}\geq \frac{1+\max\{q-\alpha,p-\beta\}}{pq-(1-\alpha)(1-\beta)}>\frac{n}{2},
		\]
		thanks to \eqref{Thr1blowup_Criticalcurve}. It is a contradiction to the assumption of $\theta_i\leq n/2$ for $i=1,2$. Hence, this completes the proof of Lemma \ref{lem_matrix}.
	\end{proof}
	By Lemma \ref{lem_matrix}, at least one of the sequences $\{a_k\}_{k\in\mathbb N}$ and $\{b_k\}_{k\in \mathbb N}$ are not bounded below. The recursive relations then imply that both $a_k\to-\infty$ and $b_k\to-\infty$ as $k\to\infty$. We define
	\begin{equation}\label{def_X_k_Y_k}
		X_k(t)=\log(c_k)-a_k\log(t),\qquad Y_k(t)=\log(d_k)-b_k\log(t).
	\end{equation}
	By \eqref{Relation_a_k-b_k}, we have
	\begin{equation}\label{es_u_v}
		u(t,x)\geq e^{X_k(t)} \text{ and }v(t,x)\geq e^{Y_k(t)} 
	\end{equation}
	for $t\geq T_k$ and a.e $x\in \Gamma^1_{r_k,t}$. Let $R_k=(-a_k,-b_k)^{\top}$ and $H_k=(-\log(c_k),-\log(d_k))^{\top}$. We show that there exist a $2\times 2$ matrix $M_k$ and $2\times 1$ vectors $E_k,\eta_k$ such that
	\begin{align*}
		R_{k+1}=M_kR_k+\eta_k\quad\text{and}\quad H_{k+1}\leq M_kH_k+E_k.
	\end{align*}
	Indeed, we set
	\begin{align*}
		&a_{k+1}=(1-\varepsilon_{u,k})a_k+\varepsilon_{u,k}\left(\alpha a_k+pb_k-1\right),\\
		&b_{k+1}=(1-\varepsilon_{v,k})b_k+\varepsilon_{v,k}\left(qa_k+\beta b_k-1\right),
	\end{align*}
	where $\varepsilon_{u,k},\,\varepsilon_{v,k}\in\{0,1\}$, and define the following matrices:
	\[
	M_k:=\begin{pmatrix}
		1-\varepsilon_{u,k}+\varepsilon_{u,k}\alpha
		&
		\varepsilon_{u,k}p
		\\[2mm]
		\varepsilon_{v,k}q
		&
		1-\varepsilon_{v,k}+\varepsilon_{v,k}\beta
	\end{pmatrix}
	\quad\text{ and }\quad
	\eta_k:=
	\begin{pmatrix}
		\varepsilon_{u,k}\\
		\varepsilon_{v,k}
	\end{pmatrix}.
	\]
	Then, $R_{k+1}=M_kR_k+\eta_k$. Next we consider the sequences $\{c_k\}_{k\in \mathbb N}$ and
	$\{d_k\}_{k\in \mathbb N}$ to express
	\begin{align*}
		c_{k+1}=c_k^{1-\varepsilon_{u,k}}
		\left(C c_k^{\alpha}d_k^{p}
		\delta_k^{n+1}
		\right)^{\varepsilon_{u,k}},\qquad d_{k+1}=d_k^{1-\varepsilon_{v,k}}
		\left(C c_k^{q}d_k^{\beta}
		\delta_k^{n+1}
		\right)^{\varepsilon_{v,k}},
	\end{align*}
	which gives the expressions as follows:
	\[
	\begin{aligned}
		-\log(c_{k+1}) &=\bigl(1-\varepsilon_{u,k}+\varepsilon_{u,k}\alpha\bigr)
		\bigl(-\log(c_k)\bigr)+\varepsilon_{u,k}p
		\bigl(-\log(d_k)\bigr)+\varepsilon_{u,k}\Lambda_k, \\
		-\log(d_{k+1}) &=\varepsilon_{v,k}q
		\bigl(-\log(c_k)\bigr)+\bigl(1-\varepsilon_{v,k}
		+\varepsilon_{v,k}\beta\bigr)
		\bigl(-\log(d_k)\bigr)+\varepsilon_{v,k}\Lambda_k,
	\end{aligned}
	\]
	where $\Lambda_k:=-\log C-(n+1)\log\delta_k$. By the definition of $H_k$, we have the identity $H_{k+1}=M_kH_k+\Lambda_k\eta_k$. We define $E_k:=[\Lambda_k]^{+}\eta_k$. Since $\Lambda_k\eta_k\leq[\Lambda_k]^{+}\eta_k$, we conclude that $H_{k+1}\leq M_kH_k+E_k$. Next, we derive an upper bound for $E_k$. Let $S_k=1+|a_k|+|b_k|$. The recursive relations for $a_k$ and $b_k$ give
	\[
	|a_{k+1}|\leq (1+p+\alpha)S_k\; \text{  and  }\;|b_{k+1}|\leq (1+q+\beta)S_k.
	\]
	These estimates imply that
	\[
	S_{k+1}=1+|a_{k+1}|+|b_{k+1}|\leq (3+p+\alpha+q+\beta)S_k.
	\]
	Therefore, we obtain the estimate $S_{k}\leq \rho_1^kS_0=\rho_1^k(1+n)$ with $\rho_1:=3+p+\alpha+q+\beta>1$. It follows that
	\[
	-\log C- (n+1)\log(\delta_k)\leq C\bigl(1+\log(1+|a_k|+|b_k|)\bigr)\leq C(k+1).
	\]
	Therefore, we arrive at
	\begin{equation}\label{es_for_E}
		E_k\leq C(k+1)\boldsymbol{1}.
	\end{equation}
	On the other hand, we have $R_{k+1}\geq PR_k+\boldsymbol{1}$. Since both $a_k\to-\infty$ and $b_k\to-\infty$ as $k\to\infty$, there exists $k_0\geq1$ such that
	\[
	R_{k_0}=(-a_{k_0},-b_{k_0})^{\top}\geq C\boldsymbol{1}.
	\]
	Set $\rho_2:= \min\{p+\alpha,q+\beta\}>1$. Then $P\boldsymbol{1}\geq \rho_2\boldsymbol{1}$ and $R_{k_0+1}\geq PR_{k_0}\geq CP\boldsymbol{1}\geq C\rho_2\boldsymbol{1}$. By an induction argument, it holds $R_{k_0+l}\geq C\rho^l_2\boldsymbol{1}$ for $l\in\mathbb{N}$. For $k\geq k_0$, choosing $l=k+1-k_0$ gives
	\begin{equation}\label{es_for_R}
		R_{k+1}\geq (C\rho^{1-k_0}_2)\rho^k_2\boldsymbol{1}.
	\end{equation}
	It follows from \eqref{es_for_E}-\eqref{es_for_R} that there exists a nonegative sequence $\{\varepsilon_k\}_{k\geq1}$ satisfying $E_k\leq\varepsilon_kR_{k+1}$ and
	\begin{equation}\label{sum_varepsilon}
		\sum_{k=0}^{\infty}\varepsilon_{k}<\infty.
	\end{equation}
	Choose $Q_{k_0}\geq0$ such that $H_{k_0}\leq Q_{k_0}R_{k_0}$ and let $Q_{k+1}=Q_{k}+\varepsilon_k$ with $k\geq k_0$. If $H_{k}\leq Q_{k} R_{k}$, then the relation
	\[
	H_{k+1}\leq M_{k}H_{k}+E_{k}\leq Q_kM_kR_k+\varepsilon_kR_{k+1}\leq (Q_{k}+\varepsilon_k)R_{k+1}
	\]
	is valid, where the last inequality follows from the fact that $M_{k}R_{k}=R_{k+1}-\eta_{k}\leq R_{k+1}$. By an induction argument, it holds $H_k\leq Q_kR_k$ for all $k\geq k_0$. Moreover, \eqref{sum_varepsilon} implies that $Q_k$ is bounded above. Hence, there exists $Q\in \mathbb{R}$ such that $H_k\leq QR_k$ for all $k\geq k_0$. This leads to
	\begin{equation}\label{final_es_blowup}
		-\log(c_k)\leq Q(-a_k)\quad\text{and}\quad-\log(d_k)\leq Q(-b_k).
	\end{equation}
	Substituting \eqref{final_es_blowup} into \eqref{def_X_k_Y_k}, we obtain $X_k(t)\geq-a_k(\log t-Q)$ and $Y_k(t)\geq-b_k(\log t-Q)$ for $t\geq T_k$ and a.e $x\in\Gamma^1_{r_k,t}$. Again, using an induction argument we obtain $r_k\leq r_0$ for all $k\in\mathbb{N}$. Thus, it entails
	\[
	r_k-r_{k+1}=r_k(1-\sqrt{1-\delta_k})+r_0\sqrt{\delta_k}\leq r_0\delta_k+r_0\sqrt{\delta_k},
	\]
	which gives
	\[
	r_k\geq r_0-r_0\sum_{j=0}^{k-1}(\delta_j+\sqrt{\delta_j}).
	\]
	Since $\rho_2>1$, we have $|a_j|+|b_j|\gtrsim \rho_2^j$ for all sufficiently large $j$. Then, one has
	\[
	\sum_{j=0}^{\infty}\bigg(\frac{1}{1+|a_j|+|b_j|}+\frac{1}{\sqrt{1+|a_j|+|b_j|}}\bigg)<\infty,
	\]
	which guarantees that we can choose $\delta^*<1/8$ fulfilling
	\[
	\sum_{j=0}^{\infty}(\delta_j+\sqrt{\delta_j})<\frac{1}{2}
	\]
	and then $r_k\geq r_0/2$ for all $k\in\mathbb{N}$. Hence, for $t>T_k>T_0>0$, the regions $\Gamma^1_{r_k,t}$ remain nonempty for all $k\in \mathbb{N}$.
	
	Finally, since $0\leq \delta_k\leq\delta^*\leq 1/8$, we have $-\log(1-\delta_k)\leq \delta_k/(1-\delta_k)\leq 8/7\delta_k$. Recalling
	\[
	T_k=T_0\prod_{j=0}^{k-1}\frac{1}{1-\delta_j}
	\]
	we get
	\[
	\log\tron{\frac{T_k}{T_0}}= -\sum_{j=0}^{k-1}\log(1-\delta_j)\leq\frac{8}{7}\sum_{j=0}^{\infty}\delta_j<\infty.
	\]
	Consequently, there exists $T_{\infty}>0$ such that $T_k\leq T_\infty$ for all $k\in\mathbb{N}$. Choosing $t^*>T_{\infty}$ such that $\log(t^*)-Q>0$, we arrive at 
	\begin{equation}\label{X,Y_to_infty}
		X_k(t^*),Y_k(t^*)\to\infty\qquad\text{as }k\to\infty.
	\end{equation}
	By \eqref{es_u_v}, we may estimate the following $L^2$-lower bounds:
	\begin{equation}\label{lowerbound_L2_norm}
		\|u(t^*,\cdot)\|_{L^2}\gtrsim \big|\Gamma^1_{r_0/2,t^*}\big|^{1/2}e^{X_k(t^*)},\qquad \|v(t^*,\cdot)\|_{L^2}\gtrsim \big|\Gamma^1_{r_0/2,t^*}\big|^{1/2}e^{Y_k(t^*)}.
	\end{equation}
	Summarizing, the combination of \eqref{X,Y_to_infty} and \eqref{lowerbound_L2_norm} yields a contradiction, hence, our proof is complete.
	
	\section{Further discussions and final remarks}\label{sec4}
	
	In this subsection, we briefly discuss the case in which the exponents of the pertubation terms satisfy $\alpha,\beta\geq 1$. In this case, the thresholds are governed by the total degrees $p+\alpha$ and $q+\beta$ of the nonlinearities in the two equations, rather than by the coupled critical curve. Moreover, if $p,q\geq 1$, the nonlinear mappings $(u,v)\to |u|^{\alpha}|v|^{p}$ and $(u,v)\to |u|^{q}|v|^{\beta}$ for any $\alpha,\beta\geq 1$ are locally Lipschitz in the solution space (see Lemma \ref{Lem_Lipschitz}). Therefore, the compactness argument based on Schauder's fixed point theorem is no longer needed, and the global solution can be constructed directly by Banach's contraction principle which yields the uniqueness of the resulting mild solution. More precisely, we have the following result.
	\begin{proposition}\label{Thr_2_Globalexistence}
		Let $p,q>0$ and $\alpha,\beta\in[1,\infty)$. Set $r=\min\{2,p+\alpha,q+\beta\}$ and $\kappa_{r}=(n-1)|1/2-1/r|$. Assume that $1\leq n\leq3$ together with \eqref{con_G-N_Thr1} fulfilling the following condition:
		\begin{equation}\label{Thr2globalexistence_Critical_exponent}
			\min\{p+\alpha,q+\beta\}>1+2/n.
		\end{equation}
		Then there exists a constant $\varepsilon_0>0$ such that for any small initial data $(u_0,u_1,v_0,v_1)\in \mathcal{D}_r:=\bigl(H^1\cap H^{\kappa_r}_r\cap L^1\bigr)^2\times\bigl(L^2\cap L^1\bigr)^2$ satisfying $\|(u_0,u_1,v_0,v_1)\|_{\mathcal{D}_r}\leq\varepsilon_0$, the problem \eqref{Main_system} admits a global (in time) mild solution in the sense of Definition \ref{MildSol.Def} belonging to
		\begin{align*}
			(u,v)\in (\mathcal{C}([0,\infty); H^1\cap L^r)\cap \mathcal{C}^1([0,\infty);L^2))^2.
		\end{align*}
		Moreover, the following estimates hold:
		\begin{align*}
			\|u(t,\cdot)\|_{\dot{H}^1}+\|v(t,\cdot)\|_{\dot{H}^1}&\lesssim (1+t)^{-\frac{n}{4}-\frac{1}{2}} \|(u_0,u_1,v_0,v_1)\|_{\mathcal{D}_r},\\
			\|u(t,\cdot)\|_{L^{2}}+\|v(t,\cdot)\|_{L^2}&\lesssim (1+t)^{-\frac{n}{4}}\|(u_0,u_1,v_0,v_1)\|_{\mathcal{D}_r},\\
			\|u(t,\cdot)\|_{L^{r}}+\|v(t,\cdot)\|_{L^r}&\lesssim (1+t)^{-\frac{n}{2}\tron{1-\frac{1}{r}}}\|(u_0,u_1,v_0,v_1)\|_{\mathcal{D}_r},\\
			\|\partial_tu(t,\cdot)\|_{L^2}+\|\partial_tv(t,\cdot)\|_{L^2}&\lesssim (1+t)^{-\frac{n}{4}-1}\|(u_0,u_1,v_0,v_1)\|_{\mathcal{D}_r}.
		\end{align*}
	\end{proposition}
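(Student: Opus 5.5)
We run Banach's contraction principle directly in the global space
\[
X:=\Big\{U=(u,v)\in\big(\mathcal C([0,\infty);H^1\cap L^r)\cap\mathcal C^1([0,\infty);L^2)\big)^2:\ \|U\|_X<\infty\Big\},
\]
where $\|U\|_X:=\sup_{s\geq0}\{\mathcal E_u(s)+\mathcal E_v(s)\}$ and $\mathcal E_w$, $f_1,\dots,f_4$ are as in Section \ref{sec2}. There is no loss weight $(1+s)^{-[\ell_u]^+}$ now, since both components should decay like the linear solutions. Compact support of the data and the approximation step are not needed. The map $\mathcal N$ is defined by \eqref{mapN}, and we show two estimates:
\[
\|\mathcal N[U]\|_X\leq C_0\big(\|(U_0,U_1)\|_{\mathcal D_r}+\|U\|_X^{p+\alpha}+\|U\|_X^{q+\beta}\big),
\]
\[
\|\mathcal N[U]-\mathcal N[\bar U]\|_X\leq C_0\|U-\bar U\|_X\big(\|U\|_X^{p+\alpha-1}+\|\bar U\|_X^{p+\alpha-1}+\|U\|_X^{q+\beta-1}+\|\bar U\|_X^{q+\beta-1}\big).
\]
For $\varepsilon_0$ small, these make $\mathcal N$ a contraction on a small ball. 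The unique fixed point is the mild solution, and the decay bounds are exactly the statement $\|U\|_X\lesssim\|(U_0,U_1)\|_{\mathcal D_r}$. Time continuity in $H^1\cap L^r$ and $\mathcal C^1$ in $L^2$ follow as in Step 4 of Section \ref{sec2} together with Lemma \ref{regularity_K}.

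For the first estimate, the linear part is handled by Proposition \ref{LinearEstimates}. For the nonlinear part we repeat the computation of Lemma \ref{lemma_G-N_Thr1globalexistence}, now without loss terms. By H\"older's inequality and Lemma \ref{fractionalGagliardoNirenberg},
\[
\big\||u|^{\alpha_w}|v|^{\beta_w}\big\|_{L^\gamma}\lesssim(1+s)^{-\frac n2(\alpha_w+\beta_w-\frac1\gamma)}\|U\|_X^{\alpha_w+\beta_w},\qquad \gamma\in\{1,2,r\}.
\]
Here \eqref{con_G-N_Thr1} keeps the interpolation parameter in $[0,1]$, and $\gamma(\alpha_w+\beta_w)\geq r$ holds because $\alpha_w+\beta_w\geq r$. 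We then split the Duhamel integral into $[0,t/2]$ and $[t/2,t]$ and use Proposition \ref{LinearEstimates} with $(m_1,m_2)=(1,\cdot)$ on the first piece and $(m_2,m_2)$ on the second, as done for \eqref{eq_H1_Thr1}, \eqref{eq_Lk_Thr1} and \eqref{eq_partial_t_Thr1}. The first piece requires
\[
\int_0^{\infty}(1+s)^{-\frac n2(\alpha_w+\beta_w-1)}\,ds<\infty,
\]
which is exactly \eqref{Thr2globalexistence_Critical_exponent}. The second piece decays like $(1+t)^{-\frac n2(\alpha_w+\beta_w-\frac1\gamma)+1}$. This is faster than the target rate since $\frac n2(\alpha_w+\beta_w-1)>1$, so no log or loss appears for either component.

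The main point, and the expected obstacle, is the Lipschitz estimate; this is presumably what Lemma \ref{Lem_Lipschitz} provides. Because $\alpha,\beta\geq1$, the maps $u\mapsto|u|^{\alpha}$ and $v\mapsto|v|^{\beta}$ are locally Lipschitz:
\[
\big||u|^{\alpha}-|\bar u|^{\alpha}\big|\lesssim|u-\bar u|\big(|u|+|\bar u|\big)^{\alpha-1}.
\]
We write
\[
|u|^{\alpha_w}|v|^{\beta_w}-|\bar u|^{\alpha_w}|\bar v|^{\beta_w}=\big(|u|^{\alpha_w}-|\bar u|^{\alpha_w}\big)|v|^{\beta_w}+|\bar u|^{\alpha_w}\big(|v|^{\beta_w}-|\bar v|^{\beta_w}\big).
\]
Each term is bounded pointwise by $|U-\bar U|$ times a product of powers of $|u|,|\bar u|,|v|,|\bar v|$ of total degree $\alpha_w+\beta_w-1$.

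The delicate case is an exponent among $p,q$ that is below $1$: if $p<1$, then $v\mapsto|v|^p$ is not Lipschitz at $0$. The statement only assumes $p,q>0$, while the surrounding text says ``if $p,q\geq1$''. I would therefore first try to prove the proposition assuming additionally $p,q\geq1$, as the remark before it suggests, so that every factor is locally Lipschitz. If $p<1$ must be allowed, I would instead fall back on the Schauder-type argument of Section \ref{sec2}: compact support, then approximation, with uniqueness dropped. The decay bookkeeping would be identical to the paragraph above.

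Granting Lipschitz continuity, H\"older's inequality with the same exponent $\gamma(\alpha_w+\beta_w)$ and Lemma \ref{fractionalGagliardoNirenberg} applied to $U-\bar U$ give the same decay rate as before, multiplied by $\|U-\bar U\|_X$ and the appropriate powers of the norms. Repeating the time splitting then yields the contraction estimate.
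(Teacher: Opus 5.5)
Your proposal is correct and follows essentially the same route as the paper. For $p,q\geq1$ both arguments run Banach's contraction in the no-loss weighted norm, using Lemma \ref{Lem_Lipschitz}, H\"older plus Gagliardo--Nirenberg and the $[0,t/2]\cup[t/2,t]$ splitting, and for $p<1$ or $q<1$ both fall back on the Schauder compactness argument of Section \ref{sec2}. The only difference is cosmetic: you contract directly on $[0,\infty)$, whereas the paper contracts on each $[0,T]$ and glues the solutions by uniqueness; the two are equivalent because the constants do not depend on $T$.
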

	
	\begin{proof}
		The case of $p\in (0,1)$ or $q\in (0,1)$ follows by the same compactness argument as in the proof of Theorem \ref{Thr_1_Globalexistence}, since the nonlinearities are not locally Lipschitz at the origin. Therefore, we omit the proof in detail to avoid the repetition. Instead, we focus on the case $p,q\geq 1$. Here the nonlinearities are locally Lipschitz, so that the Banach fixed point theorem can be employed, yielding a unique global (in time) mild solution. For each $T\geq0$, we introduce the solution space
		\[
		X(T):=\bigl(\mathcal{C}([0,T];H^1\cap L^r)\cap \mathcal{C}^1([0,T];L^2)\bigr)^2
		\]
		with the weighted norm defined as follows:
		\[
		\begin{aligned}
			\|U\|_{X(T)}:=\sup_{s\in[0,T]}\sum_{w\in\{u,v\}}\Big(f_1(s)\|w(s,\cdot)\|_{L^r}+
			f_2(s)\|w(s,\cdot)\|_{L^2}
			+f_3(s)\|w(s,\cdot)\|_{\dot H^1}
			+f_4(s)\|\partial_tw(s,\cdot)\|_{L^2}\Big).
		\end{aligned}
		\]
		where $f_1(s)=(1+s)^{\frac{n}{2}\tron{1-\frac{1}{r}}}$,\;$f_2(s)=(1+s)^{\frac{n}{4}}$,\;$f_3(s)=(1+s)^{\frac{n}{4}+\frac{1}{2}}$,\;$f_4(s)=(1+s)^{\frac{n}{4}+1}$. For $U=(u,v)$ and $\overline U=(\overline u,\overline v)$, our aim is to prove the following pair of inequalities:
		\begin{align}
			\|\mathcal{N}[U]\|_{X(T)} &\leq C_0\big(\|(U_0,U_1)\|_{\mathcal{D}}+\|U\|^{p+\alpha}_{X(T)}+\|U\|^{q+\beta}_{X(T)}\big), \label{eqNu_Thr2} \\
			\|\mathcal{N}[U]-N[\overline{U}]\|_{X(T)} &\leq C_1\|U-\overline{U}\|_{X(T)}\tron{\|U\|^{p+\alpha-1}_{X(T)}+\|\overline{U}\|^{p+\alpha-1}_{X(T)}+\|U\|^{q+\beta-1}_{X(T)}+\|\overline{U}\|^{q+\beta-1}_{X(T)}}, \label{eqNu-Nv_Thr2}
		\end{align}
		where $\mathcal{N}$ is defined as in \eqref{mapN}, two constants $C_0$ and $C_1$ are independent of $T$. Choose $\varepsilon$ and $\varepsilon_0$ sufficiently small such that $\max\{2C_1,C_0\}\varepsilon^{p+\alpha-1}\leq 1/3$, $\max\{2C_1,C_0\}\varepsilon^{q+\beta-1}\leq 1/3$ and $\varepsilon_0\leq\varepsilon/(3C_0)$. It follows from \eqref{eqNu_Thr2} and \eqref{eqNu-Nv_Thr2} that
		\begin{align*}
			\|\mathcal{N}[U]\|_{X(T)}\leq \varepsilon \quad\text{ and }\quad
			\|\mathcal{N}[U]-N[\overline{U}]\|_{X(T)}\leq \frac{2}{3}\|U-\overline{U}\|_{X(T)}.
		\end{align*}
		Hence, $\mathcal{N}:X(T,\varepsilon)\to X(T,\varepsilon)$ is a contraction mapping, where
		\begin{equation*}\label{def_XT_varepsilon_Thr2}
			X(T,\varepsilon):=\{U\in X(T):\|U\|_{X(T)}\leq \varepsilon\},
		\end{equation*}
		provided that the initial data satisfy $\|(U_0,U_1)\|_{\mathcal{D}}\leq \varepsilon_0$. Therefore, for any $T>0$ there exists a unique fixed point $U^{(T)}=\mathcal{N}(U^{(T)}) \text{ in } X(T,\varepsilon)$. If $0<T_1<T_2$, the restriction of $U^{(T_2)}$ on $[0,T_1]$ is also a fixed point in $X(T_1,\varepsilon)$, hence, the uniqueness gives
		\[
		{\bf 1}_{[0,T_1]}(t)U^{(T_2)}=U^{(T_1)}.
		\]
		This means that the family $\{U^{(T)}\}_{T>0}$ defines a unique global (in time) solution $U\in X(\infty,\varepsilon)$. Moreover, from \eqref{eqNu_Thr2} and $U=\mathcal{N}[U]$, we arrive at
		\[
		\|U\|_{X(T)}\leq C_0\|(U_0,U_1)\|_{\mathcal D}+C_0\Bigl(\|U\|_{X(T)}^{p+\alpha-1}+\|U\|_{X(T)}^{q+\beta-1}\Bigr)\|U\|_{X(T)}.
		\]
		From the choice of $\varepsilon$, we get $\|U\|_{X(T)}\leq 3C_0\|(U_0,U_1)\|_{\mathcal D}$ uniformly in $T$. This yields the stated decay estimates. It remains to prove the following lemma.
		\begin{lemma}\label{lemma_G-N_Thr2globalexistence}
			Under the assumptions of Proposition \ref{Thr_2_Globalexistence}, the following estimates hold for $w\in\{u,v\}$ and $\gamma\in\{1,2,r\}$:
			\begin{align*}
				\big\||u(s,\cdot)|^{\alpha_w}|v(s,\cdot)|^{\beta_w}\big\|_{L^{\gamma}} &\lesssim  (1+s)^{-\frac{n}{2}\tron{\alpha_w+\beta_w-\frac{1}{\gamma}}} \|U\|_{X(T)}^{\alpha_w+\beta_w},\\
				\big\||u(s,\cdot)|^{\alpha_w}|v(s,\cdot)|^{\beta_w}-|\overline u(s,\cdot)|^{\alpha_w}|\overline v(s,\cdot)|^{\beta_w}\big\|_{L^{\gamma}}&\lesssim (1+s)^{-\frac{n}{2}\tron{\alpha_w+\beta_w-\frac{1}{\gamma}}}\\
				&\quad\times \|U-\overline{U}\|_{X(T)}\tron{\|U\|^{\alpha_w+\beta_w-1}_{X(T)}+\|\overline{U}\|^{\alpha_w+\beta_w-1}_{X(T)}}.
			\end{align*}
		\end{lemma}
		\begin{proof}
			By \eqref{con_G-N_Thr1}, we follow the same approach as in Lemma \ref{lemma_G-N_Thr1globalexistence} to conclude the first estimate. To prove the second one, we apply Lemma \ref{Lem_Lipschitz} combined with H\"older's inequality to obtain
			\begin{equation}\label{embedding_Lipschitz}
				\begin{aligned}
					&\big\||u(s,\cdot)|^{\alpha_w}|v(s,\cdot)|^{\beta_w}-|\overline u(s,\cdot)|^{\alpha_w}|\overline v(s,\cdot)|^{\beta_w}\big\|_{L^{\gamma}} \\
					&\quad\lesssim \|u(s,\cdot)-\overline u(s,\cdot)\|_{L^{\gamma(\alpha_w+\beta_w)}}\big(\|u(s,\cdot)\|_{L^{\gamma(\alpha_w+\beta_w)}}^{\alpha_w-1}+\|\overline u(s,\cdot)\|_{L^{\gamma(\alpha_w+\beta_w)}}^{\alpha_w-1}\big)\|v(s,\cdot)\|_{L^{\gamma(\alpha_w+\beta_w)}}^{\beta_w}\\
					&\qquad+\|v(s,\cdot)-\overline v(s,\cdot)\|_{L^{\gamma(\alpha_w+\beta_w)}}\big(\|v(s,\cdot)\|_{L^{\gamma(\alpha_w+\beta_w)}}^{\beta_w-1}+\|\overline v(s,\cdot)\|_{L^{\gamma(\alpha_w+\beta_w)}}^{\beta_w-1}\big)\|\overline u(s,\cdot)\|_{L^{\gamma(\alpha_w+\beta_w)}}^{\alpha_w}.
				\end{aligned}
			\end{equation}
			The employment of Lemma \ref{fractionalGagliardoNirenberg} gives
			\begin{equation}\label{embedding_u_j}
				\|\phi(s,\cdot)\|_{L^{\gamma(\alpha_w+\beta_w)}}\lesssim
				\|\phi(s,\cdot)\|_{L^r}^{1-\theta_{r,w}}
				\|\phi(s,\cdot)\|_{\dot{H}^1}^{\theta_{r,w}} \quad\text{ with } 
				\theta_{r,w}=\frac{\frac{1}{r}-\frac{1}{\gamma(\alpha_w+\beta_w)}}{\frac{1}{r}-\frac{1}{2}+\frac{1}{n}} \in [0,1]
			\end{equation}
			for $\phi\in\{u,v,\overline u,\overline v,u-\overline u,v-\overline v\}$. Combining \eqref{embedding_Lipschitz}, \eqref{embedding_u_j} and the norm definition in $X(T)$, we obtain the second desired estimate in Lemma \ref{lemma_G-N_Thr2globalexistence}. Hence, the proof of Lemma \ref{lemma_G-N_Thr2globalexistence} is established.
		\end{proof}
		
		Coming back to the proof of Proposition \ref{Thr_2_Globalexistence}, we use Lemma \ref{lemma_G-N_Thr2globalexistence} and Proposition \ref{LinearEstimates} with $(m_1,m_2)=(1,2)$, $(s_1, s_2) =(k,0)$ for $\tau \in [0, t/2]$, and $(m_1,m_2)=(2,2), (s_1,s_2) =(k,0)$ for $\tau \in (t/2,t]$ to obtain the following estimates for $k\in \{0,1\}$:
		\begin{align}
			\| w^{\rm nl}(t,\cdot)\|_{\dot{H}^k} &\lesssim \int_0^{t/2} (1+t-s)^{-\frac{n}{4}-\frac{k}{2}} \||u(s,x)|^{\alpha_w}|v(s,x)|^{\beta_w}\|_{L^1\cap L^2}ds\nonumber\\
			&\qquad + \int_{t/2}^t (1+t-s)^{-\frac{k}{2}} \||u(s,x)|^{\alpha_w}|v(s,x)|^{\beta_w}\|_{L^2}ds\nonumber\\
			&\lesssim  (1+t)^{-\frac{n}{4}-\frac{k}{2}} \|U\|_{X(T)}^{\alpha_w+\beta_w} \int_0^{t/2}(1+s)^{-\frac{n}{2}\tron{\alpha_w+\beta_w-1}}ds\nonumber\\
			&\qquad +  (1+t)^{-\frac{n}{2}\tron{\alpha_w+\beta_w-\frac{1}{2}}} \|U\|_{X(T)}^{\alpha_w+\beta_w} \int_{t/2}^t (1+t-s)^{-\frac{k}{2}}ds \nonumber\\
			&\lesssim (1+t)^{-\frac{n}{4}-\frac{k}{2}} \|U\|_{X(T)}^{\alpha_w+\beta_w}, \label{eq_H_2_Thr2} \\     
			\|\partial_t w^{\rm nl}(t,\cdot)\|_{L^2} &\lesssim \int_0^{t/2} (1+t-s)^{-\frac{n}{4}-1} \||u(s,x)|^{\alpha_w}|v(s,x)|^{\beta_w}\|_{L^1\cap L^2}ds\nonumber\\
			&\qquad + \int_{t/2}^t (1+t-s)^{-1} \||u(s,x)|^{\alpha_w}|v(s,x)|^{\beta_w}\|_{L^2}ds\nonumber\\
			&\lesssim  (1+t)^{-\frac{n}{4}-1} \|U\|_{X(T)}^{\alpha_w+\beta_w} \int_0^{t/2}(1+s)^{-\frac{n}{2}\tron{\alpha_w+\beta_w-1}}ds\nonumber\\
			&\qquad +  (1+t)^{-\frac{n}{2}\tron{\alpha_w+\beta_w-\frac{1}{2}}} \|U\|_{X(T)}^{\alpha_w+\beta_w} \int_{t/2}^t (1+t-s)^{-1}ds \nonumber\\
			&\lesssim (1+t)^{-\frac{n}{4}-1} \|U\|_{X(T)}^{\alpha_w+\beta_w} \label{eq_partial_t_Thr2}
		\end{align}
		for $w\in\{u,v\}$, where we used the assumption \eqref{Thr2globalexistence_Critical_exponent}. For the estimate of $L^r$-norm, we use lemma \ref{lemma_G-N_Thr2globalexistence} and Proposition \ref{LinearEstimates} with $(m_1,m_2)=(1,r)$, $(s_1, s_2) =(0,0)$ for $\tau \in [0, t/2]$, and $(m_1,m_2)=(r,r), (s_1,s_2) =(0,0)$ for $\tau \in (t/2,t]$ to obtain
		\begin{align}
			\| w^{\rm nl}(t,\cdot)\|_{L^r} &\lesssim \int_0^{t/2} (1+t-s)^{-\frac{n}{2}\tron{1-\frac{1}{r}}} \||u(s,x)|^{\alpha_w}|v(s,x)|^{\beta_w}\|_{L^1\cap L^r}ds\nonumber\\
			&\qquad + \int_{t/2}^t\||u(s,x)|^{\alpha_w}|v(s,x)|^{\beta_w}\|_{L^r}ds\nonumber \\
			&\lesssim  (1+t)^{-\frac{n}{2}\tron{1-\frac{1}{r}}} \|U\|_{X(T)}^{\alpha_w+\beta_w} \int_0^{t/2}(1+s)^{-\frac{n}{2}\tron{\alpha_w+\beta_w-1}}ds\nonumber\\
			&\qquad +  (1+t)^{-\frac{n}{2}\tron{\alpha_w+\beta_w-\frac{1}{r}}}\|U\|_{X(T)}^{\alpha_w+\beta_w} \int_{t/2}^t ds \nonumber\\
			&\lesssim (1+t)^{-\frac{n}{2}\tron{1-\frac{1}{r}}} \|U\|_{X(T)}^{\alpha_w+\beta_w}.
			\label{eq_L_r_Thr2}
		\end{align}
		Combining estimates \eqref{eq_H_2_Thr2}, \eqref{eq_partial_t_Thr2} and \eqref{eq_L_r_Thr2} with some estimates in Proposition \ref{LinearEstimates}, we obtain \eqref{eqNu_Thr2}. To indicate \eqref{eqNu-Nv_Thr2}, we apply Proposition \ref{LinearEstimates} with $(m_1,m_2) = (1,2)$, $(s_1, s_2) =(k,0)$ for $\tau \in [0, t/2]$, and $(m_1,m_2)=(2,2), (s_1,s_2) =(k,0)$ for $\tau \in (t/2, t]$ to arrive at
		\begin{align}
			\|w^{\rm nl}(t,\cdot)-\overline w^{\rm nl}(t,\cdot)\|_{\dot{H}^k} &\lesssim \int_{0}^{t/2}(1+t-s)^{-\frac{n}{4}-\frac{k}{2}}\||u(s,\cdot)|^{\alpha_w}|v(s,\cdot)|^{\beta_w}-|\overline u(s,\cdot)|^{\alpha_w}|\overline v(s,\cdot)|^{\beta_w}\|_{L^1\cap L^{2}}ds\nonumber\\
			&\quad+\int_{t/2}^t(1+t-s)^{-\frac{k}{2}}\||u(s,\cdot)|^{\alpha_w}|v(s,\cdot)|^{\beta_w}-|\overline u(s,\cdot)|^{\alpha_w}|\overline v(s,\cdot)|^{\beta_w}\|_{L^{2}}ds\nonumber\\
			&\lesssim (1+t)^{-\frac{n}{4}-\frac{k}{2}}\|U-\overline{U}\|_{X(T)}\tron{\|U\|^{\alpha_w+\beta_w-1}_{X(T)}+\|\overline{U}\|^{\alpha_w+\beta_w-1}_{X(T)}} \label{es_u_v_Lbeta_i}
		\end{align}
		and
		\begin{align}
			\|\partial_tw^{\rm nl}(t,\cdot)-\partial_t\overline w^{\rm nl}(t,\cdot)\|_{L^2} &\lesssim (1+t)^{-\frac{n}{4}-1}\|U-\overline{U}\|_{X(T)}\tron{\|U\|^{\alpha_w+\beta_w-1}_{X(T)}+\|\overline{U}\|^{\alpha_w+\beta_w-1}_{X(T)}} \label{es_u-v_partial_t_i}
		\end{align}
		for $w\in\{u,v\}$ and $k\in \{0,1\}$. Finally, we employ Proposition \ref{LinearEstimates} again with $(m_1,m_2) = (1,r)$, $(s_1, s_2) =(0,0)$ for $\tau \in [0, t/2]$, and $(m_1,m_2)=(r,r), (s_1,s_2) =(0,0)$ for $\tau \in (t/2, t]$ to estimate the $L^r$-norm
		\begin{align}
			\|w^{\rm nl}(t,\cdot)-\overline w^{\rm nl}(t,\cdot)\|_{L^r} &\lesssim \int_{0}^{t/2}(1+t-s)^{-\frac{n}{2}\tron{1-\frac{1}{r}}}\||u(s,\cdot)|^{\alpha_w}|v(s,\cdot)|^{\beta_w}-|\overline u(s,\cdot)|^{\alpha_w}|\overline v(s,\cdot)|^{\beta_w}\|_{L^1\cap L^{r}}ds\nonumber\\
			&\quad+\int_{t/2}^t\||u(s,\cdot)|^{\alpha_w}|v(s,\cdot)|^{\beta_w}-|\overline u(s,\cdot)|^{\alpha_w}|\overline v(s,\cdot)|^{\beta_w}\|_{L^{r}}ds\nonumber\\
			&\lesssim (1+t)^{-\frac{n}{2}\tron{1-\frac{1}{r}}}\|U-\overline{U}\|_{X(T)}\tron{\|U\|^{\alpha_w+\beta_w-1}_{X(T)}+\|\overline{U}\|^{\alpha_w+\beta_w-1}_{X(T)}}. \label{es_u_v_L_r}
		\end{align}
		Linking \eqref{es_u_v_Lbeta_i}, \eqref{es_u-v_partial_t_i} and \eqref{es_u_v_L_r} we may conclude \eqref{eqNu-Nv_Thr2} from the norm definition of $X(T)$. This completes the proof of Proposition \ref{Thr_2_Globalexistence}.
	\end{proof}
	
	\begin{remark}
		{\rm
			From the proof of Proposition \ref{Thr_2_Globalexistence}, one recognizes that if $p,q\geq 1$, then we have a unique global (in time) mild solution to \eqref{Main_system} coming from a application of Banach's fixed point theorem instead. In other words, this case can be considered an illustration to tell us about the essential difference when we want to apply two kinds of fixed point theorems.
		}
	\end{remark}
	
	\begin{proposition}\label{Thr2_Blowup}
		Let $p,q>0$ and $\alpha,\beta\in[1,\infty)$. Assume that $1\leq n\leq 3$, $u_0=v_0=0$ and $u_1,v_1\in L^1\cap H^1$ fulfilling $M_u>0$ and $M_v>0$, where two quantities $M_u$ and $M_v$ are defined as in Theorem \ref{Thr1_Blowup}. If
		\begin{equation}\label{Thr2_Blowup_Criticalexponent}
			\min\{p+\alpha,q+\beta\}<1+2/n,
		\end{equation}
		then there is no global (in time) mild solution to \eqref{Main_system} in the sense of Definition \ref{MildSol.Def}.
	\end{proposition}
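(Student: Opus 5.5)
We follow the iteration scheme from the proof of Theorem \ref{Thr1_Blowup} essentially verbatim and change only the algebraic step where Lemma \ref{lem_matrix} was used. Suppose $U=(u,v)$ is a global mild solution with $u_0=v_0=0$. Since $\mathcal K\ge0$, \eqref{eq_nonlinearterm_non-negative} still holds. Lemma \ref{lem_lowerbound_heatkernel} then gives the starting bounds $u\ge C^0M_ut^{-n/2}$ and $v\ge C^0M_vt^{-n/2}$ for $t\ge T_0$ and $x\in\Gamma^1_{r_0,t}$. Neither Lemma \ref{es_K} nor the domain bookkeeping ($r_k$, $T_k$, $\delta_k$) in Steps 1 and $k$ uses $\alpha,\beta<1$; they only use the nonnegativity of the exponents. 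So we obtain the same recursive lower bounds \eqref{Relation_a_k-b_k}, with $A_{k+1}=\min\{A_k,PA_k-\boldsymbol 1\}$, $A_0=(n/2,n/2)^\top$, and the same recursions for $c_k,d_k$.

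The key step is to show that $\{A_k\}$ is unbounded below. We replace Lemma \ref{lem_matrix} by the following claim: if $\min\{p+\alpha,q+\beta\}<1+2/n$, there is no $\Theta\le (n/2,n/2)^\top$ with $L\Theta\ge\boldsymbol 1$, equivalently with $P\Theta\ge \Theta+\boldsymbol 1$. Say $p+\alpha<1+2/n$ (the other case is symmetric). If such a $\Theta$ existed, then $\theta_1\le \alpha\theta_1+p\theta_2-1$. We split into two cases.
\begin{itemize}
\item If $\theta_1\ge\theta_2$, then $\alpha\theta_1+p\theta_2\le (\alpha+p)\theta_1$. This gives $(p+\alpha-1)\theta_1\ge1$, and since $\theta_1\le n/2$ also $p+\alpha-1\ge 2/n$, a contradiction.
\item If $\theta_2>\theta_1$, the first inequality alone is not decisive, so we use the second one: $\theta_2\le q\theta_1+\beta\theta_2-1$. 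Because $\beta\ge1$ this is less restrictive, so we need a different argument. For this case I would reorganize the iteration instead. Since $\alpha\ge1$, the map $a\mapsto \alpha a+pb-1$ is increasing in $a$, and the iteration started at $A_0$ with equal entries stays controlled by $a_k$.
\end{itemize}
More concretely, I would compare the coupled iteration with the scalar one $m_{k+1}=\min\{m_k,(p+\alpha)m_k-1\}$ for $m_k:=\max\{a_k,b_k\}$. From $a_{k+1}\le \alpha a_k+pb_k-1\le (p+\alpha)m_k-1$, the $a$-component is driven to $-\infty$ once $(p+\alpha)n/2-1<n/2$, which holds exactly under \eqref{Thr2_Blowup_Criticalcurve}-type condition $p+\alpha<1+2/n$. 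After that, $b_{k+1}\le qa_k+\beta b_k-1$ with $q>0$ forces $b_k\to-\infty$ as well. The subtle point is that $m_k$ need not decrease while $b_k$ is the maximum. I expect this to be the main obstacle. I would handle it by proving directly that the fixed-point relation $A_\infty\le PA_\infty-\boldsymbol1$ with $A_\infty\le A_0$ is impossible, using the $\alpha\ge1$ monotonicity to show that $a_\infty\le n/2$ and $a_\infty\le(\alpha+p)\max\{a_\infty,b_\infty\}-1$, and then bounding $b_\infty$ through the second equation. If $b_\infty>a_\infty$ remains possible, I would start the iteration only from the first equation: put $b_k$ at its a priori bound and iterate the scalar inequality $u\gtrsim t^{1-\alpha a-pn/2}$ while $v$ stays $\gtrsim t^{-n/2}$.

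Once $a_k,b_k\to-\infty$ at least geometrically (rate $\rho_2>1$, which requires $\min\{p+\alpha,q+\beta\}>1$ and holds since $\alpha,\beta\ge1$ and $p,q>0$), the remainder is identical to the proof of Theorem \ref{Thr1_Blowup}. We get the matrices $M_k$, the bound $E_k\le C(k+1)\boldsymbol 1$, and the comparison $H_k\le QR_k$, giving \eqref{final_es_blowup}. Summability of $\delta_k$ then yields $r_k\ge r_0/2$ and $T_k\le T_\infty$. Choosing $t^*>T_\infty$ with $\log t^*>Q$ gives \eqref{X,Y_to_infty}, and \eqref{lowerbound_L2_norm} then contradicts $U\in(\mathcal C([0,\infty);L^2))^2$.
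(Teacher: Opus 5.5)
\textbf{Verdict: genuine gap.} Your frame matches the paper's: reuse the iteration from Theorem \ref{Thr1_Blowup} unchanged and replace only the step showing that $\{A_k\}$ is unbounded below. But that step is the entire new content of the proposition, and you leave it open. You call the case $\theta_2>\theta_1$ ``not decisive'', label it the main obstacle, and offer only conditional remedies.

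The claim that the first inequality alone is not decisive is also wrong. Rewrite $\theta_1\le\alpha\theta_1+p\theta_2-1$ as $(\alpha-1)\theta_1+p\theta_2\ge 1$. Since $\alpha-1\ge0$ and $p>0$, for any $\Theta\le(n/2,n/2)^\top$ you get
\[
1\le(\alpha-1)\theta_1+p\theta_2\le(p+\alpha-1)\max\{\theta_1,\theta_2\}\le(p+\alpha-1)\tfrac n2<1 .
\]
This is a contradiction with no case distinction. You lost it in two places:
\begin{itemize}
\item in case 2, by keeping $\alpha\theta_1$ on the right and $\theta_1$ on the left;
\item in the $m_k$ comparison, by comparing $a_{k+1}$ with $m_k$ rather than with $a_k$. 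Only the decrement $a_{k+1}-a_k$ is controlled uniformly.
\end{itemize}

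The paper makes this direct and avoids limits altogether. Since $a_{k+1}=\min\{a_k,\cdot\}$ and $b_{k+1}=\min\{b_k,\cdot\}$, both sequences are nonincreasing, so $a_k\le n/2$ and $b_k\le n/2$. Hence
\[
a_{k+1}-a_k\le(\alpha-1)a_k+pb_k-1\le(\alpha-1)\tfrac n2+p\tfrac n2-1=:\mu<0 ,
\]
which gives $a_k\le\frac n2+k\mu\to-\infty$. Then $b_{k+1}\le qa_k+\beta b_0-1\to-\infty$, because $q>0$ and $\beta\ge 1>0$.

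Your fallback of freezing $b_k$ at its a priori bound $n/2$ is exactly this argument. It works only through $(\alpha-1)a_k\le(\alpha-1)\frac n2$, which uses $\alpha\ge1$, and you did not carry it out. With this line inserted, the rest of your proposal goes through as you describe: the geometric growth of $R_k$, the bounds $E_k\le C(k+1)\mathbf{1}$ and $H_k\le QR_k$, $r_k\ge r_0/2$, $T_k\le T_\infty$, and the $L^2$ contradiction at $t^*$.
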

	
	\begin{proof}
		We use the notation introduced in the proof of Theorem \ref{Thr1_Blowup}.
		It remains to show that both $a_k\to-\infty$ and $b_k\to-\infty$ as $k\to\infty$. Indeed, it is obvious to see that the sequences $\{a_k\}_{k\in \mathbb N}$ and $\{b_k\}_{k\in \mathbb N}$ are nonincreasing. Without loss of generality, we assume that $\mu:=n(p+\alpha-1)/2-1<0$ due to the assumption \eqref{Thr2_Blowup_Criticalexponent}. Hence, it holds
		\[
		a_{k+1}-a_k\leq (\alpha-1)a_k+pb_k-1\leq(\alpha-1)a_0+pb_0-1=\mu<0,
		\]
		which gives $a_{k}\leq a_0+k\mu$. Therefore, we conclude that $a_k\to -\infty$ as $k\to\infty$. Moreover, one observes
		$$ b_{k+1}\leq qa_k+\beta b_k-1\leq qa_k+\beta b_0-1 $$
		to arrive at $b_k\to -\infty$ as $k\to\infty$. Repeating the corresponding steps in the proof of Theorem \ref{Thr1_Blowup} we may claim the desired result.
	\end{proof}
	
	\begin{remark}
		{\rm
			From the conditions \eqref{Thr2globalexistence_Critical_exponent} and \eqref{Thr2_Blowup_Criticalexponent} in Propositions \ref{Thr_2_Globalexistence} and \ref{Thr2_Blowup}, respectively, one recognizes that the sharp threshold to separate existence and nonexistence of global mild solutions is given by 
			\begin{equation*}\label{Thr2_Criticalexponent}
				\min\{p+\alpha,q+\beta\}= 1+2/n.
			\end{equation*}
			In a comparison with \eqref{crit_curve_sublinear}, we can see that the critical threshold of the system \eqref{Main_system} shifts from the so-called critical curve, when $\alpha,\beta \in [0,1)$, to the so-called critical exponent, when $\alpha,\beta \in [1,\infty)$. This means that the former case is of a system-like threshold as \eqref{critical_curve_weakly_coupled}, meanwhile the latter case is of a single equation-like threshold only as the Fujita exponent $p_{\mathrm{Fuj}}$. From this observation, we understood the remarkable influence of parameters $\alpha,\beta$ on determining critical thresholds of \eqref{Main_system}.
		}
	\end{remark}
	
	\begin{remark}[\textbf{Open problems}]
		{\rm
			Let us discuss some of further interesting problems as follows:
			\begin{itemize}[leftmargin=*]
				\item[$\bullet$] The first one is to consider the fractional Laplacian $(-\Delta)^{\sigma}$ for any $\sigma>1$ instead. In this setting, the loss of the finite propagation speed makes dificult in terms of both the compactness argument in the proof of global existence and the localized iteration in the proof of a blow-up result.
				\item[$\bullet$] Another interesting problem arises when $\alpha$ and $\beta$ belong to different range, for example, $\alpha\in[0,1)$ and $\beta\in[1,\infty)$. Then, one nonlinear term is non-Lipschitz, whereas the other is locally Lipschitz. This means that it seems more delicate to determine the critical threshold since we do not know whether the nonlinearity term $|u|^{\alpha}|v|^{p}$ or $|u|^{q}|v|^{\beta}$ dominates the other.
				\item[$\bullet$] We would like to emphasize that the critical cases 
				\[
				\frac{1+\max\{q-\alpha,p-\beta\}}{pq-(\alpha-1)(\beta-1)}=\frac{n}{2} \quad\text{ or }\quad \min\{p+\alpha,q+\beta\}=1+2/n
				\]
				when $\alpha,\beta\in (0,1)$ or $\alpha,\beta\in [1,\infty)$, respectively, remain open since the present approach of power-type iteration may reach a fixed point and no longer improve the lower bounds. Probably, a logarithmic iteration associated with weighted lower estimates, or a refined test-function method may be required to study this problem.
			\end{itemize}
		}
	\end{remark}

	\section*{Acknowledgments}
	This research is funded by Vietnam National Foundation for Science and Technology Development (NAFOSTED) under grant number 101.02-2025.23. The authors would like to thank M.Sc. Trinh Dang Duong for helpful discussions during the preparation of this paper.
	
	\appendix 
	\section{Auxiliary lemmas}
	\begin{lemma}\label{Lem_Lipschitz}
		Let $w\in\{u,v\}$ and $\alpha_w,\beta_w\geq1$. Then, it holds
		\begin{align*}
			\big||u|^{\alpha_w}|v|^{\beta_w}-|\overline{u}|^{\alpha_w}|\overline{v}|^{\beta_w}\big|\lesssim |u-\overline{u}|\big(|u|^{\alpha_w-1}+|\overline{u}|^{\alpha_w-1}\big)|v|^{\beta_w}+|v-\overline{v}|\big(|v|^{\beta_w-1}+|\overline{v}|^{\beta_w-1}\big)|\overline{u}|^{\alpha_w}.
		\end{align*}
	\end{lemma}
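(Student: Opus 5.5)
The plan is to telescope the difference through the mixed term $|\overline{u}|^{\alpha_w}|v|^{\beta_w}$ and then apply a one-variable mean value estimate to each factor. Concretely, write
\[
|u|^{\alpha_w}|v|^{\beta_w}-|\overline{u}|^{\alpha_w}|\overline{v}|^{\beta_w}
=\big(|u|^{\alpha_w}-|\overline{u}|^{\alpha_w}\big)|v|^{\beta_w}
+|\overline{u}|^{\alpha_w}\big(|v|^{\beta_w}-|\overline{v}|^{\beta_w}\big).
\]
By the triangle inequality, it then suffices to bound each bracket separately.

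The key scalar estimate is the following. For $\gamma\geq1$ and $a,b\in\mathbb{R}$,
\[
\big||a|^{\gamma}-|b|^{\gamma}\big|\leq \gamma\,\big||a|-|b|\big|\,\max\{|a|,|b|\}^{\gamma-1}\leq \gamma\,|a-b|\big(|a|^{\gamma-1}+|b|^{\gamma-1}\big).
\]
The first inequality follows from the mean value theorem applied to $s\mapsto s^{\gamma}$ on $[0,\infty)$. This function is $C^1$ there precisely because $\gamma\geq1$; for $\gamma=1$ the estimate is just the reverse triangle inequality. The second inequality uses $\big||a|-|b|\big|\leq|a-b|$ together with $\max\{|a|,|b|\}^{\gamma-1}\leq |a|^{\gamma-1}+|b|^{\gamma-1}$, which holds since $\gamma-1\geq0$.

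We apply this with $\gamma=\alpha_w$, $(a,b)=(u,\overline u)$ to the first bracket, and with $\gamma=\beta_w$, $(a,b)=(v,\overline v)$ to the second. This yields exactly the two terms on the right-hand side of the claim, with implicit constant $\max\{\alpha_w,\beta_w\}$.

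The only point needing care is the hypothesis $\alpha_w,\beta_w\geq1$. It is used in the $C^1$ regularity of $s^\gamma$ at $s=0$ and in the nonnegativity of the exponent $\gamma-1$. Under it the argument is entirely elementary, so I do not expect any step to be an obstacle.
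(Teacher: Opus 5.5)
Your proof is correct and follows the paper's argument exactly: the same telescoping through the mixed term $|\overline{u}|^{\alpha_w}|v|^{\beta_w}$, then the mean value theorem for $s\mapsto s^{\gamma}$ applied to each bracket. Your version is slightly more explicit than the paper's, since it identifies the constant $\max\{\alpha_w,\beta_w\}$ and shows where $\gamma\geq 1$ is used, namely in $\xi^{\gamma-1}\leq\max\{|a|,|b|\}^{\gamma-1}$ and in the degenerate case $\gamma=1$.
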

	\begin{proof}
		It is clear to see that
		\begin{equation*}\label{eq_1_Lem_Lipschitz}
			|u|^{\alpha_w}|v|^{\beta_w}-|\overline{u}|^{\alpha_w}|\overline{v}|^{\beta_w}=\left(|u|^{\alpha_w}-|\overline{u}|^{\alpha_w}\right)|v|^{\beta_w}+|\overline{u}|^{\alpha_w}\big(|v|^{\beta_w}-|\overline{v}|^{\beta_w}\big).
		\end{equation*}
		By the mean value theorem, we obtain
		\begin{align*}
			\big||u|^{\alpha_w}-|\overline{u}|^{\alpha_w}\big|\lesssim |u-\overline{u}|\left(|u|^{\alpha_w-1}+|\overline{u}|^{\alpha_w-1}\right),\quad\big||v|^{\beta_w}-|\overline{v}|^{\beta_w}\big|\lesssim |v-\overline{v}|\big(|v|^{\beta_w-1}+|\overline{v}|^{\beta_w-1}\big).
		\end{align*}
		Hence, combining these above inequalities we deduce the desired estimate.
	\end{proof}
	\begin{lemma}\label{regularity_K}
		Let $T>0$ and $g\in L^1([0,T];L^2)$. We define
		\[
		v(t,\cdot):=\int_0^t\mathcal K(t-s,\cdot)\ast g(s,\cdot)\,ds,
		\qquad t\in[0,T].
		\]
		Then, $v\in \mathcal{C}([0,T];H^1)\cap \mathcal{C}^1([0,T];L^2)$ and $v$ is a weak solution to 
		\[
		\left\{
		\begin{aligned}
			&v_{tt}-\Delta v+ v_t= g,&&x\in \mathbb R^n,t\in (0,T),\\
			&v(0,x)=0, \quad v_t(0,x)=0,&&x\in \mathbb R^n.
		\end{aligned}
		\right.
		\]
		Furthermore, for any
		$\Phi\in \mathcal{C}^\infty_{\rm c}([0,T)\times\mathbb R^n)$ it holds
		\begin{equation}\label{duhamel_distribution_identity}
			\int_0^T\int_{\mathbb R^n}\Bigl(-\partial_tv\,\partial_t\Phi+\nabla v\cdot\nabla\Phi+\partial_tv\,\Phi- g\,\Phi\Bigr)\,dx\,dt=0
		\end{equation}
		and we have the following estimate:
		\begin{equation}\label{Energy_Est}
			\sup_{0\leq t\leq T}\big\{\|\partial_tv(t,\cdot)\|_{L^2}+\|\nabla v(t,\cdot)\|_{L^2}\big\}\lesssim \|g\|_{L^1([0,T];L^2)}.
		\end{equation}
	\end{lemma}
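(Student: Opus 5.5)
The plan is to treat the Duhamel term as a superposition of solutions of the homogeneous damped wave equation, and to get everything from the energy estimate for that equation. For a.e. $s\in[0,T]$ we have $g(s)\in L^2$. Let $S(\theta)f:=\mathcal K(\theta)\ast f$; this is the solution at time $\theta$ of \eqref{Linear_damped_wave} with data $(0,f)$.

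\textbf{Step 1: the energy bound for $S$.} On the Fourier side, Plancherel and the explicit formula for $\widehat{\mathcal K}$ should give
\[
\|\nabla S(\theta)f\|_{L^2}+\|\partial_\theta S(\theta)f\|_{L^2}\lesssim \|f\|_{L^2},
\qquad
\|S(\theta)f\|_{L^2}\le C_T\|f\|_{L^2},\quad \theta\in[0,T].
\]
Equivalently, this follows from the energy identity for the homogeneous equation. We also need continuity of $\theta\mapsto S(\theta)f$ in $H^1$ and of $\theta\mapsto \partial_\theta S(\theta)f$ in $L^2$. Both follow by dominated convergence in Fourier variables, since the multipliers are continuous in $\theta$ and uniformly bounded on $[0,T]$.

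\textbf{Step 2: regularity of $v$.} Write $v(t)=\int_0^t S(t-s)g(s)\,ds$. This is a Bochner integral in $H^1$, because $\|S(t-s)g(s)\|_{H^1}\le C_T\|g(s)\|_{L^2}\in L^1(0,T)$. Differentiating under the integral, using $S(0)=0$, gives $\partial_t v(t)=\int_0^t \partial_\theta S(t-s)g(s)\,ds$. This identity holds first for $g\in\mathcal C([0,T];L^2)$. It then extends by density in $L^1([0,T];L^2)$, using the uniform bounds of Step 1.

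Continuity of $v$ in $H^1$ and of $\partial_t v$ in $L^2$ is proved exactly as the $L^r$-continuity argument in Step 4 of the previous subsection. Split the increment $v(t)-v(\tau)$ into two pieces:
\begin{itemize}
\item the part $\int_0^\tau \bigl(S(t-s)-S(\tau-s)\bigr)g(s)\,ds$, which tends to $0$ by dominated convergence using the strong continuity of Step 1;
\item the part $\int_\tau^t S(t-s)g(s)\,ds$, which is bounded by $C_T\int_\tau^t\|g\|_{L^2}\,ds\to0$.
\end{itemize}
Taking the supremum in $t$ of the bounds of Step 1 gives \eqref{Energy_Est} directly. The initial conditions $v(0)=0$ and $\partial_tv(0)=0$ are immediate.

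\textbf{Step 3: the weak formulation \eqref{duhamel_distribution_identity}.} This is the step I expect to be the main obstacle, since $\partial_t^2 v$ is not a function and $g$ has only $L^1$ regularity in time. I would argue by approximation.
\begin{itemize}
\item First take $g\in\mathcal C^\infty_{\rm c}((0,T)\times\mathbb R^n)$ (or $g\in \mathcal C^1([0,T];H^1)$). Then $v$ is a classical solution, as can be checked on the Fourier side where $\hat v$ solves an ODE with source $\hat g$. Multiplying by $\Phi$ and integrating by parts in $t$ and $x$ yields the identity. The boundary term at $t=0$ vanishes because $\partial_t v(0)=0$, and there is none at $T$ because $\Phi$ has compact support in $[0,T)$.
\item For general $g$, choose $g_m\to g$ in $L^1([0,T];L^2)$. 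By \eqref{Energy_Est}, $\nabla v_m\to\nabla v$ and $\partial_tv_m\to\partial_tv$ in $L^\infty([0,T];L^2)$. Every term of the identity is continuous with respect to these convergences, because $\Phi$, $\nabla\Phi$ and $\partial_t\Phi$ lie in $L^\infty([0,T];L^2)$.
\end{itemize}
Passing to the limit gives \eqref{duhamel_distribution_identity}. Hence $v$ is a weak solution with the stated data.
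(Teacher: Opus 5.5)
Your proposal is correct and follows essentially the same route as the paper. Both arguments use Plancherel-based uniform bounds and strong continuity for $\mathcal K(\theta)\ast\cdot$, continuity of the Bochner integral via the two-piece splitting, and differentiation under the integral sign; the weak identity \eqref{duhamel_distribution_identity} is then obtained by approximating $g$ with smooth compactly supported sources and passing to the limit with \eqref{Energy_Est}. The only minor difference is that you justify $\partial_t v(t)=\int_0^t\partial_t\mathcal K(t-s)\ast g(s)\,ds$ first for continuous $g$ and then by density, whereas the paper handles the difference quotients directly for $g\in L^1([0,T];L^2)$, using $\|\mathcal K(\theta)\ast h\|_{L^2}\le\theta\|h\|_{L^2}$ to control the boundary piece.
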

	\begin{proof}
		For $h\in L^2$, set $w(t,\cdot)=\mathcal K(t,\cdot)\ast h(\cdot)$ and $\partial_tw(t,\cdot)=\partial_t\mathcal K(t,\cdot)\ast h(\cdot)$. By the definition of $\mathcal K(t,\cdot)$ and the formula of Parseval-Plancherel, it holds
		\begin{equation}\label{uniformbound_H_1_and_t}
			\sup_{0\leq t\leq T}\big\{\|\partial_tw(t,\cdot)\|_{L^2}+\|\nabla w(t,\cdot)\|_{L^2}\big\}\lesssim \|h(\cdot)\|_{L^2}.
		\end{equation}
		Since $w(0,\cdot)=0$, we get $\|w(t,\cdot)\|_{L^2}\leq t\|h(\cdot)\|_{L^2}$. Thus, for any $T>0$ one has
		\begin{equation}\label{regularity_K_eq1}
			\sup_{0\leq t\leq T}\big\{\|w(t,\cdot)\|_{H^1}+\|\partial_tw(t,\cdot)\|_{L^2}
			\big\}\leq C(T)\|h(\cdot)\|_{L^2}.
		\end{equation}
		Moreover, the Fourier multipliers of $\mathcal{K}(t,\cdot)$ and $\partial_t K(t,\cdot)$ are continuous in $t$. So, we conclude that
		\begin{equation}\label{regularity_K_eq2}
			w\in \mathcal{C}([0,T];H^1) \quad \text{ and }\quad
			\partial_tw \in \mathcal{C}([0,T];L^2).
		\end{equation}
		We now write
		\[
		v(t,\cdot)=\int_0^t\mathcal{K}(t-s,\cdot)\ast g(s,\cdot)\,ds.
		\]
		In the view of \eqref{regularity_K_eq1}, this Bochner integral is well-defined and we may conclude the estimate \eqref{Energy_Est}. Let us next prove \eqref{duhamel_distribution_identity}. By \eqref{regularity_K_eq1}, we also have
		\[
		\|v(t,\cdot)\|_{H^1}
		\leq C(T)\int_0^t \|g(s,\cdot)\|_{L^2}\,ds.
		\]
		We next prove the continuity of $v$. Let $\tau>0$ be such that $t+\tau\leq T$. Then, it holds
		\[
		\begin{aligned}
			v(t+\tau,\cdot)-v(t,\cdot) &=\int_0^t
			\bigl(\mathcal{K}(t+\tau-s,\cdot)-\mathcal{K}(t-s,\cdot)\bigr)\ast g(s,\cdot)\,ds \\
			&\qquad +\int_t^{t+\tau}\mathcal{K}(t+\tau-s,\cdot)\ast g(s,\cdot)\,ds.
		\end{aligned}
		\]
		For a.e $s\in(0,t)$, the first integrand converges to zero in $H^1$ as $\tau\to0$, by
		\eqref{regularity_K_eq2}. Moreover, by
		\eqref{regularity_K_eq1}, it is bounded by $2C(T)\|g(s,\cdot)\|_{L^2}$, which belongs to $L^1(0,t)$. Hence, the first integral tends to zero by the dominated convergence theorem. The second integral satisfies
		\[
		\left\|\int_t^{t+\tau}\mathcal{K}(t+\tau-s,\cdot)\ast g(s,\cdot)\,ds\right\|_{H^1}\leq C(T)\int_t^{t+\tau}\| g(s,\cdot)\|_{L^2}\,ds,
		\]
		which tends to zero as $\tau \to 0$. The case $\tau<0$ is analogous. Therefore, $v\in \mathcal{C}([0,T];H^1)$. Taking the time-derivative of the Bochner convolution gives
		\begin{equation}\label{regularity_K_eq4}
			\partial_tv(t,\cdot)=\int_0^t\partial_t \mathcal{K}(t-s,\cdot)\ast g(s,\cdot)\,ds.
		\end{equation}
		Indeed, we have the following convergence for the first term 
		\[
		\int_0^t
		\frac{\bigl(\mathcal{K}(t+\tau-s,\cdot)-\mathcal{K}(t-s,\cdot)\bigr)\ast g(s,\cdot)}{\tau}\,ds\to \int_0^t\partial_t \mathcal{K}(t-s,\cdot)\ast g(s,\cdot)\,ds\text{ in }L^2
		\]
		as $\tau\to 0$ due to the dominated convergence theorem while the second term vanishes because
		\[
		\left\|\frac{1}{\tau}\int_t^{t+\tau}\mathcal{K}(t+\tau-s,\cdot)\ast g(s,\cdot)\,ds \right\|_{L^2}\leq \int_t^{t+\tau}\|g(s,\cdot)\|_{L^2}\,ds\to 0
		\]
		as $\tau\to 0$. Finally, by using \eqref{regularity_K_eq4}, for $\tau>0$ we get
		\[
		\begin{aligned}
			\partial_tv(t+\tau,\cdot)-\partial_tv(t,\cdot)&=\int_0^t
			\bigl(\partial_t \mathcal{K}(t+\tau-s,\cdot)-\partial_t\mathcal{K}(t-s,\cdot)\bigr)\ast g(s,\cdot)\,ds\\
			&\qquad +\int_t^{t+\tau}\partial_t \mathcal{K}(t+\tau-s,\cdot)\ast g(s,\cdot)\,ds.
		\end{aligned}
		\]
		The first integral tends to zero in $L^2$ by \eqref{regularity_K_eq1}, \eqref{regularity_K_eq2}, and the dominated convergence theorem, whereas
		\[
		\left\|\int_t^{t+\tau}\partial_t\mathcal{K}(t+h-s,\cdot)g(s,\cdot)\,ds\right\|_{L^2}\leq C(T)\int_t^{t+\tau}\| g(s,\cdot)\|_{L^2}\,ds\to 0.
		\]
		Thus, $\partial_tv\in \mathcal{C}([0,T];L^2)$. In particular, the formulas above give $v(0,\cdot)=0, \partial_tv(0,\cdot)=0$. It remains to verify the distributional identity. After choosing the function $g_\ell\in \mathcal{C}^\infty_{\rm c}((0,T)\times\mathbb R^n)$ such that $g_\ell\to g$ in $L^1([0,T];L^2)$ as $\ell\to\ity$, we define
		\[
		v_\ell(t,\cdot):=\int_0^t\mathcal{K}(t-s,\cdot)g_\ell(s,\cdot)\,ds.
		\]
		For any $\ell\in\mathbb N$, $v_\ell\in \mathcal{C}([0,T],H^2)\cap \mathcal{C}^1([0,T],H^1)\cap \mathcal{C}^2([0,T],L^2)$ is a strong solution to
		\[
		\left\{
		\begin{aligned}
			&\partial_t^2v_\ell-\Delta v_\ell+\partial_tv_\ell=g_\ell,&&x\in \mathbb{R}^n,t>0,\\
			&v_\ell(0,\cdot)=0,\quad \partial_tv_\ell(0,\cdot)=0,&&x\in \mathbb{R}^n.
		\end{aligned}
		\right.
		\]
		Hence, for any $\Phi\in \mathcal{C}^\infty_{\rm c}([0,T)\times\mathbb R^n)$, the integral-by-part fomurla implies that
		\begin{equation}\label{weak-identity}
			\int_0^T\int_{\mathbb R^n}\Bigl(-\partial_tv_\ell\,\partial_t\Phi+\nabla v_\ell\cdot\nabla\Phi+\partial_tv_\ell\,\Phi-g_\ell\Phi\Bigr)\,dx\,dt=0.
		\end{equation}
		By \eqref{regularity_K_eq1} and \eqref{regularity_K_eq4}, we arrive at
		\[
		\sup_{0\leq t\leq T}\Bigl(\|v_\ell(t,\cdot)-v(t,\cdot)\|_{H^1}+\|\partial_tv_\ell(t,\cdot)-\partial_tv(t,\cdot)\|_{L^2} \Bigr)\leq C(T)\|g_\ell- g\|_{L^1([0,T];L^2)} \to 0
		\]
		as $\ell\to\ity$. Passing to the limit in the preceding weak identity \eqref{weak-identity} yields \eqref{duhamel_distribution_identity}. Therefore, $v$ is a distributional solution with zero initial data. This completes the proof.
	\end{proof}
	\begin{lemma}[see \cite{Hajaiej2011}]  \label{fractionalGagliardoNirenberg}
		Let $1<r,\,r_0,\,r_1<\infty$, $a >0$ and $\sigma\in [0,a)$. Then, it holds
		$$ \|u\|_{\dot{H}^{\sigma}_r} \lesssim \|u\|_{L^{r_0}}^{1-\omega(\sigma,a)}\, \|u\|_{\dot{H}^{a}_{r_1}}^{\omega(\sigma,a)}, \text{ where }\omega(\sigma,a) := \dfrac{\frac{1}{r_0}-\frac{1}{r}+\frac{\sigma}{n}}{\frac{1}{r_0}-\frac{1}{r_1}+\frac{a}{n}} \in [\sigma/a,1]. $$
	\end{lemma}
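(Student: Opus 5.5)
The statement to prove is the final one, Lemma \ref{fractionalGagliardoNirenberg}: the fractional Gagliardo--Nirenberg inequality $\|u\|_{\dot H^\sigma_r}\lesssim \|u\|_{L^{r_0}}^{1-\omega}\|u\|_{\dot H^a_{r_1}}^{\omega}$ with $\omega=\omega(\sigma,a)\in[\sigma/a,1]$. I would prove it by combining complex interpolation with Sobolev embedding, the route taken in \cite{Hajaiej2011}.

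\textbf{Step 1 (interpolation).} For $\theta\in(0,1)$, the complex interpolation space between $L^{r_0}=\dot H^0_{r_0}$ and $\dot H^a_{r_1}$ is $\dot H^{\theta a}_{p_\theta}$, where $1/p_\theta=(1-\theta)/r_0+\theta/r_1$. This identification is valid for homogeneous Triebel--Lizorkin spaces $\dot F^{s}_{p,2}$ with $1<p<\infty$. It gives
\[
\|u\|_{\dot H^{\theta a}_{p_\theta}}\lesssim \|u\|_{L^{r_0}}^{1-\theta}\|u\|_{\dot H^a_{r_1}}^{\theta}.
\]

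\textbf{Step 2 (choice of $\theta$ and Sobolev embedding).} Take $\theta=\omega$. Unwinding the definition of $\omega$ yields the scaling identity
\[
\frac{1}{p_\omega}-\frac{\omega a}{n}=\frac1r-\frac{\sigma}{n}.
\]
The constraint $\omega\ge\sigma/a$ gives $\omega a\ge\sigma$, so the Sobolev embedding $\dot H^{\omega a}_{p_\omega}\hookrightarrow\dot H^{\sigma}_{r}$ holds. This embedding follows from the Hardy--Littlewood--Sobolev inequality applied to the Riesz potential $|\nabla|^{-(\omega a-\sigma)}$.
\begin{itemize}
\item If $\omega a=\sigma$, then $p_\omega=r$ and the embedding is trivial.
\item Otherwise $p_\omega<r$, since the scaling identity forces $1/p_\omega-1/r=(\omega a-\sigma)/n>0$.
\end{itemize}
One must also check that $1<p_\omega<\infty$. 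This holds automatically, because $1/p_\omega$ is a convex combination of $1/r_0$ and $1/r_1$, both of which lie in $(0,1)$.

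\textbf{Step 3 (endpoints and density).} At $\omega=1$ we have $p_\omega=r_1$, and the argument reduces to pure Sobolev embedding from $\dot H^a_{r_1}$. The density class must be handled by working with Schwartz functions whose Fourier transforms vanish near the origin, and then extending by density.

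\textbf{Main obstacle.} I expect the hardest step to be the rigorous justification of the homogeneous complex interpolation identity, since homogeneous spaces are defined modulo polynomials. If that becomes delicate, my fallback is a direct Littlewood--Paley proof.
\begin{itemize}
\item Split $u$ into low and high frequencies at a threshold $N$.
\item Bound the low-frequency part by Bernstein's inequality in terms of $\|u\|_{L^{r_0}}$.
\item Bound the high-frequency part in terms of $\|u\|_{\dot H^a_{r_1}}$.
\item Optimize over $N$.
\end{itemize}
This fallback gives the inequality directly in weak-type form, and a real-interpolation refinement then recovers the strong form.
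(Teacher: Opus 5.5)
Your interpolation-plus-Sobolev argument is correct: the complex interpolation $[L^{r_0},\dot H^{a}_{r_1}]_\omega=\dot H^{\omega a}_{p_\omega}$, followed by the embedding $\dot H^{\omega a}_{p_\omega}\hookrightarrow\dot H^{\sigma}_{r}$ (justified by your identity $1/p_\omega-\omega a/n=1/r-\sigma/n$ and by $\omega a\ge\sigma$), is the standard sufficiency proof of this inequality as in \cite{Hajaiej2011}, which the paper only cites without giving a proof of its own. You should drop the Littlewood--Paley fallback, which is unnecessary and fails in this generality: Bernstein's inequality from $L^{r_0}$ to $L^{r}$ needs $r_0\le r$, while the hypotheses allow $r<r_0$ (for instance $n=3$, $\sigma=0$, $a=1$, $r_1=3/2$, $r_0=4$, $r=7/2$, which gives $\omega=3/7$).
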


\end{document}